\documentclass{amsart}[11pt]
\usepackage[margin=1.0in]{geometry}

\usepackage{inputenc}
\usepackage{amsmath}
\usepackage{amssymb, amsfonts}
\usepackage[mathscr]{eucal}
\usepackage{graphicx,epstopdf,bm}
\usepackage{amsaddr}
\usepackage{wrapfig}
\usepackage{subfig}
\usepackage{placeins}
\usepackage{enumitem,kantlipsum}

\usepackage{dcolumn}%
\usepackage[title]{appendix}
\usepackage[colorlinks,bookmarks,urlcolor=blue,citecolor=blue,linkcolor=blue]{hyperref}
\usepackage[capitalize]{cleveref}
\captionsetup[subfigure]{labelfont=rm}


\usepackage{bbm}
\usepackage{mathtools}
\usepackage{multirow}
\usepackage{xcolor}

\usepackage{algorithm}
\usepackage{algpseudocode}

\usepackage{amsthm}
\numberwithin{equation}{section}
\theoremstyle{plain}
\newtheorem{theorem}{Theorem}[section]
\newtheorem{definition}{Definition}[section]

\newtheorem{lemma}[theorem]{Lemma}
\newtheorem{assumption}[theorem]{Assumption}

\newtheorem{remark}[theorem]{Remark}
\allowdisplaybreaks

\pagestyle{plain}



\usepackage{bbm}

\renewcommand{\vec}[1]{\boldsymbol{#1}}

\newcommand{\paren}[1]{\left(#1\right)}
\newcommand{\brac}[1]{\left[#1\right]}

\newcommand{\lap}[1]{\Delta#1}

\newcommand{\abs}[1]{\left|#1\right|}
\newcommand{\norm}[1]{\Vert#1\Vert}

\newcommand{\DA}{D^{\textrm{A}}}
\newcommand{\DB}{D^{\textrm{B}}}
\newcommand{\DC}{D^{\textrm{C}}}


\newcommand{\vx}{\vec{x}}
\newcommand{\vy}{\vec{y}}

\newcommand{\vQ}{\vec{Q}}


\def\R{\mathbb{R}}

\newcommand{\Kd}{K_{\textrm{d}}}




\newcommand{\la}{\left \langle}
\newcommand{\ra}{\right\rangle}


\newcommand{\X}{\mathbb{X}}
\newcommand{\Y}{\mathbb{Y}}
\newcommand{\vecrho}{\vec{\rho}}

\newcommand{\RN}[1]{
  \textup{\uppercase\expandafter{\romannumeral#1}}%
}


\newcommand{\commentab}[1]{{\color{black}{#1}}}

\makeatletter
\def\mathcolor#1#{\@mathcolor{#1}}
\def\@mathcolor#1#2#3{%
  \protect\leavevmode
  \begingroup
    \color#1{#2}#3%
  \endgroup
}
\makeatother

\begin{document}
\title{How reaction-diffusion PDEs approximate the large-population limit of stochastic particle models.}

\author{S. Isaacson, J. Ma and K. Spiliopoulos}
\footnote{Boston University, Department of Mathematics and Statistics, 111 Cummington Mall, Boston, 02215.
E-mails: isaacson@math.bu.edu, majw@bu.edu, kspiliop@math.bu.edu}
\thanks{K.S. was partially supported by the NSF DMS-1550918, Simons Foundation Award 672441 and ARO W911NF-20-1-0244. J. M. and S. A. I. were partially supported by NSF DMS-1902854 and ARO W911NF-20-1-0244.}

\begin{abstract}
Reaction-diffusion PDEs and particle-based stochastic reaction-diffusion (PBSRD) models are commonly-used approaches for modeling the spatial dynamics of chemical and biological systems. Standard reaction-diffusion PDE models ignore the underlying stochasticity of spatial transport and reactions, and are often described as appropriate in regimes where there are large numbers of particles in a system. Recent studies have proven the rigorous large-population limit of PBSRD models, showing the resulting mean-field models (MFM) correspond to non-local systems of partial-integro differential equations. In this work we explore the rigorous relationship between standard reaction-diffusion PDE models and the derived MFM. We prove that the former can be interpreted as an asymptotic approximation to the later in the limit that bimolecular reaction kernels are short-range and averaging. As the reactive interaction length scale approaches zero, we prove the MFMs converge at second order to standard reaction-diffusion PDE models. In proving this result we also establish local well-posedness of the MFM model in time for general systems, and global well-posedness for specific reaction systems and kernels. \commentab{Finally, we illustrate the agreement and disagreement between the MFM, SM and the underlying particle model for several numerical examples.}

\end{abstract}
\date{\today}

\maketitle


\section{Introduction}
Reaction-diffusion partial differential equations (PDEs) are often used to model the average or large-population dynamics of systems of reacting and diffusing particles at a macroscopic level. In biological contexts, such models may describe signaling pathways inside cells, interactions within populations (of cells, animals or people), or chemical signals within tissues. Standard mass-action-based reaction-diffusion PDE models (subsequently \textbf{SM} for standard model) in the form of \cref{Eq:density_formula_PDEs} in Section \ref{S:GeneralSetup},  are extensively used in modeling studies, for example \cite{ECM:2007, MNKS:2009, NTS:2008}. They can be formally obtained from classical mass-action ODE models for chemical reactions \cite{G:2000} by adding a diffusion operator (or more generally a second order elliptic operator) to model spatial transport. However, such a formulation does not capture the detailed spatial reaction mechanisms described in more microscopic particle-based stochastic reaction-diffusion (PBSRD) models~\cite{TeramotoDoiModel1967,DoiSecondQuantA,DoiSecondQuantB}.  The latter explicitly resolve the diffusion of, and reactions between, individual molecules, which are often represented as point particles.

Due to their mathematical complexity and high dimensionality, PBSRD models are typically studied by Monte Carlo simulations approximating the underlying stochastic process of molecules diffusing and reacting. This is often computationally expensive, particularly in systems with large populations for which the dynamics are well-approximated as deterministic. In order to deal with this issue, coarse-grained models were developed in \cite{LLN:2019,IMS:2020} by proving the large population mean-field limit of the measure-valued stochastic process (MVSP) tracing the empirical distribution of the particles. In this limit the empirical distribution can be rigorously shown to satisfy an evolution equation for which the corresponding limiting density satisfies a system of partial-integro differential equations (PIDEs), e.q.~\cref{Eq:density_formula_PIDEs} in Section~\ref{S:GeneralSetup}. We subsequently call these PIDEs the mean-field model (\textbf{MFM}). In contrast to the \textbf{SM}, the nonlocal  \textbf{MFM} retains the detailed spatial reaction mechanisms of the underlying PBSRD model.

\commentab{The new contributions of this paper} are to study the relationship between the formal \textbf{SM}  and the coarse-grained and rigorously derived \textbf{MFM} for PBSRD systems. We show that the \textbf{SM} can be interpreted as an asymptotic approximation to the more general \textbf{MFM} when the underlying reactive interaction kernels are averaging and short-range. In particular, we prove that the solution to the \textbf{MFM} converges to the solution of the \textbf{SM} for such kernels as the kernel's width approaches zero. The convergence is proven to be second order in the approximation parameter that captures the kernel width, see Theorem~\ref{thm:convTwoModels}. We also demonstrate by numerical simulations the convergence as the kernel width approaches zero of the \textbf{MFM} to the \textbf{SM}, show how well the \textbf{MFM} and \textbf{SM} agree with the average concentration fields of PBSRD models, and demonstrate that when the reactive interaction distance between two particles is not sufficiently "small" the \textbf{SM} may provide a poor approximation to either the \textbf{MFM} or mean concentration fields of the PBSRD models. \commentab{To illustrate a biologically-relevant case where the MFM is needed, we conclude with a simple model motivated by our recent study of T-cell receptor signaling~\cite{ZI:2020}. There it is not immediately clear how to even define an appropriate SM. In contrast, we demonstrate that the (well-defined) MFM correctly reproduces the qualitative steady-state behavior of the underlying particle model observed in~\cite{ZI:2020}.}

\commentab{As our main result is presented for systems with general reaction networks, and relatively notationally heavy, we now give a brief summary of it in the special case of the multi-particle $\textrm{A} + \textrm{B} \leftrightarrows \textrm{C}$ reaction.} We assume all molecules move by Brownian motion in $\R^{d}$ with species-dependent diffusivities, $\DA$, $\DB$ and $\DC$. Assume an \textrm{A} molecule at $x$ and \textrm{B} molecule at $y$ may react with probability per time $\hat{K}_{1}^{\epsilon}(x-y)/\gamma$. Here $\gamma$ denotes a large system size parameter, for example Avogadro's number in $\R^{d}$~\cite{LLN:2019,IMS:2020}. $\epsilon$ denotes the length scale of the kernel, related to our assumption that it has an averaging form, i.e. that $\hat{K}_{1}^{\epsilon}(x-y) \to \kappa_{1} \delta(x-y)$ as $\epsilon \to 0$. Common choices for $\hat{K}_1^{\epsilon}/\gamma$ are given in \cref{Remark:kernels}, and include the popular Doi model in which the two reactants may react with a fixed probability per time when within $\epsilon$~\cite{TeramotoDoiModel1967,DoiSecondQuantA,DoiSecondQuantB}. We denote by $m_{1}(z \vert x,y)$ the probability density that when an \textrm{A} molecule at $x$ and a \textrm{B} molecule at $y$ react they produce at \textrm{C} molecule at $z$. $m_{1}$ is commonly chosen to place $z$ on the line connecting $x$ and $y$, see~\cref{Assume:measureTwo2One}. For the reverse reaction we assume $K_{2}(z)=\kappa_{2}$ denotes the constant probability per time an individual \textrm{C} molecule splits into \textrm{A} and \textrm{B} molecules, with $m_{2}^{\epsilon}(x,y\vert z)$ denoting the probability density that a \textrm{C} at $z$ which dissociates produces an \textrm{A} molecule at $x$ and a \textrm{B} molecule at $y$. The dependence of $m_{2}^{\epsilon}$ on $\epsilon$ follows under the assumption of detailed balance for the reversible reaction, see \cref{Assume:measureOne2Two} and \cref{Assume:measureMrho}.

With these choices, the large population limit for the volume reactivity PBSRD model of the particles diffusing and reacting, \commentab{a special case of the limit we proved for general systems in~~\cite{IMS:2020}}, shows that the molar concentration fields for each species converge in a weak sense to the solution of a nonlocal system of PIDEs. More concretely, denote by $A(t)$ the stochastic process for the number of species \textrm{A} molecules at time $t$, and label the position of $i$th molecule of species \textrm{A} at time $t$ by the stochastic process $\vQ_i^{A(t)} \subset \R^d$. The random measure
\begin{equation*}
A^{\gamma}_{\epsilon}(x,t) = \frac{1}{\gamma} \sum_{i=1}^{A(t)} \delta\paren{x - \vQ_i^{A(t)}}
\end{equation*}
corresponds to the stochastic process for the molar concentration of species \textrm{A} at $x$ at time $t$. We can similarly define $B^{\gamma}_{\epsilon}(x,t)$ and $C^{\gamma}_{\epsilon}(x,t)$. The large population (thermodynamic) limit where $\gamma \to \infty$ and $(A^{\gamma}_{\epsilon}(x,0),B^{\gamma}_{\epsilon}(x,0),C^{\gamma}_{\epsilon}(x,0))$ converge to well-defined limiting molar concentration fields gives (in a weak sense) that
\begin{align*}
 \paren{A^{\gamma}_{\epsilon}(x,t),B^{\gamma}_{\epsilon}(x,t),C^{\gamma}_{\epsilon}(x,t)} \to \paren{A_{\epsilon}(x,t), B_{\epsilon}(x,t), C_{\epsilon}(x,t)},
\end{align*}
which satisfy the \textbf{MFM}
\begin{align}
\partial_t A_\epsilon(x, t) &=  D_1\lap_x A_\epsilon(x, t) -  \left(\int_{\R^d} K_{1}^\epsilon(x - y) B_\epsilon(y, t) \, dy\right) A_\epsilon(x, t) + \int_{\R^d}K_{2}(z)\left( \int_{\R^d} m_{2}^\epsilon(x,y|z) dy \right)C_\epsilon(z, t)\, dz \nonumber\\
\partial_t B_\epsilon(y, t)&=  D_2\lap_y B_\epsilon(y, t) - \left(\int_{\R^d} K_{1}^\epsilon(x - y) A_\epsilon(x, t) \, dx\right) B_\epsilon(y, t)+ \int_{\R^d}K_{2}(z)\left( \int_{\R^d} m_{2}^\epsilon(x,y|z) dx \right)C_\epsilon(z, t)\, dz \nonumber\\
\partial_t C_\epsilon(z, t)&= D_3\lap_z C_\epsilon(z, t) -  K_{2}(z)C_\epsilon(z, t) + \int_{\R^{2d}} K_{1}(x, y)m_1(z | x, y) A_\epsilon(x, t)B_\epsilon(y, t) \, dx \, dy.\nonumber\\
\label{Eq:density_formula_reversible_PIDE}
\end{align}

The commonly used \textbf{SM} is a more formally derived system of reaction-diffusion PDEs. Denote by $A(x, t), B(y, t), C(z, t)$ the molar concentration fields for species \textrm{A}, \textrm{B}, \textrm{C} in the \textbf{SM}, which satisfy
\begin{align}
\partial_tA(x, t) &=  D_1\lap_x A(x, t) - \kappa_1A(x, t)  B(x, t) + \kappa_2 C(x, t), \nonumber\\
\partial_t B(y, t)&=  D_2 \lap_y  B(y, t) - \kappa_1 A(y, t) B(y, t)+ \kappa_2 C(z, t), \nonumber\\
\partial_t C(z, t)&=  D_3 \lap_z C(z, t) -  \kappa_2 C(z, t) + \kappa_1 A(z, t) B(z, t) .
\label{Eq:density_formula_reversible_PDE}
\end{align}
\commentab{The main theoretical result of this paper}, \cref{thm:convTwoModels}, proves that when the kernels and placement measures have averaging forms,
\begin{equation*}
\paren{A_{\epsilon}(x,t), B_{\epsilon}(x,t), C_{\epsilon}(x,t)} = (A(x,t),B(x,y),C(x,t)) + O(\epsilon^{2}), \quad \epsilon \to 0.
\end{equation*}
This demonstrates that we may interpret the \textbf{SM} as an approximation to the rigorous \textbf{MFM} when reactive interactions are short-range.

Nonlocal reaction-diffusion PIDE models related to our \textbf{MFM} have been introduced and studied for a variety of physical applications, including for models of population dynamics, evolutionary dynamics, and neuronal dynamics~\cite{NTY:2017, CFF:2019, SVA:2013, JG:1989, SSS:2020, PGB:2019, GB:1996}. These works focus on the effects of different nonlocal reaction kernels, and how the stability of steady states is influenced by the nonlocal interactions. To our knowledge, there is no rigorous analysis on the closeness between such nonlocal PIDE models and corresponding local reaction-diffusion models analogous to our \textbf{SM}. Our work is also distinguished in the generality of the considered chemical reaction systems, the range of spatial reaction rate kernels, and the variety of reaction product placement measures that are allowed.

As we completed this work, we became aware of the recent preprint~\cite{K:2020}. The authors provide an intuitive, formal argument for going from the Doi PBSRD model to the \textbf{SM} in the large-population limit for systems with bimolecular reactions. This involves discretizing the forward equation for the PBSRD model to a spatially-discrete convergent reaction-diffusion master equation (CRDME) model~\cite{I:2013,IZ:2018}, assuming that the CRDME's second moments can be approximated as products of first moments in the large-population limit, and assuming that interaction length scales are sufficiently small that convolution sums representing bimolecular interactions between molecules at nearby lattice locations can be approximated by point interactions. Under these assumptions, a PDE corresponding to the \textbf{SM} for a bimolecular reaction is obtained when the lattice spacing within an approximating mean-field model for the CRDME is taken to zero. The present work is distinguished from such studies through our rigorous and direct proof that the \textbf{SM} is the short (bimolecular) interaction-range limit of the \textbf{MFM} (which was previously proven to be the rigorous large-population limit of the PBSRD model~\cite{LLN:2019,IMS:2020}). It is also distinguished in providing an error bound that demonstrates the rate of convergence of the \textbf{MFM} to the \textbf{SM} as the bimolecular interaction-distance is decreased.

\commentab{Finally, we reiterate that in \cite{IMS:2020} we proved that the \textbf{MFM}  is the correct large population (thermodynamic) limit as $\gamma\rightarrow\infty$ of a wide-class of particle-based stochastic reaction diffusion models. In contrast, this paper explores the rigorous relationship between the \textbf{SM} and the \textbf{MFM} as $\epsilon$ is varied as explained above.  The rest of the paper is organized as follows. In \cref{S:GeneralSetup} we introduce the general setup, notation and our assumptions. In \cref{S:MainThm} we present our main result, \cref{thm:convTwoModels}, on the approximation of the \textbf{MFM} by the \textbf{SM} as the bimolecular reactive interaction distance $\epsilon \to 0$. In addition, we also present a number of numerical studies that demonstrate the (rate of) convergence of the \textbf{MFM} to the \textbf{SM} as $\epsilon \to 0$ \commentab{(when both exist)}, and illustrate how well the \textbf{MFM} and \textbf{SM} models agree with the underlying PBSRD model for varying values of $\epsilon$. Our numerical results first demonstrate in several simple systems the theoretical findings that the \textbf{SM} is the short (bimolecular) interaction-range limit of the \textbf{MFM}, see Section \ref{SS:Numerics}. We then explore several biologically-motivated examples, see Section \ref{SS:NumericsBeyond}, where the \textbf{MFM} correctly captures essential behavior of the underlying particle-based stochastic system, but the \textbf{SM} sometimes fails to do so (or it is not immediately clear how to even define the \textbf{SM}). These illustrate the utility in using a rigorously derived-MFM for both model formulation when the \textbf{SM} is not clear, and as a means of assessing the accuracy of the \textbf{SM}. \cref{S:Local} proves local well-posedness of the \textbf{MFM} and the \textbf{SM} models. \cref{S:Global} discusses global well-posedness of the \textbf{MFM} for specific choices of the reaction kernels. Appendices~\ref{A:appendixLemDiff} and~\ref{A:appendixLemDerivative} give proofs of technical results used throughout the paper.}

\section{General Setup and Main Assumptions}\label{S:GeneralSetup}

For simplicity, in our rigorous studies we assume that molecules diffuse freely in $\R^d$, with $D_j$ denoting the diffusion coefficient for species $S_j$, $ j = 1,\cdots, J$. Let $L$ be the number of possible reactions, each labelled by $\mathscr{R}_1, \cdots , \mathscr{R}_L$. The change in the number of particles induced by the $\mathscr{R}_\ell$th reaction,  $\ell \in \{1,\dots,L\}$, is given by the chemical equation
\begin{equation*}
\sum_{j = 1}^{J} \alpha_{\ell j}S_j \rightarrow \sum_{j = 1}^{J} \beta_{\ell j}S_j,
\end{equation*}
where we assume the stoichiometric coefficients $\{\alpha_{\ell j}\}_{j=1}^{J}$ and $\{\beta_{\ell j}\}_{j=1}^J$ are non-negative integers. Let $\vec{\alpha}^{(\ell)} = (\alpha_{\ell 1}, \alpha_{\ell 2}, \cdots, \alpha_{\ell J})$ and $\vec{\beta}^{(\ell)} = (\beta_{\ell 1}, \beta_{\ell 2}, \cdots, \beta_{\ell J})$ be multi-index vectors collecting the coefficients of the $\ell$th reaction. We denote the reactant and product orders of the reaction by $|\vec{\alpha}^{(\ell)}|\doteq\sum_{i = 1}^{J} \alpha_{\ell i} \leq 2$ and $|\vec{\beta}^{(\ell)}|\doteq\sum_{j = 1}^{J} \beta_{\ell j}\leq 2$, assuming that at most two reactants and two products participate in any reaction. We therefore implicitly assume all reactions are at most second order.

We introduce two notations to encode reactant and product particle positions, that are subsequently needed to specify the \textbf{MFM}. \commentab{Note, these definitions are the same under which we derived the MFM in~\cite{IMS:2020}.}
\begin{definition}\label{def:reacPosSpace}
For reaction $\mathscr{R}_{\ell}$ we define the reactant position space
\begin{equation*}
\mathbb{X}^{(\ell)} = \{ \vec{x} = (x^{(1)}_1, \cdots, x^{(1)}_{\alpha_{\ell 1}}, \cdots, x^{(J)}_1, \cdots, x^{(J)}_{\alpha_{\ell J}}) \, | \,  x_r^{(j)} \in \R^d, \text{ for all } 1\leq j\leq J, 1\leq r\leq \alpha_{\ell j} \}  = \left(\R^d\right)^{|\vec{\alpha}^{(\ell)}|}.
\end{equation*}
Here when $\alpha_{\ell j} = 0$ species $j$ is not a reactant for the $\ell$th reaction, and hence there will be no indices for particles of species $j$ within the reactant position space. In the underlying PBSRD model, $\vec{x}\in \mathbb{X}^{(\ell)}$ represents one possible configuration for the (sampled) positions of individual reactant particles that might undergo an $\mathscr{R}_{\ell}$ reaction. $x_r^{(j)}$ then labels the sampled position for the $r$th reactant particle of species $j$ involved in this specific instance of the reaction. We let $d\vec{x} = \left( \bigwedge_{j = 1}^J (\bigwedge_{r = 1}^{ \alpha_{\ell j}} d x_r^{(j)}) \right) $ be the corresponding volume form on $\mathbb{X}^{(\ell)}$.
\end{definition}

\begin{definition}\label{def:prodPosSpace}
For reaction $\mathscr{R}_{\ell}$ with $1\leq \ell\leq L$ i.e. having at least one product particle we define the product position space
\begin{equation*}
\mathbb{Y}^{(\ell)} = \{ \vec{y} = (y^{(1)}_1, \cdots, y^{(1)}_{\beta_{\ell 1}}, \cdots, y^{(J)}_1, \cdots, y^{(J)}_{\beta_{\ell J}}) \, | \,  y_r^{(j)} \in \R^d, \text{ for all } 1\leq j\leq J, 1\leq r\leq \beta_{\ell j} \} = \left(\R^d\right)^{|\vec{\beta}^{(\ell)}|}.
\end{equation*}
Here when $\beta_{\ell j} = 0$ species $j$ is not a product for the $\ell$th reaction, and hence there will be no indices for particles of species $j$ within the product position space. In the underlying PBSRD model, $\vec{y}\in \mathbb{Y}^{(\ell)}$ represents one possible configuration for the (sampled) positions of individual product particles that might be produced by an $\mathscr{R}_{\ell}$ reaction. $y_r^{(j)}$ then labels the sampled position for the $r$th product particle of species $j$ involved in this specific instantance of the reaction. Let $d\vec{y} = \left( \bigwedge_{j = 1}^J (\bigwedge_{r = 1}^{ \beta_{\ell j}} d y_r^{(j)}) \right) $ be the corresponding volume form on $\mathbb{Y}^{(\ell)}$.
\end{definition}

\begin{remark}
In the case of no product for reaction $\mathscr{R}_{\ell}$ with $1\leq \ell\leq L$, for example, $A\to\emptyset$, $A+B\to\emptyset$, etc, we naturally set the product position space $\mathbb{Y}^{(\ell)} $ defined in \cref{def:prodPosSpace} to be the empty set.

\end{remark}

We denote by $K_\ell(\vec{x})$ the reaction rate kernel (i.e. probability per time) that reactant particles with positions $\vec{x}\in \mathbb{X}^{(\ell)}$ undergo reaction $\mathscr{R}_{\ell}$.  $m_\ell( \vec{y} \, | \,  \vec{x})$ will represent the reaction's placement measure, giving the probability density that when the reaction occurs reactants at positions $\vec{x}\in \mathbb{X}^{(\ell)}$ generate products at positions $\vec{y}\in \mathbb{Y}^{(\ell)}$.

We formulate the dynamics of the \textbf{MFM} in terms of the time evolution of the spatial molar concentration field for species $j$ at time $t$, denoted by $\rho_j(x, t)$, $j = 1, 2, \cdots, J$. As  summarized in the introduction, in~\cite{IMS:2020} we proved the coarse-grained large-population limit of the PBSRD model of particles diffusing and reacting is given by a coarse-grained system of PIDEs corresponding to our \textbf{MFM}. In contrast to the \textbf{SM}, the latter accounts for spatially distributed chemical interactions between particles in a manner that is consistent with the PBSRD model. For a general system of reacting and diffusing particles with the notation defined above, the derived coarse-grained \textbf{MFM} is given by the coupled system of (non-local) reaction-diffusion PIDEs
\begin{align}
\partial_t\rho_j(x, t)
&
= D_{j} \lap_x \rho_j(x, t) - \sum_{\ell = 1}^L \paren{
\frac{1}{\vec{\alpha}^{(\ell)}!}  \sum_{r = 1}^{\alpha_{\ell j}} \int_{\tilde{\vec{x}} \in \mathbb{X}^{(\ell)}}   \delta_{x}(\tilde{x}_r^{(j)})  K_\ell(\tilde{\vec{x}}) \, \left( \Pi_{k = 1}^{J} \Pi_{s = 1}^{\alpha_{\ell k}}  \rho_{k}(\tilde{x}_{s}^{(k)}, t)\right) \, d\tilde{\vec{x}}
}.  \nonumber\\
&
\quad+\sum_{\ell = 1}^L \paren{  \frac{1}{\vec{\alpha}^{(\ell)}!} \sum_{r = 1}^{\beta_{\ell j}}  \int_{\tilde{\vec{x}} \in\mathbb{X}^{(\ell)}}  K_\ell(\tilde{\vec{x}}) \left( \int_{\vy \in \mathbb{Y}^{(\ell)}}   \delta_{x}(y_r^{(j)}) m_\ell(\vec{y}\, | \,\tilde{\vec{x}}) \,d \vec{y} \right) \left( \Pi_{k = 1}^J \Pi_{s = 1}^{\alpha_{\ell k}}  \rho_{k}(\tilde{x}_s^{(k)}, t)\right) \, d\tilde{\vec{x}}
},
\label{Eq:density_formula_PIDEs}
\end{align}
where $j = 1,\dots,J$.

\begin{remark}
In \cref{Eq:density_formula_PIDEs} the expressions $\int  \delta_{x}(\tilde{x}_r^{(j)}) \cdots \, d\tilde{x}_r^{(j)}$ and $\int  \delta_{x}(y_r^{(j)}) \cdots \, dy_r^{(j)}$ are used as convenient and systematic notations to represent replacing $\tilde{x}_r^{(j)}$ and $y_r^{(j)}$ with $x$ through the formal action of the $\delta$-function. That is
\begin{equation*}
\int_{{\R^{d}}}  \delta_{x}(\tilde{x}_r^{(j)}) f(\tilde{x}_r^{(j)}) \, d\tilde{x}_r^{(j)} \doteq f(x).
\end{equation*}
\end{remark}

In contrast to the \textbf{MFM}, the standard reaction-diffusion PDE model (\textbf{SM}) used for modeling the time evolution of chemical concentration fields extends spatially-homogeneous law of mass action-based ODE models by adding Laplacians to model molecular diffusion~\cite{ECM:2007, MNKS:2009, NTS:2008}. The \textbf{SM} involves only local chemical interactions, a major simplification from more-detailed PBSRD models, with reactions occurring based on a spatially uniform reaction rate constant inherited from the underlying mass-action ODE model. For reaction $\mathscr{R}_\ell$ we denote this constant by $\kappa_\ell$. The \textbf{SM} is then the coupled system of PDEs

\begin{align}
\partial_t\rho_j(x, t)
&
= D_{j} \lap_x \rho_j(x, t) + \sum_{\ell = 1}^L \kappa_\ell (\beta_{\ell j} - \alpha_{\ell j}) \paren{ \Pi_{k = 1}^J \rho_{k}(x, t)^{\alpha_{\ell k}} }, \quad j = 1,\dots,J.
\label{Eq:density_formula_PDEs}
\end{align}

\subsection{Assumptions for  \textbf{MFM}}\label{Ass:MainAssumptions}

To discuss the connection between the \textbf{MFM} and \textbf{SM} models,  we introduce a reactive interaction scale parameter $\epsilon$ in the following assumptions on the reaction kernels and placement measures for the \textbf{MFM}. In the remainder, we will exploit that in many applications $\epsilon$ is a small compared to macroscopic length-scales over which we are interested in the dynamics of the concentration fields.

\begin{assumption}\label{Assume:kernelTwo}
\commentab{We assume that for bimolecular reactions the rate kernels $K_\ell(x, y)$, $x, y\in \R^d$ depend only on the separation distance, $\abs{x - y}$, of the two particles. In particular, we shall write $K_\ell(x, y) = K^\epsilon_\ell(x, y) = \hat{K}_\ell^\epsilon(x-y) = \hat{K}_\ell^\epsilon(y-x) $, where $\hat{K}$ has domain $\R^d$, to indicate this more explicitly, and to indicate the length scale over which the reaction can occur. We also assume that $||\hat{K}_\ell^{\epsilon}(w)||_{L^1} = k_\ell$ and $\int_{\R^d} \hat{K}^\epsilon_\ell(w)  |w|^2 \, dw = O(\epsilon^2)$ for all $\epsilon$. This last assumption is key for determining the rate of convergence of the \textbf{MFM} to the \textbf{SM} we establish in~\cref{thm:convTwoModels} as $\epsilon \to 0$. Finally, as $\epsilon\to 0$ we assume that $\hat{K}_\ell^\epsilon(w)\to k_\ell\delta(w)$ in distribution.}
\end{assumption}

\begin{remark} \label{Remark:kernels}
We previously showed how such $\epsilon$-scalings arise naturally when calibrating parameters in the PBSRD model to recover known/measured well-mixed parameters in the fast diffusion limit~\cite{IMS:2020}. For example, with such calibrations \cref{Assume:kernelTwo} is satisfied for the commonly used Doi kernel
\begin{equation*}
\hat{K}^\epsilon_\ell(w) = \frac{k_\ell}{|B_\epsilon(0)|}\mathbbm{1}_{\{|w|\leq \epsilon\}},
\end{equation*}
where $|B_\epsilon(0)|$ is the volume of d-dimensional ball with radius $\epsilon$. The scaling can also be derived from more microscopic polymer models of tethered interactions between membrane bound proteins, which give the coarse-grained Gaussian kernel
\begin{equation*}
\hat{K}^\epsilon_\ell(w) = \frac{k_\ell}{\paren{\sqrt{2\pi\epsilon^2}}^d} e^{-\frac{|w|^2}{2\epsilon^2}},
 \end{equation*}
see~\cite{ZI:2020} for details and references.

\commentab{Both these kernels are examples with a mollification-type scaling that $\hat{K}_\ell^\epsilon(w) = \frac{1}{\epsilon^d}\hat{K}_\ell^1(\frac{w}{\epsilon})$, with $||\hat{K}_\ell^1(w)||_{L^1} = k_\ell$ and a finite second moment (i.e. $||\hat{K}_\ell^1(w)|w|^2||_{L^1} < \infty$). We then have that $\int_{\R^d} \hat{K}^\epsilon_\ell(w)  |w|^2 \, dw = o(\epsilon^2)$ as $\epsilon \to 0$, and $\hat{K}_\ell^\epsilon(w)\to k_\ell\delta(w)$ in distribution as $\epsilon\to 0$, motivating our choices in~\cref{Assume:kernelTwo}.}
\end{remark}

\begin{assumption}\label{Assume:kernelOne}
We assume that the unimolecular reaction rate function $K_\ell(x)$, $x\in\R^d$, is a constant i.e. $K_\ell(x) = k_\ell$. \commentab{ We note that there is no $\epsilon$ dependence in this case.}
\end{assumption}

\begin{assumption}\label{Assume:measureP}
We assume that for any  $1\leq \ell \leq L$, $\vec{y}\in \mathbb{Y}^{(\ell)}$ and $\vec{x}\in \mathbb{X}^{(\ell)}$,  the placement measure  $m_{\ell}(\vec{y} \, | \, \vec{x})$ is a probability measure, i.e. $\int_{\mathbb{Y}^{(\ell)}} m_{\ell}(\vec{y} \, | \, \vec{x})\, d\vec{y}= 1$. Notice that in the case of a reaction with no products, $\mathbb{Y}^{(\ell)} = \emptyset$, without loss of generality, let us assume that $\int_{\mathbb{Y}^{(\ell)}} m_{\ell}(\vec{y} \, | \, \vec{x})\, d\vec{y}= 1$ still holds.
\end{assumption}

If $\mathscr{R}_{\ell}$ is a zeroth order reaction (birth reaction) of the form $\emptyset \rightarrow S_i$, the reaction rate function is typically assumed to be a constant, $K_\ell = k_\ell$. In~\cite{IMS:2020} (Remark 5.7), we explained how the mean-field large-population limit holds if we assume that the placement measure $m(y), y\in\R^d$ for such a birth reaction has compact support. In biological applications such reactions typically occur within a compact region of $\R^3$, for example the interior of a cell, and as such $K_\ell$ and the placement measure should be zero outside of the region of interest. To avoid having a spatially varying birth rate, in the remainder we exclude zero'th order birth processes. We note, however, that this choice is made to simplify notation and subsequent calculations; nothing in our analysis fundamentally precludes the incorporation of zeroth order reactions.

\begin{assumption}\label{Assume:measureOne2One}
If $\mathscr{R}_{\ell}$ is a first order reaction of the form $S_i \rightarrow S_j$, we assume that the placement measure  $m_{\ell}(y\,|\, x)$ takes the form
$$m_{\ell}(y\,|\, x) = \delta_x(y).$$
This describes that the newly created $S_j$ particle is placed  at the position of the reactant $S_i$ particle.
\end{assumption}

\begin{assumption}\label{Assume:measureTwo2One}
If $\mathscr{R}_{\ell}$ is a second order reaction of the form $S_i + S_k \rightarrow S_j$, we assume that the placement measure  $m_{\ell}(z\vert x,y)$ takes the form
 $$m_{\ell}(z\vert x,y) = \sum_{i=1}^{I}p_i \times \delta\left(z-(\alpha_i x +(1-\alpha_i)y)\right),$$
  where $I$ is a fixed finite integer and $\sum_i p_i = 1$. This describes that the creation of particle $S_j$ is always on the segment connecting the reactant $S_i$ and reactant $S_k$ particles, but allows some random choice of position. A special case would be $I = 2$, $p_i = \tfrac{1}{2}$, $\alpha_1 = 0$ and $\alpha_2 = 1$, which corresponds to placing the particle randomly at the position of one of the two reactants. One common choice is taking $I = 1$, $p_1 = 1$ and choosing $\alpha_1$ to be the diffusion weighted center of mass~\cite{IZ:2018}.
\end{assumption}

\begin{assumption}\label{Assume:measureTwo2Two}
If $\mathscr{R}_{\ell}$ is a second order reaction of the form $S_i + S_k \rightarrow S_j + S_r$, we assume that the placement measure  $m_\ell(z, w \, | \, x, y)$ takes the form
$$m_\ell(z, w \, | \, x, y) = p\times\delta_{(x, y)}\left((z, w)\right)  + (1-p)\times\delta_{(x, y)}\left((w, z)\right).$$
This describes that newly created product $S_j$ and $S_r$ particles are always at the positions of the reactant $S_i$ and $S_k$ particles. $p$ is typically either $0$ or $1$, depending on the underlying physics of the reaction.
\end{assumption}

\begin{assumption}\label{Assume:measureOne2Two}
If $\mathscr{R}_{\ell}$ is a first order reaction of the form $S_i \rightarrow S_j + S_k$, we assume the placement measure depends on the separation scale parameter $\epsilon$ and is in the form
$$m^\epsilon_{\ell}(x,y\,|\,z) = \rho^\epsilon(|x-y|) \sum_{i=1}^{I}p_i\times\delta\left(z-(\alpha_i x +(1-\alpha_i)y)\right),$$
with $\sum_i p_i = 1$. Here we assume the relative separation of the product $S_j$ and $S_k$ particles, $|x -y|$, is sampled from the probability density $\rho^\epsilon(|x-y|)$. Their (weighted) center of mass is sampled from the density encoded by the sum of $\delta$ functions. Such forms are common for detailed balance preserving reversible bimolecular reactions~\cite{IZ:2018}, from which $\rho^{\epsilon}$ obtains the explicit $\epsilon$ dependence.
\end{assumption}

We further assume some regularity of the separation placement density, $\rho^\epsilon(|w|)$, $w\in\R^d$, introduced in \cref{Assume:measureOne2Two}:

\begin{assumption}\label{Assume:measureMrho}
For \cref{Assume:measureP} to be true, we require $||\rho^\epsilon||_{L^1(\R^d)} = 1$ for all $\epsilon$. When $\rho^\epsilon(|w|)$ comes from a reversible bimolecular reaction that satisfies detailed-balance, it will have a similar functional form to the bimolecular reaction kernel $\hat{K}^\epsilon_\ell(w)$ in \cref{Assume:kernelTwo}. For this reason, as $\epsilon\to 0$ we assume that $\rho^\epsilon(|w|)\to \delta(w)$ in distribution and $\int_{\R^d} \rho^\epsilon(|w|)|w|^2\, dw = O(\epsilon^2)$.
\end{assumption}

Note, with the preceding assumptions the placement measure only depends on $\epsilon$  for dissociation reactions of the form $S_{i} \to S_{j} + S_{k}$.

In what follows we rewrite the \textbf{MFM} to make explicit the $\epsilon$-dependence giving
\begin{align}
\partial_t\rho^\epsilon_j(x, t)
&
= D_{j} \lap_x \rho^\epsilon_j(x, t) - \sum_{\ell = 1}^L \paren{
\frac{1}{\vec{\alpha}^{(\ell)}!}  \sum_{r = 1}^{\alpha_{\ell j}} \int_{\tilde{\vec{x}} \in \mathbb{X}^{(\ell)}}   \delta_{x}(\tilde{x}_r^{(j)})  K^\epsilon_\ell(\tilde{\vec{x}}) \, \left( \Pi_{k = 1}^{J} \Pi_{s = 1}^{\alpha_{\ell k}}  \rho^\epsilon_{k}(\tilde{x}_{s}^{(k)}, t)\right) \, d\tilde{\vec{x}}
}  \nonumber\\
&
\quad+\sum_{\ell = 1}^L \paren{  \frac{1}{\vec{\alpha}^{(\ell)}!} \sum_{r = 1}^{\beta_{\ell j}}  \int_{\tilde{\vec{x}} \in\mathbb{X}^{(\ell)}}  K^\epsilon_\ell(\tilde{\vec{x}}) \left( \int_{\vy \in \mathbb{Y}^{(\ell)}}   \delta_{x}(y_r^{(j)}) m^\epsilon_\ell(\vec{y}\, | \,\tilde{\vec{x}}) \,d \vec{y} \right) \left( \Pi_{k = 1}^J \Pi_{s = 1}^{\alpha_{\ell k}}  \rho^\epsilon_{k}(\tilde{x}_s^{(k)}, t)\right) \, d\tilde{\vec{x}}
}
\label{Eq:density_formula_PIDEs_epsilon}
\end{align}
for $j = 1,\dots,J$.


\section{Main Results}\label{S:MainThm}
With the preceding assumptions, we now prove the rigorous relationship between the \textbf{MFM} and \textbf{SM} models as the reactive interaction scale $\epsilon\to 0$. Our main theoretical result is given by \cref{thm:convTwoModels} on the approximation of the \textbf{MFM}, \cref{Eq:density_formula_PIDEs_epsilon}, by the \textbf{SM}, \cref{Eq:density_formula_PDEs} as $\epsilon\rightarrow 0$. In addition, we present a series of numerical studies in one and two dimensions to demonstrate the relationship between the two models and the underlying PBSRD model.

Before proceeding with \cref{thm:convTwoModels}, let us briefly discuss the issue of well-posedness of models  \cref{Eq:density_formula_PIDEs} (equivalently \cref{Eq:density_formula_PIDEs_epsilon}) and \cref{Eq:density_formula_PDEs}. The mild solution to both models is of the form
\begin{equation}\label{eq:mildsoluPDE}
\vecrho(t) = S(t)\vecrho(0) + \int_0^t S(t-s)N[\vecrho](s)\, ds,
\end{equation}
where $\vecrho(t) = (\rho_1(\cdot, t), \rho_2(\cdot, t), \cdots,  \rho_J(\cdot, t))^T$,  $S(t)$ is the semigroup generated by the linear diffusion operator Diag($D_1\lap_x, D_2\lap_x, \cdots, D_J\lap_x$), and $N[\vecrho] = \paren{ \paren{N[\vecrho]}_1, \paren{N[\vecrho]}_2, \cdots, \paren{N[\vecrho]}_J }^T$ represents the nonlinear reaction term.

Depending on the properties of the nonlinear reaction term $N[\vecrho]$ one obtains local in time well-posedness, i.e. in some interval $[0,T_0]$, or one obtains global in time well-posedness. In \cref{S:Local} we discuss local in time well posedness and regularity for both equations \cref{Eq:density_formula_PIDEs} and \cref{Eq:density_formula_PDEs}. In \cref{S:Global}, we discuss under which additional assumptions on $N[\vecrho]$ one has global well-posedness. As illustrative examples, in \cref{S:Global} we prove that reaction systems with reactions of the type $A+B \leftrightarrows C+D$ and $A+B \leftrightarrows C$ both satisfy the requirements for global well-posedness (the latter under specific choices for the placement measure).

\subsection{Approximation theorem}\label{S:ApproximationThm}

Let $C_{b, unif}(\R^d)$ denote the space of bounded and uniformly continuous functions on $\R^d$, and denote by $C_b^{\ell}(\R^d)$ the space of functions with continuous and uniformly bounded derivatives on $\R^d$ through order $\ell$. Our main result is the following theorem on the convergence of the \textrm{MFM} to the \textrm{SM} as $\epsilon \to 0$.

\begin{theorem}\label{thm:convTwoModels}
Let $T_0$ be a time such that the solutions to the \textbf{SM} and \textbf{MFM} are uniformly bounded for $(x, t) \in \R^{d} \times \brac{0,T_{0}}$ and $\epsilon$. Let $\rho_j(x, 0) = \rho_j^\epsilon(x, 0)\in C^1_b(\R^d)\cap C_{b, unif}(\R^d)$ for all $\epsilon > 0$ and $j = 1, \cdots, J $, and assume that
\begin{align*}
k_{\ell} = \paren{\vec{\alpha}^{(\ell)}!} \kappa_{\ell}.
\end{align*}
Under the assumptions in  \cref{Ass:MainAssumptions} we have that the solution to  \cref{Eq:density_formula_PIDEs_epsilon} converges uniformly in $(x,t)$ to the solution to \cref{Eq:density_formula_PDEs} at second order as $\epsilon \to 0$, i.e.
\begin{equation*}
  \sup_{t\in [0, T_0]}\max_{j = 1, \cdots, J}||\rho_j(x, t) - \rho_j^\epsilon(x, t)  ||_{L^\infty} = O(\epsilon^2).
\end{equation*}

\end{theorem}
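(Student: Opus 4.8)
The plan is to argue at the level of the shared mild-solution representation \cref{eq:mildsoluPDE}. Both models use the \emph{same} diffusion semigroup $S(t)$ and, by hypothesis, the same initial data $\vecrho(0) = \vecrho^\epsilon(0)$; writing $N$ for the local \textbf{SM} nonlinearity in \cref{Eq:density_formula_PDEs} and $N^\epsilon$ for the nonlocal \textbf{MFM} nonlinearity in \cref{Eq:density_formula_PIDEs_epsilon}, subtracting the two Duhamel formulas gives
\begin{equation*}
\vecrho(t) - \vecrho^\epsilon(t) = \int_0^t S(t-s)\paren{N[\vecrho](s) - N^\epsilon[\vecrho^\epsilon](s)}\, ds.
\end{equation*}
The engine is the standard consistency-plus-stability split, inserting $N^\epsilon[\vecrho]$:
\begin{equation*}
N[\vecrho] - N^\epsilon[\vecrho^\epsilon] = \paren{N[\vecrho] - N^\epsilon[\vecrho]} + \paren{N^\epsilon[\vecrho] - N^\epsilon[\vecrho^\epsilon]}.
\end{equation*}
Since each component of $S(t)$ is convolution against a unit-mass Gaussian and hence a contraction on $L^\infty$, it suffices to estimate the consistency term and the stability term in $L^\infty$ and close a Gronwall loop.

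The stability term is handled by a Lipschitz bound on $N^\epsilon$ over the set on which both solutions are uniformly bounded. For each bimolecular contribution one writes the difference of products as $\rho_i\rho_k - \rho_i^\epsilon\rho_k^\epsilon = \rho_i(\rho_k-\rho_k^\epsilon) + (\rho_i-\rho_i^\epsilon)\rho_k^\epsilon$ and applies Young's inequality for the convolution against $K_\ell^\epsilon$. Because $\|K_\ell^\epsilon\|_{L^1} = k_\ell$ is \emph{independent of} $\epsilon$ by \cref{Assume:kernelTwo}, and the solutions are uniformly bounded in $L^\infty$ by hypothesis, the resulting Lipschitz constant is uniform in $\epsilon$; unimolecular and placement terms are treated the same way, using that placement measures integrate to one (\cref{Assume:measureP}). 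This yields $\|N^\epsilon[\vecrho](s)-N^\epsilon[\vecrho^\epsilon](s)\|_{L^\infty} \le C\max_j\|\rho_j(s)-\rho_j^\epsilon(s)\|_{L^\infty}$ with $C$ independent of $\epsilon$.

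The consistency term is the crux and is where the second order in $\epsilon$ enters; both nonlinearities are evaluated at the \emph{same} smooth reference solution $\vecrho$, so it must be estimated reaction-by-reaction. For a forward bimolecular factor, a change of variables $w=x-y$ and a second-order Taylor expansion of the smooth factor give
\begin{equation*}
\int_{\R^d} K_\ell^\epsilon(w)\,\rho_k(x-w,s)\, dw = k_\ell\,\rho_k(x,s) + \tfrac12\int_{\R^d} K_\ell^\epsilon(w)\, w^{\mathsf{T}} \nabla^2\rho_k(x,s)\, w\, dw + \text{rem},
\end{equation*}
where the first-order term vanishes because $K_\ell^\epsilon$ is even (\cref{Assume:kernelTwo}), and the quadratic term and remainder are $O(\epsilon^2)\,\|\nabla^2\rho_k(s)\|_{L^\infty}$ by the second-moment bound $\int K_\ell^\epsilon(w)|w|^2\,dw = O(\epsilon^2)$. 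The hypothesis $k_\ell = (\vec{\alpha}^{(\ell)}!)\kappa_\ell$ is precisely what makes the leading constants of $N[\vecrho]$ and $N^\epsilon[\vecrho]$ agree. Dissociation placements are treated identically via the second-moment bound on $\rho^\epsilon$ in \cref{Assume:measureMrho}; unimolecular terms (\cref{Assume:kernelOne,Assume:measureOne2One}) match exactly and contribute no error; two-product placements (\cref{Assume:measureTwo2Two}) carry no $\epsilon$-dependence so their only error is from the rate kernel; and the segment-placement case (\cref{Assume:measureTwo2One}) again yields $O(\epsilon^2)$ since the kernel confines $x,y$ to within $O(\epsilon)$ of the placement point. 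These difference estimates are exactly the lemmas collected in Appendices~\ref{A:appendixLemDiff} and~\ref{A:appendixLemDerivative}.

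The one genuine subtlety, which I expect to be the main obstacle, is that the consistency bounds require control of $\|\nabla^2\rho_j(s)\|_{L^\infty}$ for the reference solution, whereas only $C_b^1$ initial data is assumed. I would supply this from the parabolic smoothing built into the local well-posedness theory of \cref{S:Local}: the estimate $\|\nabla^2 S(t)f\|_{L^\infty} \lesssim t^{-1/2}\|\nabla f\|_{L^\infty}$ gives $\|\nabla^2\rho_j(s)\|_{L^\infty} \lesssim s^{-1/2}$, an integrable singularity, so the consistency contribution to the Duhamel integral is $\int_0^t O(\epsilon^2)\,s^{-1/2}\,ds = O(\epsilon^2)$ uniformly on $[0,T_0]$. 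Setting $E(t)=\max_j\|\rho_j(t)-\rho_j^\epsilon(t)\|_{L^\infty}$ and combining the two estimates with the $L^\infty$ contractivity of $S(t)$ yields $E(t) \le C_1\epsilon^2 + C_2\int_0^t E(s)\,ds$, whence Gronwall's inequality delivers $\sup_{t\in[0,T_0]} E(t) \le C_1\epsilon^2 e^{C_2 T_0} = O(\epsilon^2)$. The hard part is thus not the Gronwall closure but the careful accounting over all reaction types and placement measures, together with securing the integrable-in-time, uniform-in-$\epsilon$ second-derivative bounds on the reference solution that make the odd moments vanish and the second moments dominate.
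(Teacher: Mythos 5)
Your proposal is correct and follows essentially the same route as the paper's proof: Duhamel difference, a consistency-plus-stability splitting, the second-moment/Taylor estimate with radially symmetric kernels killing the first-order term (the paper's \cref{lem:molifier} and the case-by-case \cref{lem:estimates1,lem:estimates2}), parabolic smoothing giving the integrable $\tau^{-1/2}$ bound on second derivatives (\textbf{Inequality 2} of \cref{thm:regularityPIDE,thm:regularityPDE}), and a Gronwall closure. The only cosmetic difference is the direction of the splitting — you insert $N^{\epsilon}[\vecrho]$ so the consistency error is evaluated at the \textbf{SM} solution and stability uses the nonlocal Lipschitz bound via Young's inequality, whereas the paper inserts the local nonlinearity at $\vecrho^{\epsilon}$, so its consistency term is evaluated at the \textbf{MFM} solution (requiring the uniform-in-$\epsilon$ $C_b^2$ bounds) and stability uses the local product — but this is a mirror image of the same argument, not a different method.
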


\begin{remark}
One can find such a $T_0$ in \cref{thm:convTwoModels} by a contraction mapping approach as in \cref{S:Local}, or choose any $T_0 < \infty$ for particular reaction systems for which global well-posedness holds with appropriate uniform estimates. Systems of the form $A+B \leftrightarrows C+D$ or $A+B \leftrightarrows C$ are shown to have such global well-posedness estimates in \cref{S:Global}. More generally, whether $T_{0}$ can be chosen arbitrarily large will depend on the global well-posedness of the \textbf{MFM} and the \textbf{SM} for a particular reaction network. As we describe in \cref{S:Global}, the construction of such a global well-posedness theory for general reaction systems is a still an open problem.
\end{remark}

\begin{proof}
By taking the difference of the mild solution to  \cref{Eq:density_formula_PIDEs_epsilon} and \cref{Eq:density_formula_PDEs} for any species $j$, $1\leq j\leq J$,  we have that
\begin{align}
\rho_j(x, t) - \rho_j^\epsilon(x, t)
&
=  e^{tD_j\lap_x } \left( \rho_j(x, 0) - \rho_j^\epsilon(x, 0) \right) -  \sum_{\ell = 1}^L\int_0^t e^{(t-\tau)D_j\lap_x }\sum_{r = 1}^{\alpha_{\ell j}}\bigg[ \kappa_{\ell}\times\Pi_{k = 1}^{J} \Pi_{s = 1}^{\alpha_{\ell k}}  \rho_{k}(x, \tau)\nonumber\\
&
\qquad - \paren{\frac{1}{\vec{\alpha}^{(\ell)}!}   \int_{\tilde{\vec{x}} \in \mathbb{X}^{(\ell)}}   \delta_{x}(\tilde{x}_r^{(j)})  K^\epsilon_\ell(\tilde{\vec{x}}) \, \left( \Pi_{k = 1}^{J} \Pi_{s = 1}^{\alpha_{\ell k}}  \rho^\epsilon_{k}(\tilde{x}_{s}^{(k)}, \tau)\right) \, d\tilde{\vec{x}} } \bigg]\nonumber\\
&
\quad +  \sum_{\ell = 1}^L \int_0^t e^{(t-\tau)D_j\lap_x }\sum_{r = 1}^{\beta_{\ell j}}\bigg[
 \kappa_{\ell}\times\Pi_{k = 1}^{J} \Pi_{s = 1}^{\alpha_{\ell k}}  \rho_{k}(x, \tau)\nonumber\\
&
\qquad - \paren{  \frac{1}{\vec{\alpha}^{(\ell)}!}   \int_{\tilde{\vec{x}} \in\mathbb{X}^{(\ell)}}  K^\epsilon_\ell(\tilde{\vec{x}}) \left( \int_{\vy \in \mathbb{Y}^{(\ell)}}   \delta_{x}(y_r^{(j)}) m^\epsilon_\ell(\vec{y}\, | \,\tilde{\vec{x}}) \,d \vec{y} \right) \left( \Pi_{k = 1}^J \Pi_{s = 1}^{\alpha_{\ell k}}  \rho^\epsilon_{k}(\tilde{x}_s^{(k)}, \tau)\right) \, d\tilde{\vec{x}}} \bigg]\, d\tau.
\end{align}
Using that $||e^{tD_j\lap_x} f||_{L^\infty}\leq ||f||_{L^\infty}$ for all $t\geq 0$ and $f\in L^\infty(\R^d)$, we have that
\begin{equation}\label{eq:L_infty_bound1_init}
||\rho_j(x, t) - \rho_j^\epsilon(x, t)  ||_{L^\infty} \leq  || \rho_j(x, 0) - \rho_j^\epsilon(x, 0) ||_{L^\infty}+  \sum_{\ell = 1}^L  \int_0^t  \Lambda^{\ell, j}(\tau) + \Theta^{\ell, j}(\tau) d\tau,
\end{equation}
where
\begin{equation}
\Lambda^{\ell, j}(\tau)  = \sum_{r = 1}^{\alpha_{\ell j}} \norm{  \paren{\frac{1}{\vec{\alpha}^{(\ell)}!}   \int_{\tilde{\vec{x}} \in \mathbb{X}^{(\ell)}}   \delta_{x}(\tilde{x}_r^{(j)})  K^\epsilon_\ell(\tilde{\vec{x}}) \, \left( \Pi_{k = 1}^{J} \Pi_{s = 1}^{\alpha_{\ell k}}  \rho^\epsilon_{k}(\tilde{x}_{s}^{(k)}, \tau)\right) \, d\tilde{\vec{x}} } -\kappa_{\ell}\times\Pi_{k = 1}^{J} \Pi_{s = 1}^{\alpha_{\ell k}}  \rho_{k}(x, \tau) }_{L^\infty},
\end{equation}
and
\begin{multline}
\Theta^{\ell, j}(\tau) = \sum_{r = 1}^{\beta_{\ell j}} \bigg|\bigg|  \paren{  \frac{1}{\vec{\alpha}^{(\ell)}!}   \int_{\tilde{\vec{x}} \in\mathbb{X}^{(\ell)}}  K^\epsilon_\ell(\tilde{\vec{x}}) \left( \int_{\vy \in \mathbb{Y}^{(\ell)}}   \delta_{x}(y_r^{(j)}) m^\epsilon_\ell(\vec{y}\, | \,\tilde{\vec{x}}) \,d \vec{y} \right) \left( \Pi_{k = 1}^J \Pi_{s = 1}^{\alpha_{\ell k}}  \rho^\epsilon_{k}(\tilde{x}_s^{(k)}, \tau)\right) \, d\tilde{\vec{x}}}\\
- \kappa_{\ell}\times\Pi_{k = 1}^{J} \Pi_{s = 1}^{\alpha_{\ell k}}  \rho_{k}(x, \tau)\bigg|\bigg|_{L^\infty}.
\end{multline}

From now on, and also in \cref{lem:estimates1} and \cref{lem:estimates2}, we will use the following notations for generic constants. Using the assumed uniform boundedness of the \textbf{MFM} and \textbf{SM} solutions, we have
$$C : = \paren{\max_{\ell = 1, \cdots, L} k_\ell}  \max_{j = 1, \cdots, J} \{\sup_{\epsilon > 0} \sup_{\tau\in[0, T_0]}||\rho^\epsilon_{j}(x, \tau)||_{L^\infty} \vee \sup_{\tau\in[0, T_0]} ||\rho_{j}(x, \tau)||_{L^\infty} \} < \infty.$$
\textbf{(Inequality 1)} in \cref{thm:regularityPIDE} and \cref{thm:regularityPDE} then give that
\begin{equation}\label{eq:firstDiffEst}
\max_{j = 1, \cdots, J} \{ \sup_{\epsilon > 0}\sup_{\tau\in[0, T_0]} ||\rho^\epsilon_{j}(x, \tau)||_{C^1_b(\R^d)} \vee \sup_{\tau\in[0, T_0]} ||\rho_{j}(x, \tau)||_{C^1_b(\R^d)}\} \leq C_1,
\end{equation}
for some constant $C_1$ depending only on $C$, $\sup_{j = 1, \cdots, J}||\rho_j(x, 0)||_{C^1_b(\R^d)}$ and $T_0$.
Similarly, \textbf{(Inequality 2)} in \cref{thm:regularityPIDE} and  \cref{thm:regularityPDE} give
\begin{equation}
\max_{j = 1, \cdots, J} \{ \sup_{\epsilon > 0}||\rho^\epsilon_{j}(x, \tau)||_{C^2_b(\R^d)} \vee ||\rho_{j}(x, \tau)||_{C^2_b(\R^d)}\} \leq C_2 + \frac{C_3}{\sqrt{\tau}}.
\end{equation}
Combined with \cref{eq:firstDiffEst}, we have that
\begin{equation}
\max_{j, k = 1, \cdots, J} \{ \sup_{\epsilon > 0}||\rho^\epsilon_{j}(x, \tau)\rho^\epsilon_{k}(x, \tau)||_{C^2_b(\R^d)} \vee ||\rho_{j}(x, \tau)\rho_{k}(x, \tau)||_{C^2_b(\R^d)}\} \leq C_2 + \frac{C_3}{\sqrt{\tau}},
\end{equation}
for any fixed $\tau\in(0, T_0]$ and some (other) constants $C_2$ and $C_3$ only depending on $C$, $\sup_{j = 1, \cdots, J}||\rho_j(x, 0)||_{C^1_b(\R^d)}$ and $T_0$.

Using the estimates from \cref{lem:estimates1} and \cref{lem:estimates2}, \cref{eq:L_infty_bound1_init} becomes
\begin{align}\label{eq:L_infty_bound}
||\rho_j(x, t) - \rho_j^\epsilon(x, t)  ||_{L^\infty} & \leq  || \rho_j(x, 0) - \rho_j^\epsilon(x, 0) ||_{L^\infty}\nonumber\\
&
\qquad+  \sum_{\ell = 1}^L  \int_0^t  6C\paren{\max_{j = 1, \cdots, J}||\rho_j(x, \tau) - \rho_j^\epsilon(x, \tau) ||_{L^\infty} + \paren{C_2 + \frac{C_3}{\sqrt{\tau}}}O(\epsilon^2)} d\tau, \nonumber\\
&
 \leq  || \rho_j(x, 0) - \rho_j^\epsilon(x, 0) ||_{L^\infty} + 6CL \paren{C_2t + 2C_3\sqrt{t}}O(\epsilon^2)\nonumber\\
&
\qquad +   6CL\int_0^t  \paren{\max_{j = 1, \cdots, J}||\rho_j(x, \tau) - \rho_j^\epsilon(x, \tau) ||_{L^\infty}} d\tau.
\end{align}
Applying Gronwall's Lemma we have
$$\max_{j = 1, \cdots, J}||\rho_j(x, t) - \rho_j^\epsilon(x, t)  ||_{L^\infty}\leq \left( 6CL \paren{C_2t + 2C_3\sqrt{t}}O(\epsilon^2)+ \max_{j =1, \cdots, J}|| \rho_j(x, 0) - \rho_j^\epsilon(x, 0) ||_{L^\infty}\right) e^{6CLt}$$
so that
$$\sup_{t\in [0, T_0]}\max_{j = 1, \cdots, J}||\rho_j(x, t) - \rho_j^\epsilon(x, t)  ||_{L^\infty}\leq  6CL \paren{C_2T_0 + 2C_3\sqrt{T_0}}e^{6CLT_0}O(\epsilon^2) .$$

\end{proof}


\subsection{Numerical Comparison for Reversible $A+B\rightleftarrows C$ Reaction}\label{SS:Numerics}

In order to illustrate \cref{thm:convTwoModels} and further investigate the connections between the \textbf{SM} and  the \textbf{MFM}, we numerically solved the reversible $A+B\rightleftarrows C$ reaction using each of the PBSRD model, the \textbf{SM}, and the \textbf{MFM}. The PDEs and PIDEs for the \textbf{SM} and \textbf{MFM} were solved in MATLAB with periodic boundary conditions on both the interval $[0, L]$, and the square $[0, L] \times [0, L]$. The stochastic process associated with the PBSRD model was numerically solved by discretization to a jump process via the Convergent Reaction Diffusion Master Equation (CRDME) \cite{I:2013, IZ:2018}, which was then sampled using the Gibson-Bruck stochastic simulation algorithm~\cite{GB:2000}. Due to the computational expense of the PBSRD model in the large-population limit, we only solved it for the one-dimensional problem with periodic boundary conditions on $[0, L]$.

Let us denote by $A(x, t), B(y, t), C(z, t)$ the concentration fields for species A, B, C respectively in the \textbf{SM}, \cref{Eq:density_formula_reversible_PDE}.

Similarly, we denote by $A_\epsilon(x, t), B_\epsilon(y, t), C_\epsilon(z, t)$ the concentration fields for species A, B, C respectively in the \textbf{MFM} \cref{Eq:density_formula_PIDEs_epsilon}, with separation scale parameter $\epsilon$. The latter then satisfy~\cref{Eq:density_formula_reversible_PIDE}.

In the following we fixed $L = 2\pi$, the diffusivity $D_1 = 1, D_2 = 0.5, D_3 = 0.1$, and assumed a detailed balanced condition on the reversible reactions, i.e.
\begin{equation} \label{eq:dbcondit}
K_d \hat{K}^\epsilon_1(x-y)m_1(z | x, y)  = K_2(z)m_{2}^\epsilon(x,y|z),
\end{equation}
where $K_d = k_2/k_1$ is the equilibrium dissociation constant of the reaction, see~\cite{IZ:2018}. We set $\kappa_1 = k_1 = ||\hat{K}^\epsilon_1||_{L^1(\R^d)} = 1$ and $\kappa_2 = k_2 = K_2(z) = 0.05$ for any $z\in\R^d$. We consider two reaction kernels: the Doi kernel
$$\hat{K}^\epsilon_1(w) = \frac{k_1}{|B_\epsilon(0)|}\mathbbm{1}_{\{|w|\leq \epsilon\}},$$
where $|B_\epsilon(0)|$ is the volume of d-dimensional ball with radius $\epsilon$, and the Gaussian kernel
$$\hat{K}^\epsilon_1(w) = \frac{k_1}{\paren{\sqrt{2\pi\epsilon^2}}^d} e^{-\frac{|w|^2}{2\epsilon^2}},$$ for any $w\in\R^d$.

The PDEs and PIDEs were solved using a Fourier collocation method \cite{HGG:2007}  for the spatial discretization, with collocation points $x_i = \frac{i L}{N}, i = 0, \cdots, N-1$ in 1d and $(x_i, y_j ) = (\frac{i L}{N}, \frac{j L}{N}), i, j = 0, \cdots, N-1$ in 2d. We chose $N = 2^9$ for 1d and $N = 2^8$ for 2d.  We approximated integral terms in the \textbf{MFM} using the midpoint quadrature rule centered at collocation points. For the reaction-diffusion equations the diffusion terms were stiff whereas the reaction terms were non-stiff. We therefore used the Crank-Nicolson Adams-Bashforth (CNAB) implicit-explicit method (IMEX) to discretize in time the spatially discretized system, with a time step of $\Delta t = 1e-3$ (for stability and accuracy reasons). As CNAB is a two-step multistep method, to obtain the numerical solution at time $\Delta t$ we applied the one-step IMEX Forward-Backward Euler method with a time step of $\paren{\Delta t}^2$ until time $\Delta t$.

\FloatBarrier

\subsubsection{One Dimensional Results}
In the one dimensional periodic domain $[0, L]$, we set the initial conditions for both models to be $A(x, 0) = e^{-10(x-1)^2}$, $B(y, 0) = e^{-10(y-2)^2}$ and $C(z, 0) = 0$.

\begin{figure*}[tb]
  \centering
  \subfloat[]{
    \label{fig:mass1d-a}
    {\includegraphics[width=0.35\textwidth]{./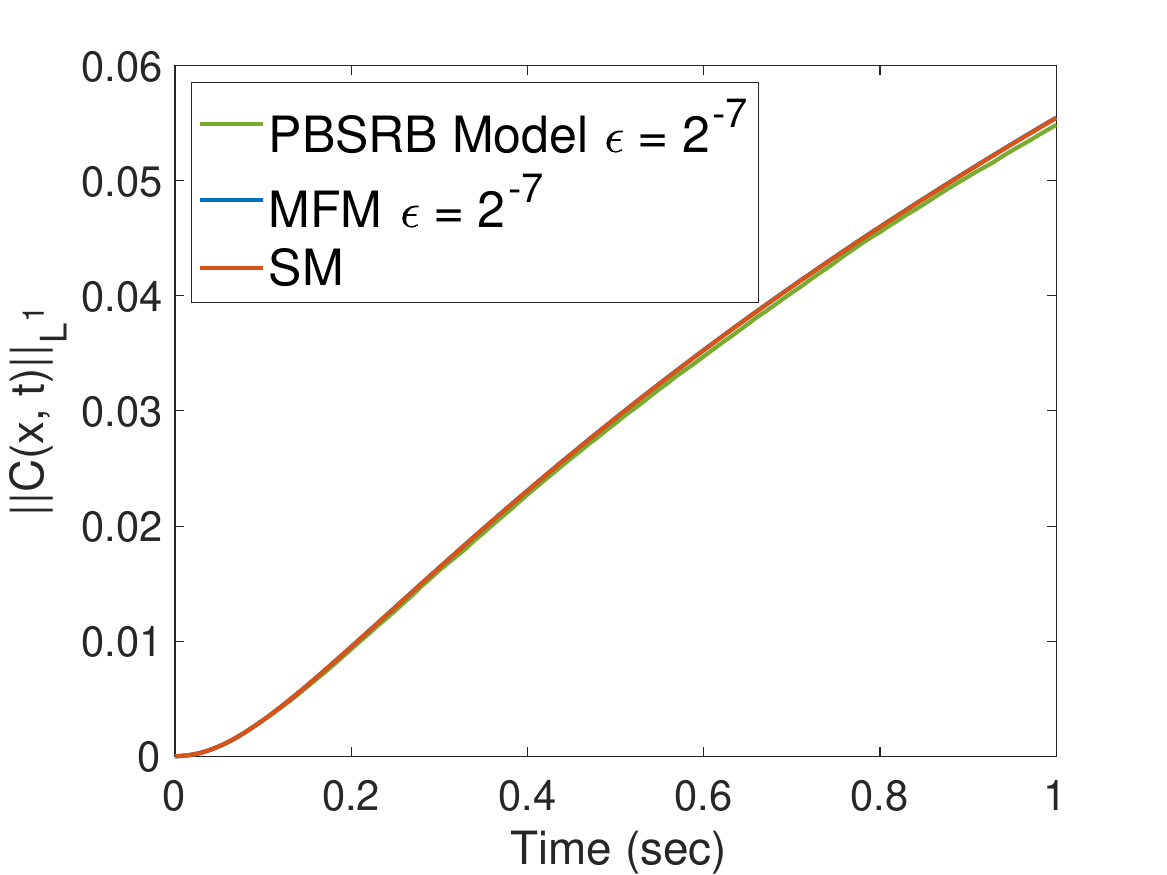}} }
  \subfloat[]{
    \label{fig:mass1d-b}
    {\includegraphics[width=0.35\textwidth]{./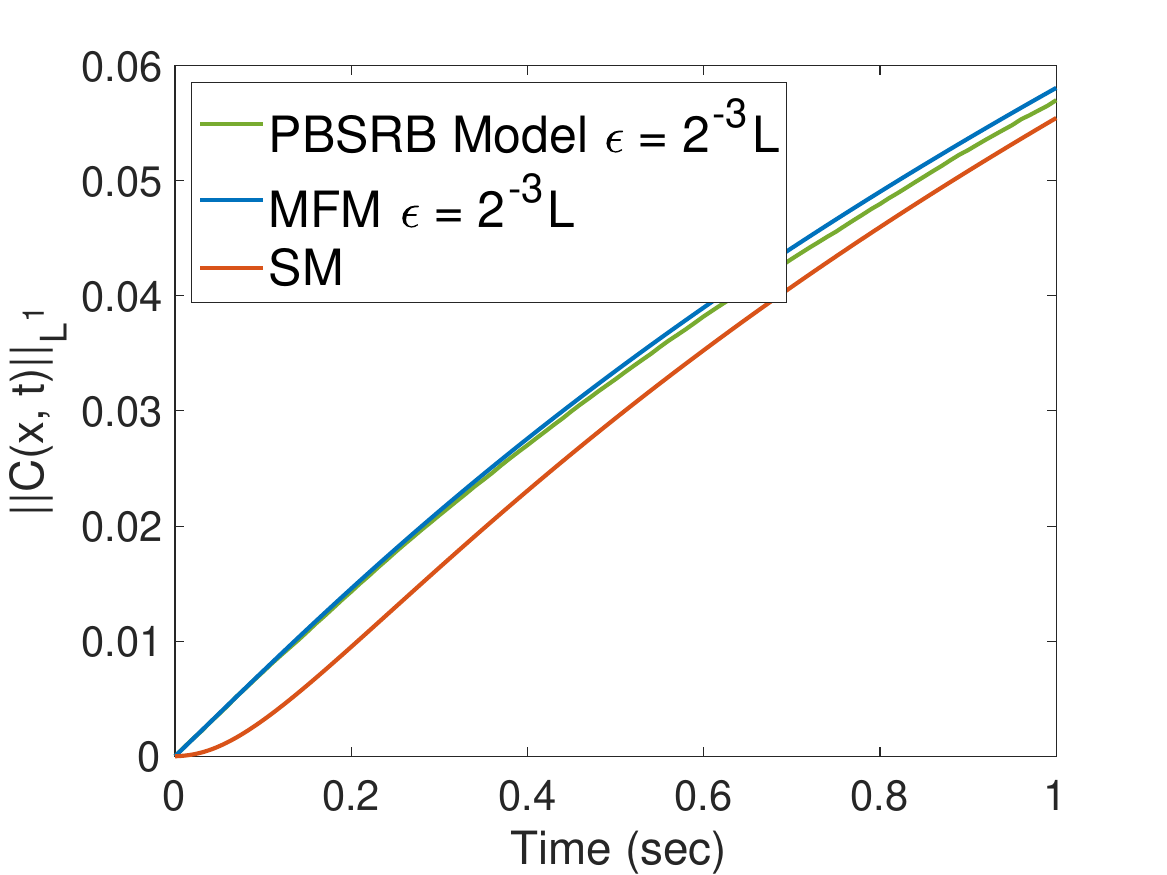}}}
  \caption{ Evolution of the Molar Mass in one dimension estimated by 100 simulations with $\gamma = 10^4$ and   \textbf{(a)}  reaction radius $\epsilon = 2^{-7}$ , \textbf{(b)}   reaction radius $\epsilon = 2^{-4}*L$.}
  \label{fig:mass1d}
\end{figure*}
\begin{figure*}[thb]
  \centering
  \subfloat[]{
    \label{fig:dist1d-a}
    {\includegraphics[width=0.35\textwidth]{./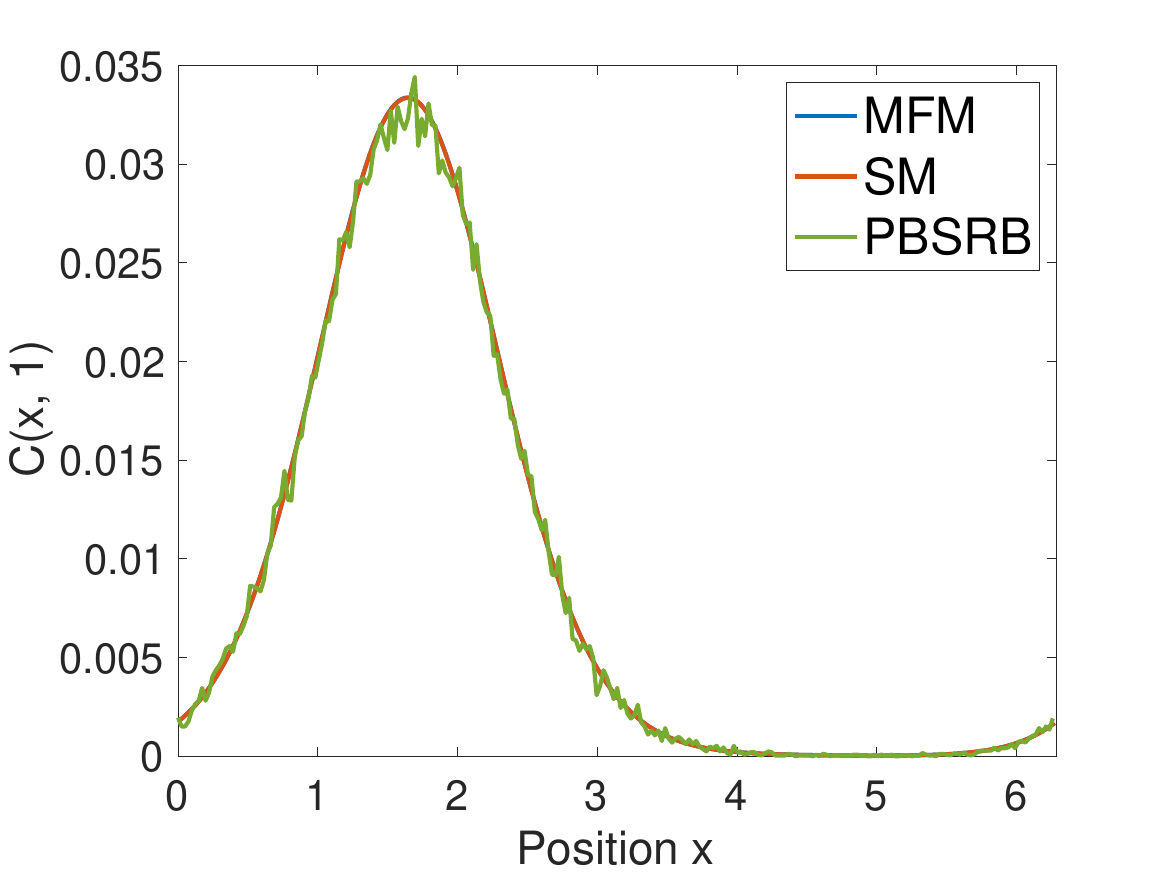}} }
  \subfloat[]{
    \label{fig:dist1d-b}
    {\includegraphics[width=0.35\textwidth]{./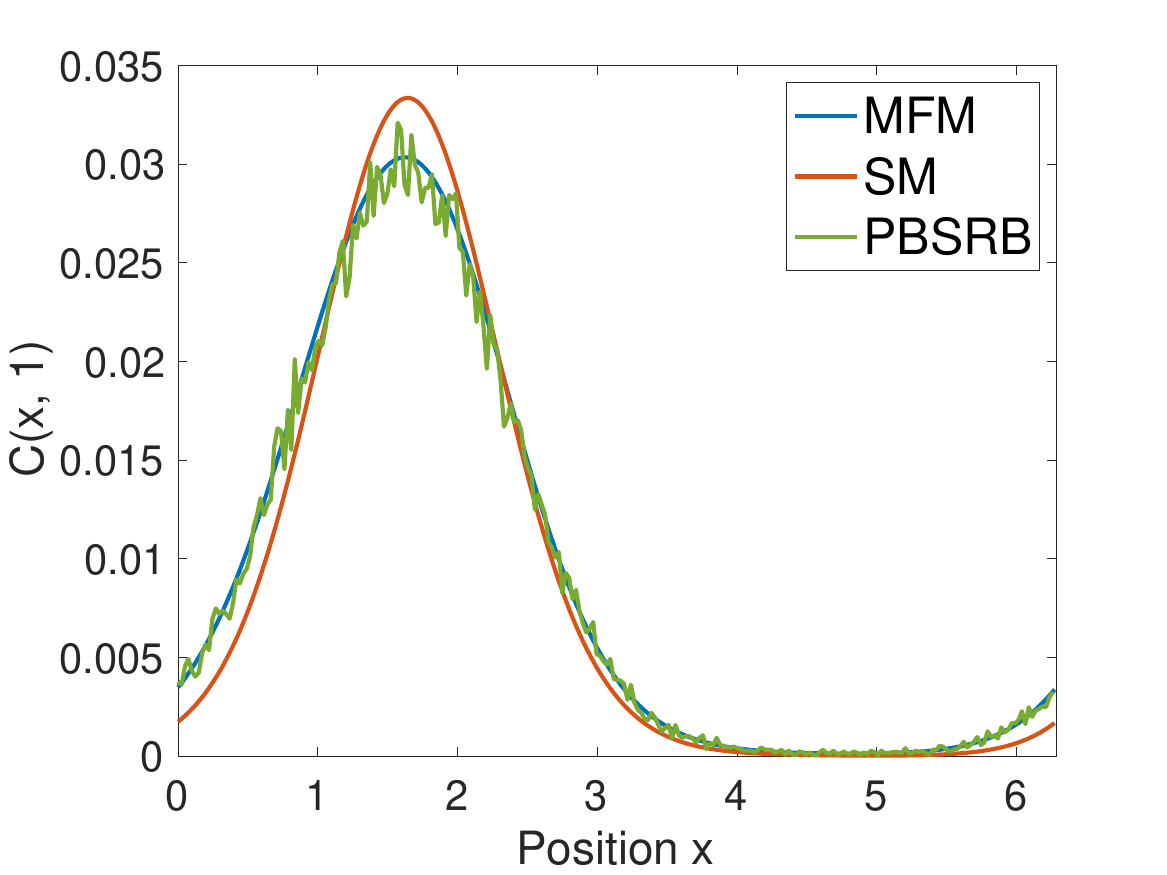}}}
  \caption{ Spatial Molar Concentration at Time 1 in one dimension estimated by 100 simulations \textbf{(a)} with reaction radius $\epsilon = 2^{-7}$  , \textbf{(b)}  with reaction radius $\epsilon = 2^{-4}*L$.}
  \label{fig:dist1d}
\end{figure*}
We first examine the relationship between the PBSRD model, the \textbf{MFM} and the \textbf{SM} as shown in \cref{fig:mass1d,fig:dist1d}. It is clear that the  molar concentration fields and molar masses in the \textbf{MFM} are a good approximation of the PBSRD model for $\gamma$, the large-population limit scaling parameter in the PBSRD model \cite{IMS:2020}, sufficiently large. For $\epsilon$ sufficiently small both the \textbf{SM}  and \textbf{MFM} are good approximations, while for $\epsilon$ sufficiently large, only the \textbf{MFM} is a good approximation to the PBSRD model.

\begin{figure*}[tb]
  \centering
  \subfloat[]{
    \label{fig:convRate1d-a}
    {\includegraphics[width=0.35\textwidth]{./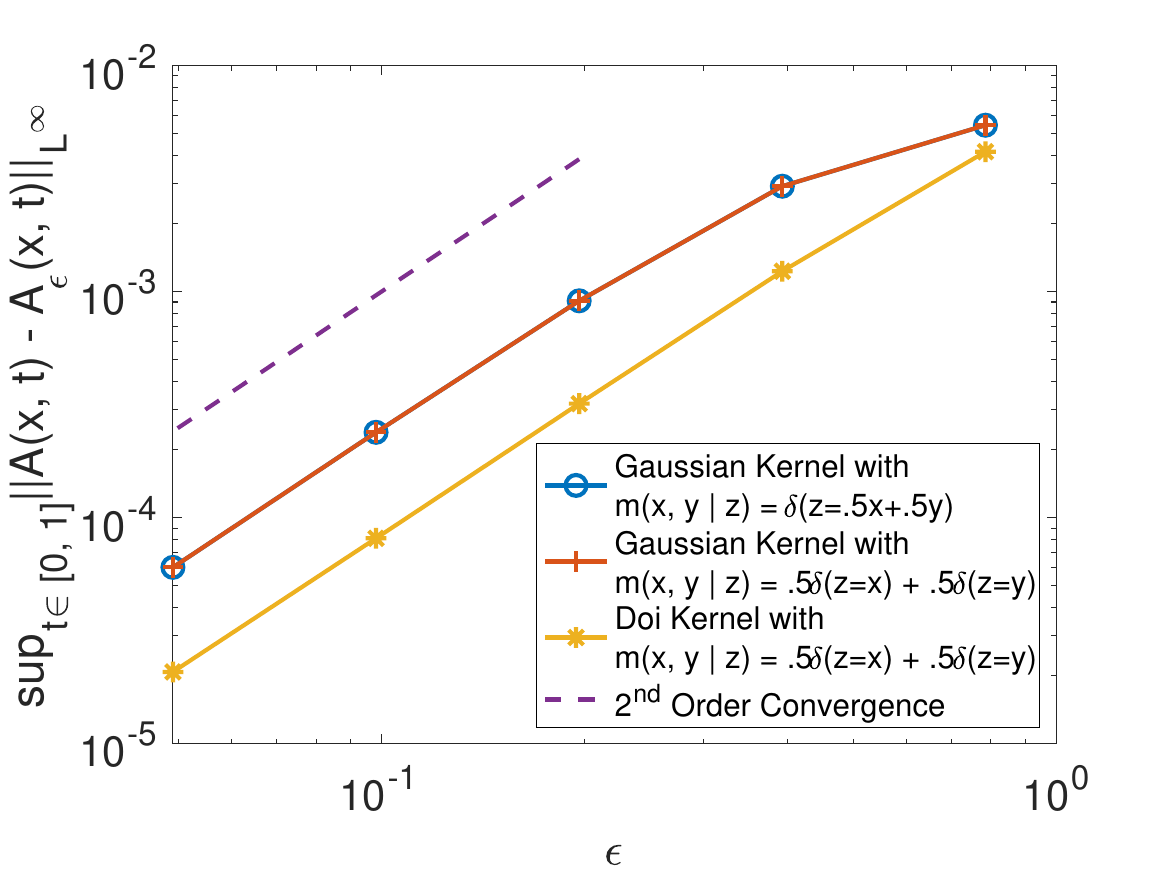}} }
  \subfloat[]{
    \label{fig:convRate1d-b}
    {\includegraphics[width=0.35\textwidth]{./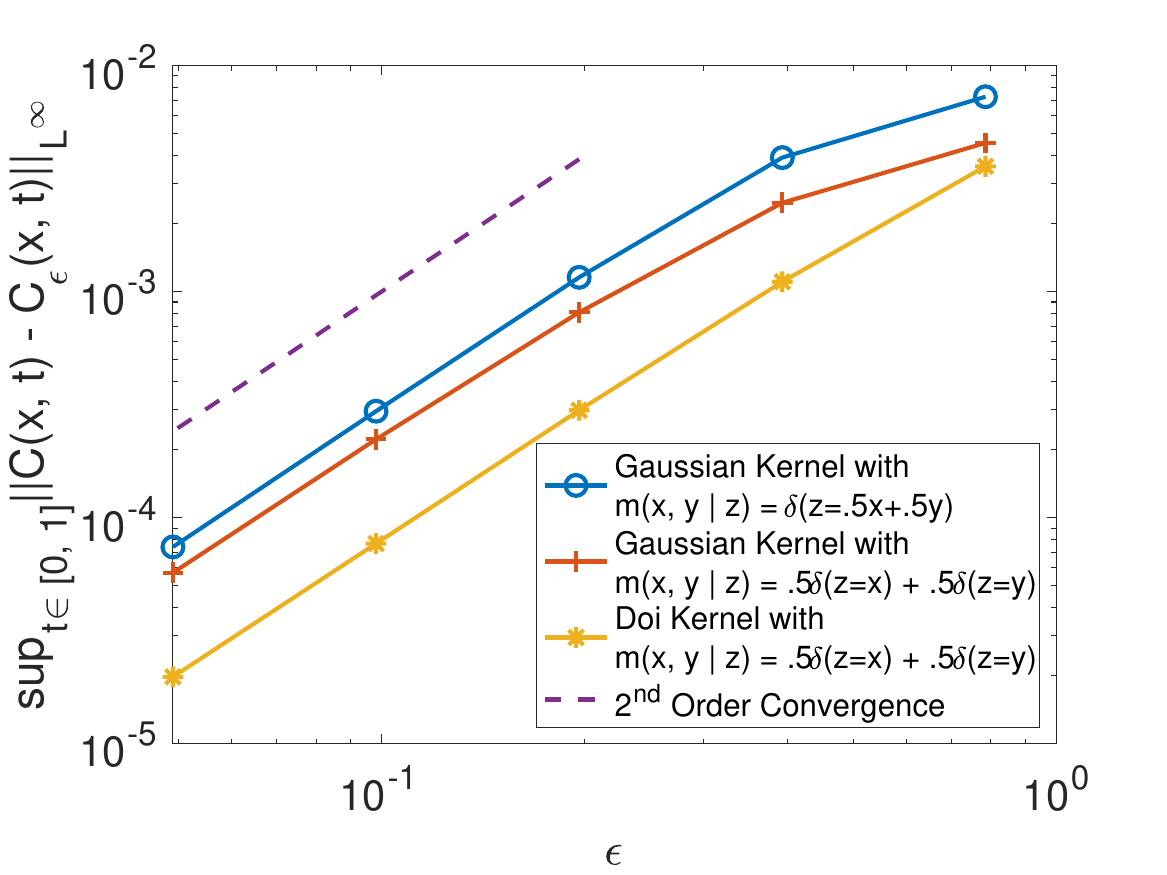}}}
  \caption{ One Dimensional Uniform Convergence of the spatial density in the time interval [0, 1] \textbf{(a)}  for species A , \textbf{(b)} for species C.}
  \label{fig:convRate1d}
\end{figure*}
We next compared the \textbf{SM} to the \textbf{MFM} with various combinations of reaction kernels and displacement measures. In particular, we investigated the \textbf{MFM} with (1) Gaussian kernel and placement measure  $m_1(z | x, y) = \delta(z - (.5x + .5y))$, (2) Gaussian kernel and placement measure $m_1(z | x, y) = .5 \delta(z - x) + .5 \delta(z - y)$,  (3) Doi kernel and placement measure $m_1(z | x, y) = .5 \delta(z - x) + .5 \delta(z - y)$. For all these choices of the reaction kernel and placement measure, the \textbf{MFM} PIDE solution converges to the \textbf{SM} PDE solution at second order in $\epsilon$ as shown in \cref{fig:convRate1d}. This illustrates our rigorous results on the convergence proven in  \cref{thm:convTwoModels}.

\subsubsection{Two Dimensional Results}
We further tested the convergence over a two dimensional periodic domain, comparing the \textbf{SM} PDE solution to the \textbf{MFM} PIDE solution for a representative Gaussian reaction kernel and placement measure $m_1(x, y | z) = .5 \delta(z - x) + .5 \delta(z - y)$. Here we fix the initial solutions for both models to be $A(x, 0) = e^{-12(x_1-1)^2-8(x_2-2)^2}$, $B(x, 0) = e^{-10(x_1-1)^2-5(x_2-2)^2}$ and $C(x, 0) = 0$, where $x = (x_1, x_2) \in \R^2$.

\begin{figure*}[tb]
  \centering
  \subfloat[]{
    \label{fig:2a}
    {\includegraphics[width=0.3\textwidth]{./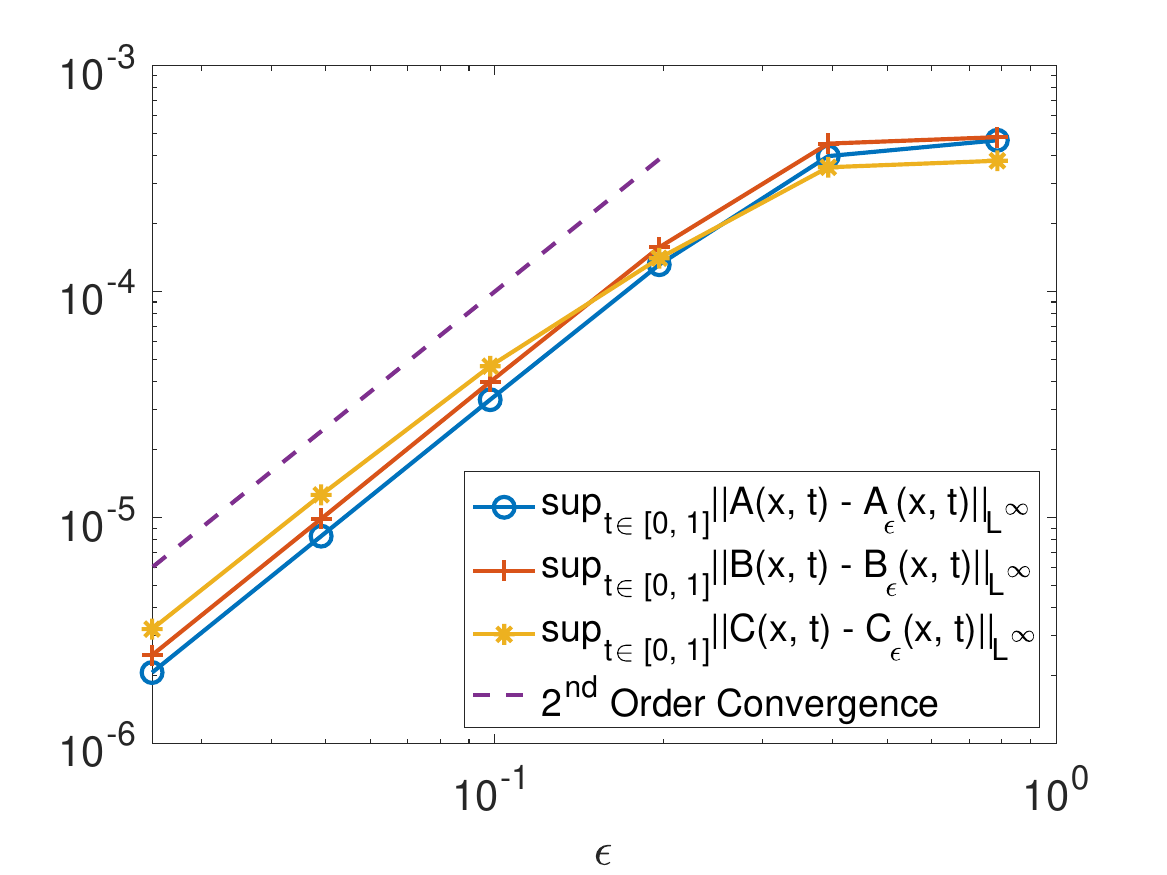}} }
  \subfloat[]{
    \label{fig:2b}
    {\includegraphics[width=0.3\textwidth]{./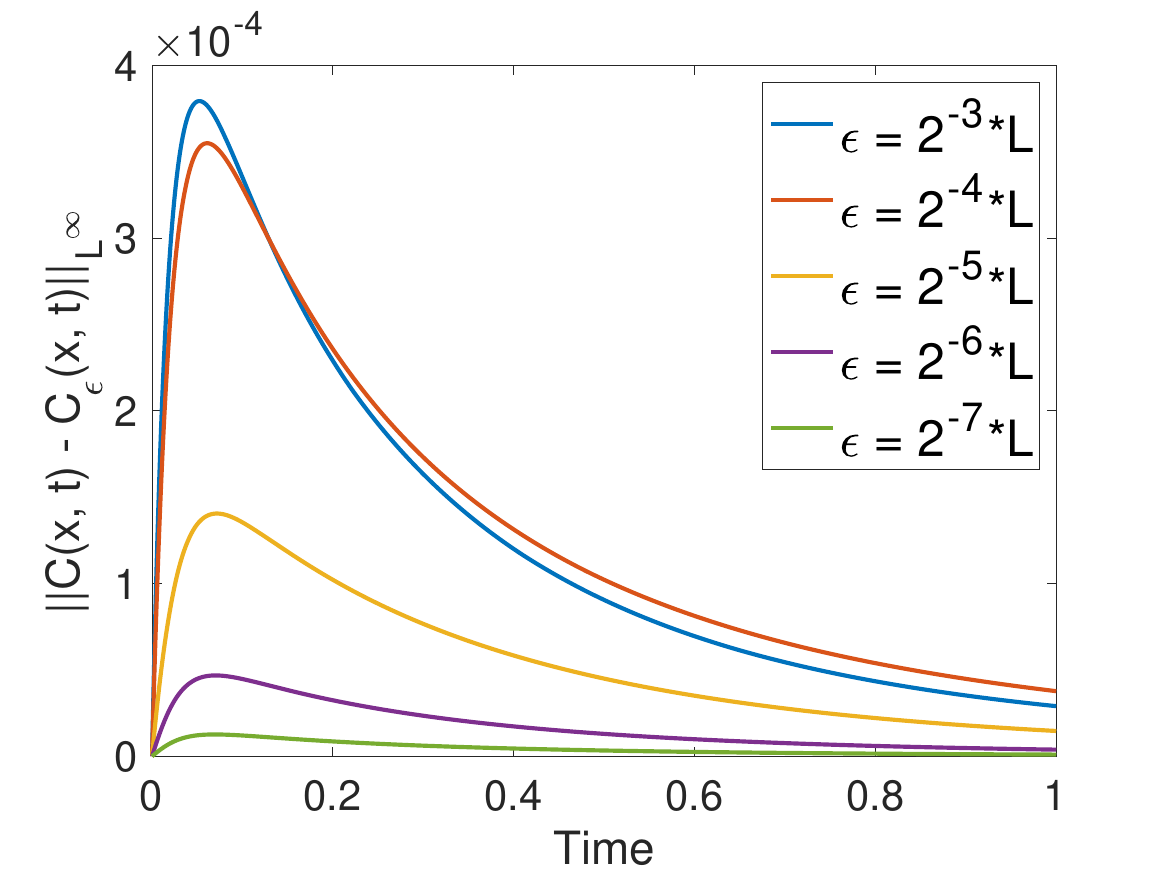}}}
  \subfloat[]{
    \label{fig:2c}
    {\includegraphics[width=0.3\textwidth]{./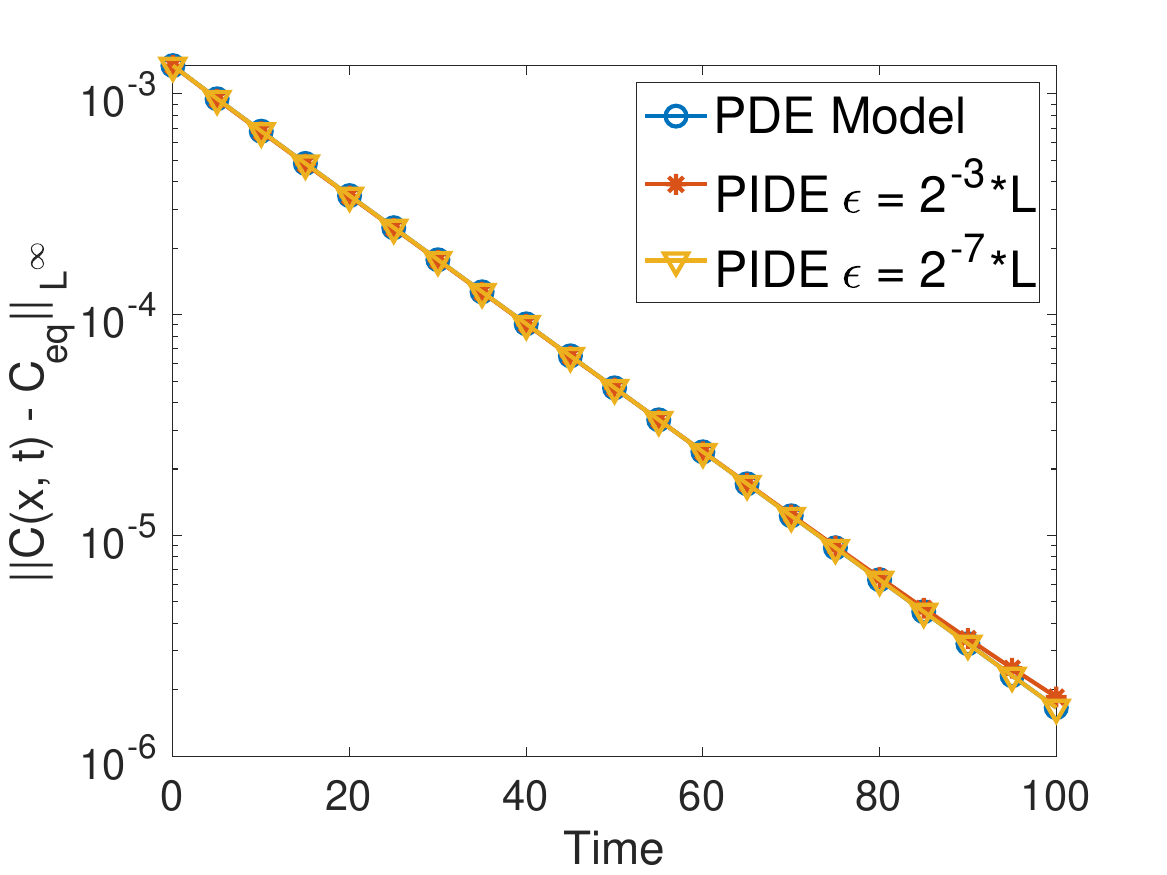}} }
  \caption{ Two Dimensional Reversible Reactions. \textbf{(a)} Convergence of the spatial density uniformly in space and time interval $[0, 1]$ for species A, B, and C. \textbf{(b)} The error between the PDE solution and the PIDE solution versus time for species C. \textbf{(c)} Convergence to the same constant equilibrium denoted as $C_{eq}$ for the PDE and MFM.}
  \label{fig:convRate2d}
\end{figure*}

Uniform second order convergence in space and time is again verified as shown in \cref{fig:2a}. The error between the two models versus time is illustrated in \cref{fig:2b}. We see that the maximum error over space increases to a maximum and then decreases as $t$ increases for each value of $\epsilon$. The smaller $\epsilon$ is, the smaller the error between the solutions of the \textbf{SM} and the \textbf{MFM}. The error decreases for large times because both models converge to the same spatially uniform equilibrium solution (exponentially fast) as $t \to \infty$, as shown in Figure \cref{fig:2c}. Here the equilibrium constant can be calculated from the corresponding mass action ODE model for the reaction using conservation laws for the reversible reaction. Let $A_{eq}, B_{eq}, C_{eq}$ denote the equilibrium concentration for species A, B and C respectively in the ODE model. They satisfy $\Kd C_{eq} = A_{eq}B_{eq}$. Let $A_0, B_0, C_0$ denote the averaged spatial density at $t=0$ in the \textbf{MFM} and \textbf{SM}, and assume these are the initial conditions used in the ODE model.  In the ODE model we have that  $A_{eq} + B_{eq} + 2C_{eq} = A_0 + B_0 + 2C_0 := sum$ and $B_{eq} - A_{eq} = B_0 - A_0 : = diff $. Solving these three equations we  obtain the equilibrium concentration $C_{eq} = \frac{1}{2}(sum + \Kd - \sqrt{(sum+ \Kd)^2 - (sum^2 - diff^2)})$.

\begin{figure*}[tb]
  \centering
  \subfloat[\textbf{SM} at Time 0.1]{
    \label{fig:3a}
    {\includegraphics[width=0.3\textwidth]{./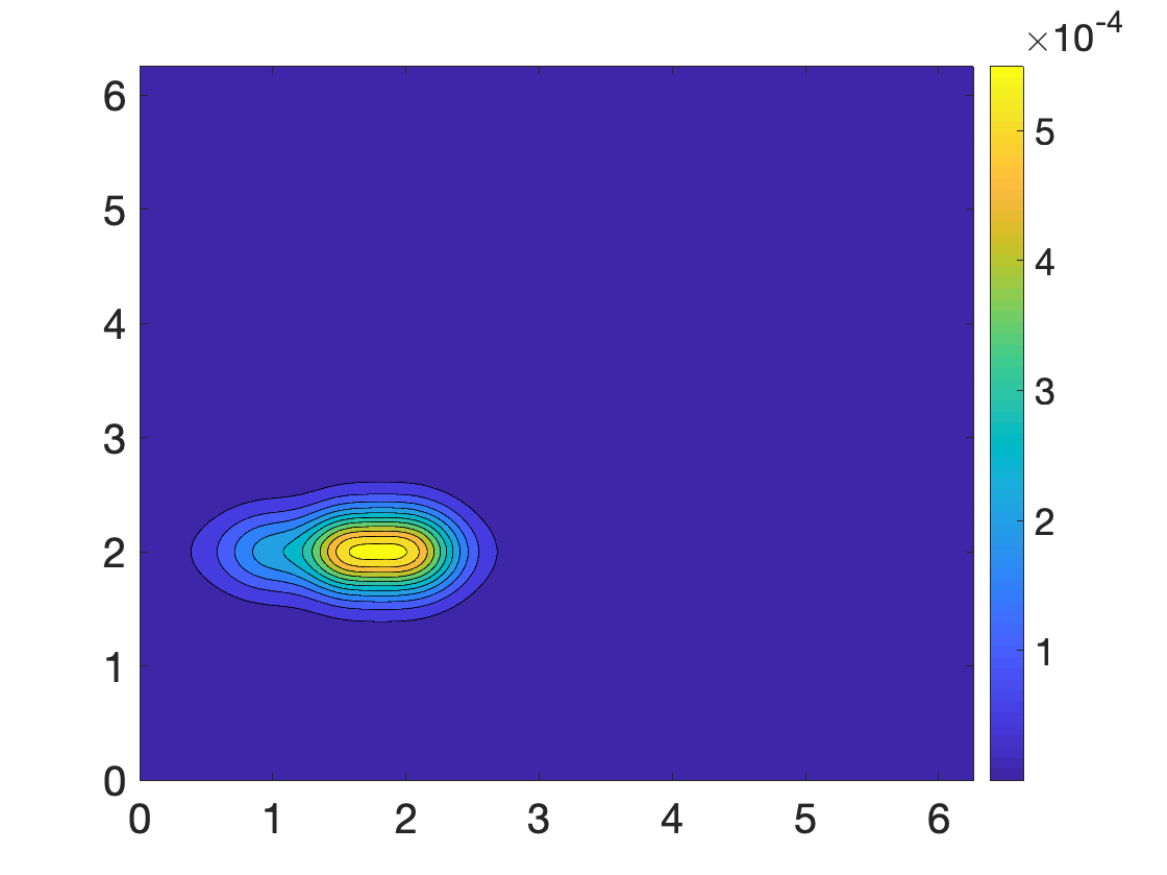}} }
  \subfloat[\textbf{MFM} with $\epsilon = 2^{-3}*L$ at Time 0.1]{
    \label{fig:3b}
    {\includegraphics[width=0.3\textwidth]{./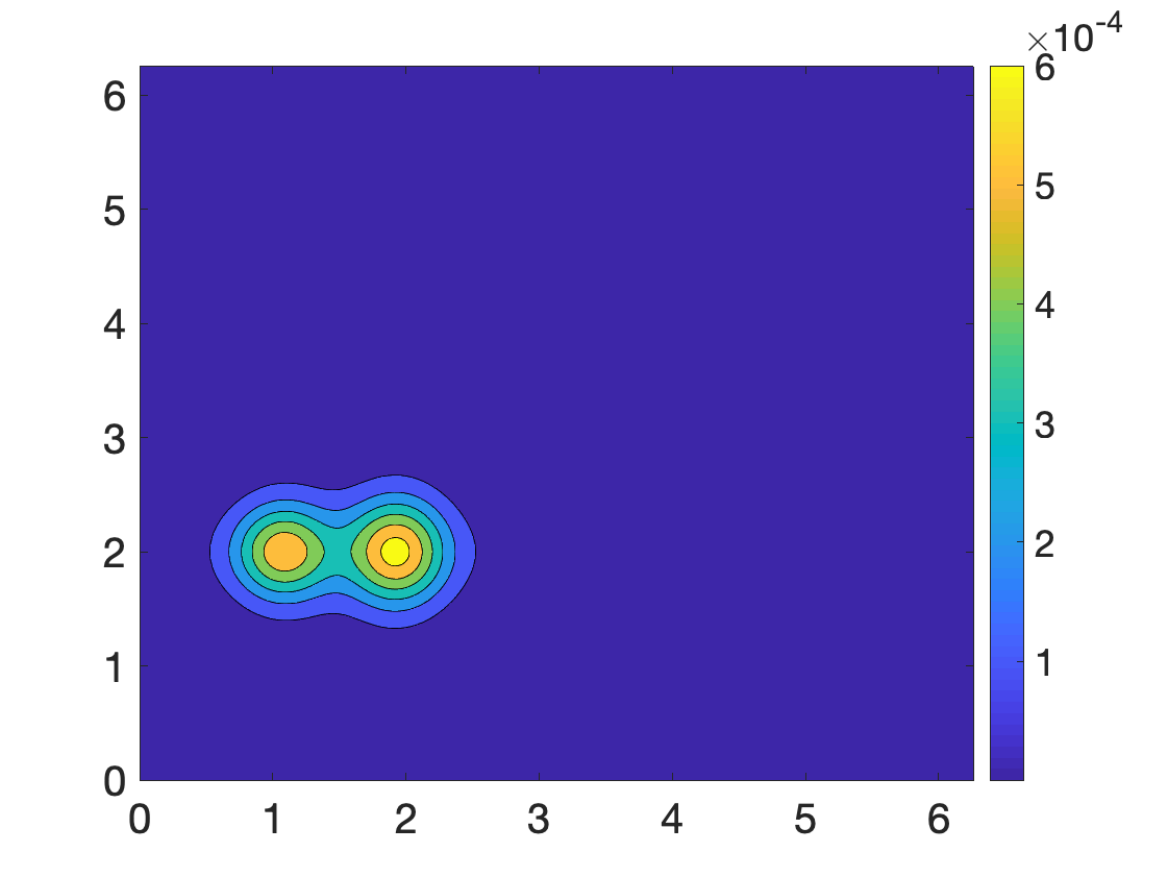}}}
  \subfloat[\textbf{MFM} with $\epsilon = 2^{-7}*L$ at Time 0.1]{
    \label{fig:3c}
    {\includegraphics[width=0.3\textwidth]{./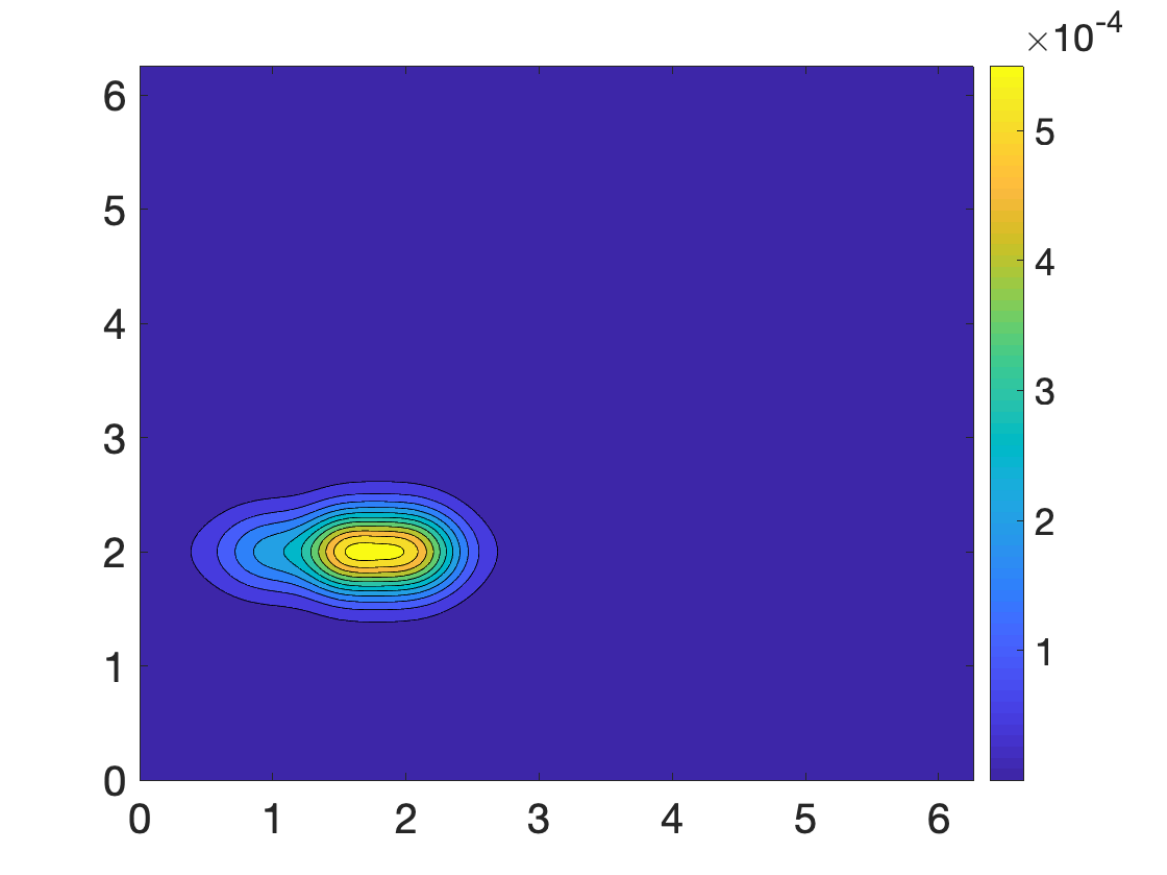}} }

\vskip\baselineskip

    \subfloat[\textbf{SM} at Time 1]{
    \label{fig:3d}
    {\includegraphics[width=0.3\textwidth]{./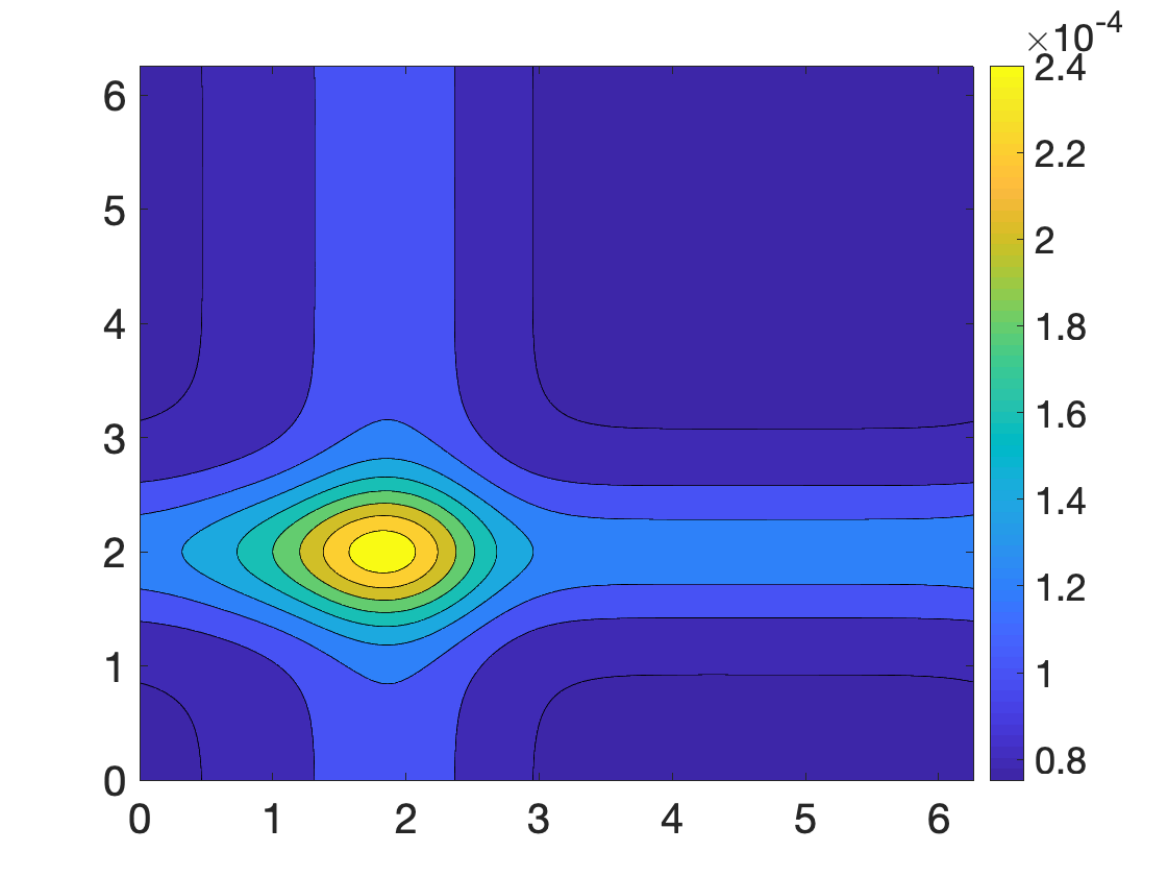}} }
  \subfloat[\textbf{MFM} with $\epsilon = 2^{-3}*L$ at Time 1]{
    \label{fig:3e}
    {\includegraphics[width=0.3\textwidth]{./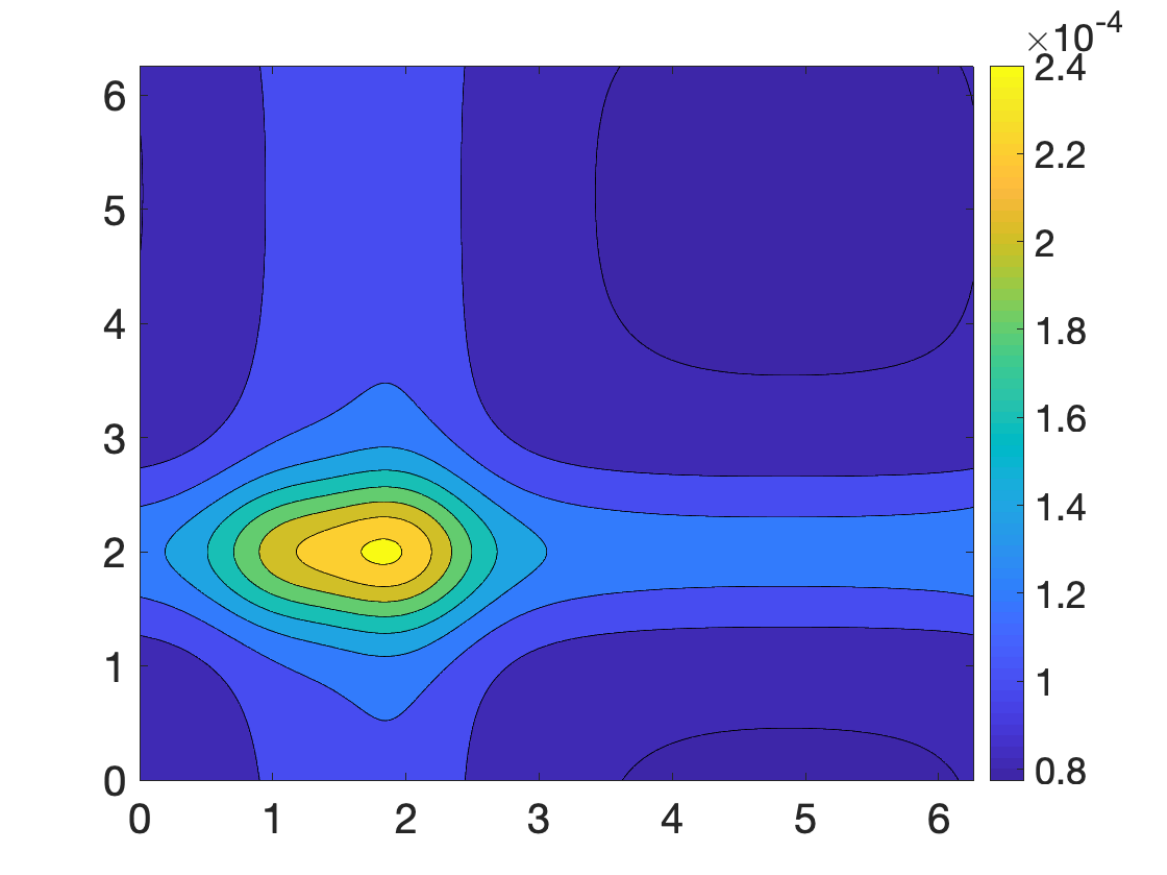}}}
  \subfloat[\textbf{MFM} with $\epsilon = 2^{-7}*L$ at Time 1]{
    \label{fig:3f}
    {\includegraphics[width=0.3\textwidth]{./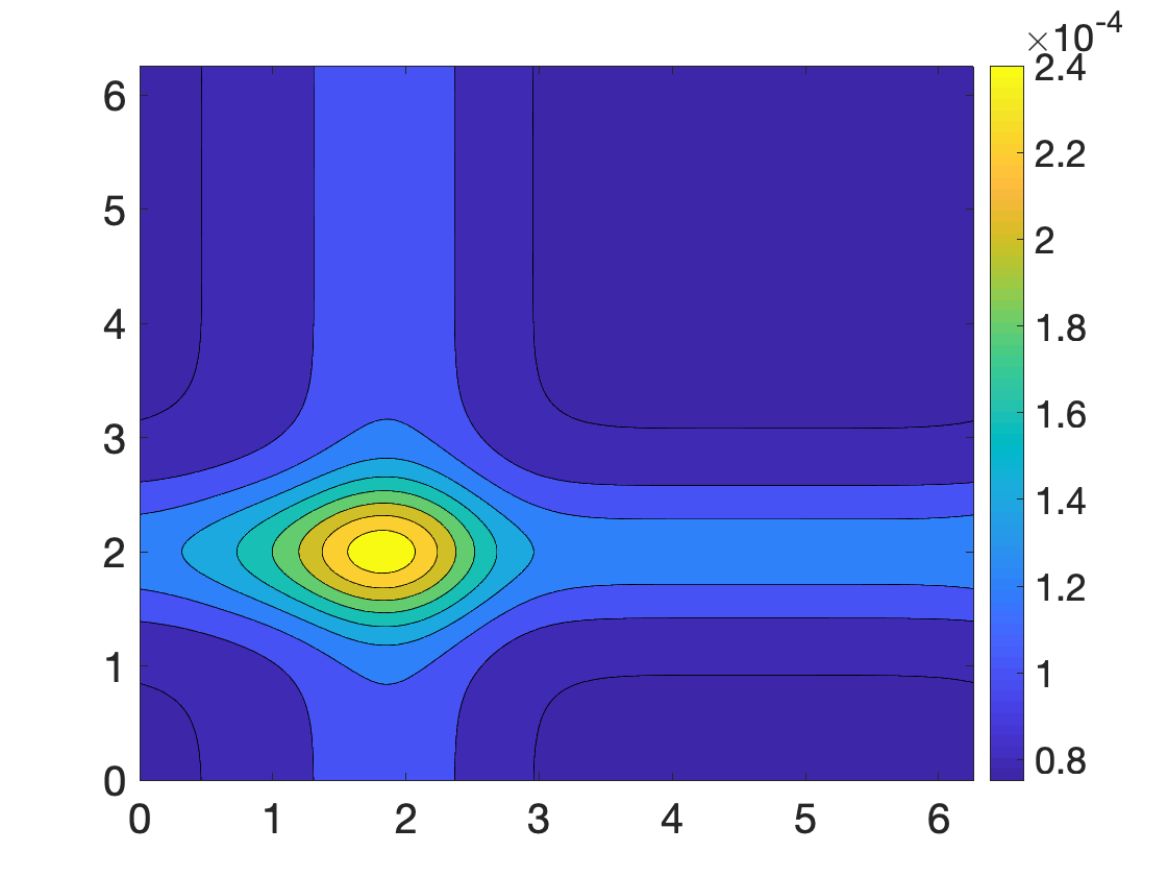}} }

  \caption{Two dimensional spatial profile for species C. }

  \label{fig:spatialProfile}
\end{figure*}
We remark that even though the dynamics of the \textbf{SM} and \textbf{MFM} are different, as long as we choose $\epsilon$ sufficiently small, in particular $\epsilon = 2^{-7}L \approx .8\% L$, there is no apparent visual difference between the two models, as illustrated in \cref{fig:spatialProfile}.

\subsection{Examples of disagreement between the PBSRD, MFM and SM models.}\label{SS:NumericsBeyond}
\commentab{
Our previous examples illustrated our main result, exploring regimes where the MFM converges to the SM as $\epsilon \to 0$. This may suggest that one can always use the SM in applications where a deterministic model is sufficient. We now illustrate two contexts where the SM is problematic as an approximation to average concentration fields in the PBSRD model, while the MFM captures key aspects of their behavior. In \cref{SS:BPM} we consider a pattern formation example, the Baras-Pearson-Mansour (BPM) Model, and show the \textbf{MFM}  is able to approximate the steady state statistics of CRDME simulations of the volume-reactivity PBSRD model. In contrast, the \textbf{SM} appears to converge to different steady-state statistics. In \cref{SS:CD28}, we study a simplified version of a model for regulation of T cell signaling from~\cite{ZI:2020}. We demonstrate that the \textbf{MFM} is able to show the same qualitative switch-like behavior at steady state as CRDME simulations of the volume-reactivity PBSRD model in~\cite{ZI:2020}, whereas it is not immediately clear what an appropriate SM to use for this problem would be. 

\subsubsection{Baras-Pearson-Mansour (BPM) Model}\label{SS:BPM}

We consider a reaction-diffusion system with three species, U, V and W undergoing the following reactions \cref{eq:BPMReactions} called the Baras-Pearson-Mansour (BPM) Model~\cite{baras1996microscopic, baras1990microscopic},
\begin{equation}\label{eq:BPMReactions}
U+W \overset{\eta_1}{\rightarrow} V+W,\qquad  2V \underset{\eta_3}{\overset{\eta_2}{\rightleftarrows}} W,\quad U\underset{\eta_5}{\overset{\eta_4}{\rightleftarrows}} \emptyset, \qquad V \underset{\eta_7}{\overset{\eta_6}{\rightleftarrows}}  \emptyset.
\end{equation}

We use the parameters provided in \cite{kim2017stochastic} for a reaction-limited system, and fix the spatial domain as a $32 \mu m \times32 \mu m$ square with periodic boundary conditions. The diffusivities are $D_V = D_W = D_U/10 = 0.01 \mu m^2 / sec$ with rate constants $\eta_1 = \eta_2 = 2\times 10^{-4} \mu m^2 / sec$, $\eta_3 = 1.0 sec^{-1}$, $\eta_4 = 3.33\times 10^{-3} sec^{-1}$, $\eta_5 = 16.7 \mu m^{-2} sec^{-1}$, $\eta_6 = 3.67\times 10^{-2} sec^{-1}$ and $\eta_7 = 4.44 \mu m^{-2} sec^{-1}$. For the CRDME and \textbf{MFM}, we consider two bimolecular interaction length scales, $\epsilon = 0.05 \mu m$ and $\epsilon = 0.025 \mu m$ (kept the same in the two bimolecular reactions of the system). The corresponding particle-level Gaussian kernels' rates $k_{i}$ ($i = 1,2$), for a reaction-limited system, are calibrated by imposing that 
\begin{equation}
\eta_i = \frac{1}{|\Omega|} \int_{\Omega^{2}}  \frac{k_i}{\paren{\sqrt{2\pi\epsilon^2}}^d} e^{-\frac{|\vx-\vy|^2}{2\epsilon^2}}\, d{\vx}\, d{\vy},
\end{equation}
where $\Omega$ denotes the spatial domain. This corresponds to matching the well-mixed reaction-rate constant in the (formal) infinite diffusivity limit. The product placement rule for the $U+W \rightarrow V+W$ reaction is to place V at the position of U. The placement rule for the $2V \rightarrow W$ reaction is to place W with equal probability at the position of the first V or the second V. For the reverse $W \rightarrow 2V$ reaction one V is placed at the position of W, while the position of the other is determined so as to ensure detailed balance of the reversible reaction, see~\cite{IZ:2018}.

We denote the spatial-average of the average number density fields for species U, V, W at time $t$ as $n_U(t), n_V(t), n_W(t)$ respectively. We initiate the system randomly around a point on the limit cycle, $(n_U(0), n_V(0), n_W(0)) = (1686,\, 534, \, 56) \, \mu m^{-2}$. More precisely,  we generate the spatially inhomogenous initial particle number for species $s$ in voxel $V_i$ from a Poisson distribution with mean $n_s(0)\times|V_i|$ following \cite{kim2017stochastic} and use the same initial number density for all the models considered.  We use the same Fourier collocation method as in \cref{SS:Numerics} to solve for the \textbf{SM} and \textbf{MFM}, choosing a time step of $dt = 0.1\, sec$ and using $N = 100$ points per coordinate axis. The CRDME is used for simulation of the underlying particle model, using the same underlying mesh. 

\cref{fig:bpm_density} demonstrates that for all the models with $\epsilon$ sufficiently small, the short-time spatially-averaged average number densities for species U agree as we proved in \cref{thm:convTwoModels}. At intermediate times, the stochastic reaction mechanism in the particle model facilitates faster relaxation, with smaller reactive length scales giving faster relaxation for both the CRDME and \textbf{MFM}. This is consistent with the observations in Figure 5 of \cite{donev2018efficient}. Neither the \textbf{MFM} or the \textbf{SM} give good approximations to the relaxation timescales in the CRDME simulations, though the disagreement of the \textbf{SM} appears less than the \textbf{MFM} for the two values of $\epsilon$ shown in the figure. In contrast, for very long times the \textbf{MFM} demonstrates better agreement with the limiting steady state value from the CRDME simulations for each value of $\epsilon$, while the \textbf{SM} shows a clear discrepancy (see the right panel of \cref{fig:bpm_density}). We note that it is an open question to study how the long time behaviors of the \textbf{MFM}, \textbf{SM} and CRDME relate to each other in complex biochemical systems. We hope to explore how the bimolecular reaction range and system parameters affect the steady state pattern formations and macroscopic observables from the three models in future work.

\begin{figure*}[tb]
  \centering
  \subfloat[]{
    \label{fig:bpm_a}
    {\includegraphics[width=0.5\textwidth]{./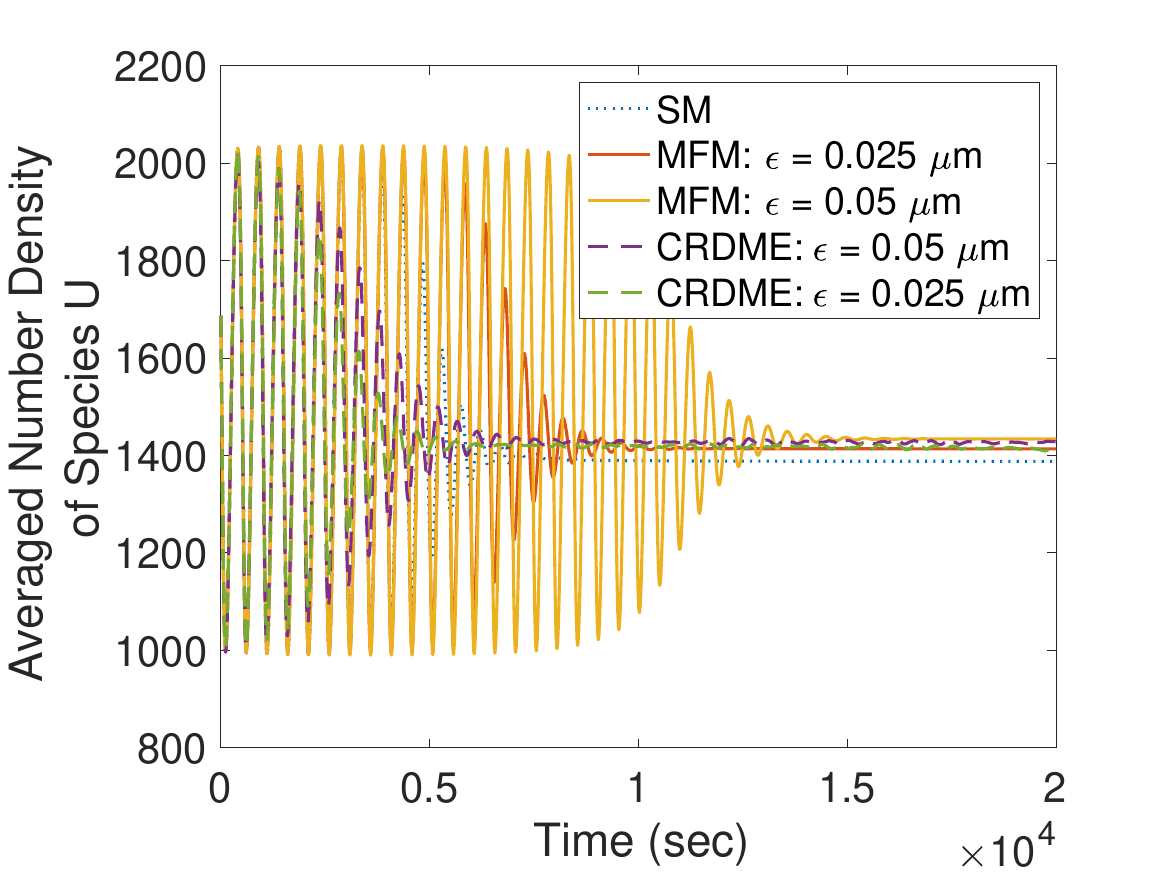}} }
  \subfloat[]{
    \label{fig:bpm_b}
    {\includegraphics[width=0.5\textwidth]{./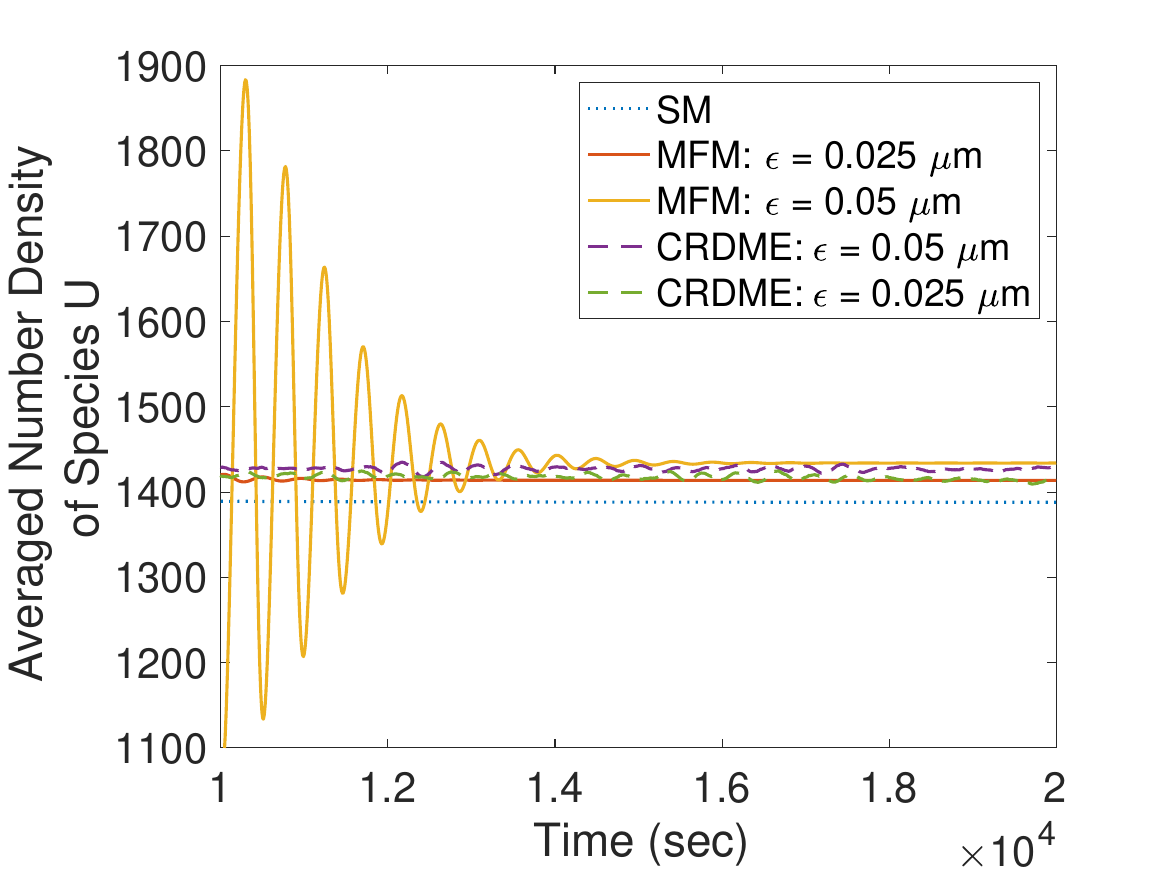}}}
  \caption{ Spatially-averaged average number density of species U versus time (sec) from \textbf{SM} model, \textbf{MFM} model and CRDME model up to time 20000 sec. The results of the CRDME are averaged ovder 10 simulations. (b) is a zoomed in version of (a) focusing on the long-time behavior.}
  \label{fig:bpm_density}
\end{figure*}

\FloatBarrier
\subsubsection{Tethered Surface Receptor Interactions in T Cell Signaling}\label{SS:CD28}
We now consider a simplified model for a tethered surface receptor interactions that occur in T cell signaling (T cells play a key role in the adaptive immune response). The example illustrates a case where the \textbf{MFM} is well-defined, and captures key qualitative behavior of the underlying particle model, but it is not immediately clear how to formulate an appropriate \textbf{SM} (the assumptions of~\cref{thm:convTwoModels} are violated). This demonstrates that while in many contexts the \textbf{SM} and \textbf{MFM} agree well for physically-appropriate parameters, there are cases where the \textbf{MFM} itself serves as a useful model for biological systems.

Surface receptors within the cell membrane often have cytosolic tails that contain docking cites for cytosolic enzymes (i.e. enzymes diffusing within the cell), and regulatory sites that can be modulated by such enzymes. The length and stiffness properties of these tails then define an effective interaction distance for bimolecular reactions involving such receptors, called the molecular reach of the reaction~\cite{ZI:2020}. Enzymes attached to binding sites on tails interact with regulatory sites on nearby tails within the three-dimensional volume proximal to the cell membrane. In contrast, the receptors to which the tails are attached diffuse within the two-dimensional membrane surface. We therefore obtain a reaction-diffusion process of particles (receptors) moving in a two-dimensional domain but reacting through three-dimensional reaction kernels.

In~\cite{ZI:2020}, we investigated a tethered signaling reaction in which surface membrane PD-1 proteins could inhibit activated CD-28 surface receptors, a key component in sustaining T cell signaling responses. We explored how the size of the molecular reach (i.e. bimolecular interaction distance) and diffusivity of the receptors could influence the efficacy of CD-28 inhibition by PD-1. Letting CD28 denote the inactivated (i.e. unphosphorylated) state, and CD28* the activated (i.e. phosphorylated) state, our model had the basic reactions that
\begin{equation}\label{eq:BPMReactions}
\text{CD28} \xrightarrow{\lambda} \text{CD28*},\qquad  \text{CD28*} + \text{PD-1} \xrightarrow{\hat{K}_{2.5D}^{{\epsilon}}(\cdot) \text{ or } \hat{K}_{2D}^{{\epsilon}}(\cdot)} \text{CD28} + \text{PD-1}.
\end{equation}
Here CD28 activation (phosphorylation) follows a first order reaction with rate $\lambda$. Inactivation (dephosphorylation) of CD28* is controlled by PD-1, and modeled by a second order tethered reaction with bimolecular reaction kernel \cref{eq:kernal2_5D}. It depends on the molecular reach $\epsilon$ derived from a polymer model for the cytoplasmic tail of the protein~\cite{ZI:2020}. The bimolecular reaction kernel is given by
\begin{equation}\label{eq:kernal2_5D}
\hat{K}_{2.5D}^{{\epsilon}}(x) = k_{2.5D}\times \paren{\frac{1}{2\pi \epsilon^2}}^{3/2} \exp\paren{-\frac{\abs{x}^2}{2\epsilon^2}}.
\end{equation}
Notice that \cref{eq:kernal2_5D} is a 3D Gaussian kernel, but the molecules will be restricted to diffuse within the two-dimensional membrane surface. We therefore label it the $2.5D$ reactive kernel. To understand how having a three-dimensional interaction for particles moving in two-dimensions changes the reaction efficiacy, in~\cite{ZI:2020} we compared it with a purely 2D bimolecular reaction kernel. The latter is given by the 2D Gaussian kernel
\begin{equation}\label{eq:kernal2D}
\hat{K}^{\epsilon}_{2D}(x) = k_{2D}\times \paren{\frac{1}{2\pi \epsilon^2}} \exp\paren{-\frac{\abs{x}^2}{2\epsilon^2}}.
\end{equation}

We now use our \textbf{MFM} to study the influence of the molecular reach $\epsilon$ and diffusivity, denoted by $D$, on CD28 phosphorylation in a simplified version of the preceding model. Let us denote $A(x, t)$ as the number density of CD28 at position $x$ at time t, and $B(x, t)$ as the number density of CD28* at position $x$ at time t. For illustrative purposes, we use simplified parameters, and assume there is only one (stationary) PD-1 protein in the system. Assume the spatial domain is a $[0, 50 \, nm]\times[0, 50 \, nm]$ square patch of membrane, with periodic boundary conditions. The one PD-1 molecule is placed at the center of the domain, $(25, 25)$, so that the number density of the PD-1 molecule is given by the constant field $\delta_{(25, 25)}(x)$. The \textbf{MFM} for this system is then
\begin{equation} \label{eq:MFM_CD28}
\begin{aligned} 
\frac{\partial}{\partial t} A(x, t) &= D \Delta_x A(x, t) - \lambda A(x, t) + \hat{K}^{\epsilon}(x-(25, 25))B(x, t),\\
\frac{\partial}{\partial t} B(x, t) &= D \Delta_x B(x, t) + \lambda A(x, t) - \hat{K}^{\epsilon}(x-(25, 25))B(x, t),
\end{aligned}
\end{equation}
where $\hat{K}^{\epsilon}$ is $\hat{K}^{\epsilon}_{2.5D}$ in the physiological case and $\hat{K}^{\epsilon}_{2D}$ in the idealized case of purely two-dimensional bimolecular interactions. 
\begin{figure*}[!ht]
  \centering
\includegraphics[width=0.5\textwidth]{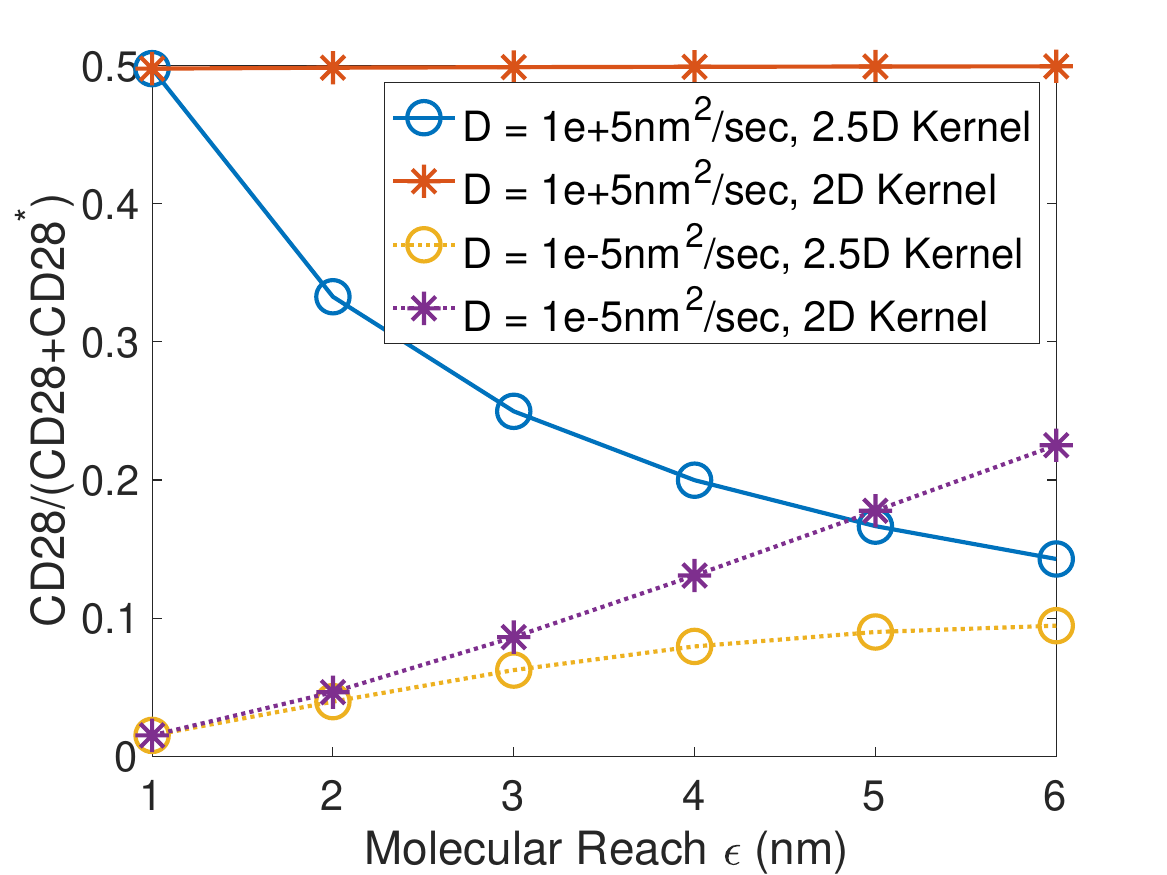}
  \caption{Steady-state fraction of CD28 that is inactivated versus molecular reach $\epsilon$ for different choices of bimolecular reaction kernel and diffusivity. Different diffusivities are labeled by different line styles: $D = 10^5 nm^2/sec$ is a solid line and $D = 10^{-5} nm^2/sec$ is a dotted line. Different reactive kernels are labeled by different markers: the 2D kernel is labeled by star markers and the 2.5D kernel is labeled by circle markers.}
  \label{fig:CD28_fraction}
\end{figure*}

In the following numerical experiments we fix $\lambda = 1 \, sec^{-1}$, $k_{2.5D} = k_{2D} = 2500\,  nm^2/sec$ and the initial density as constant, $A(x, 0) = 10^{-4} \, nm^{-2}$, $B(x, 0) = 0 \, nm^{-2}$.  We use the same numerical methods as \cref{SS:Numerics} for solving the \textbf{MFM}, choosing a time step of $dt = 0.001\, sec$ and the number of spatial points per axis to be $N = 100$. We study the influence of molecular reach $\epsilon$ on the steady-state fraction of CD28 in the inactivated state, illustrating the potency of PD-1. In~\cref{fig:CD28_fraction} we show how this fraction is modulated for each reaction kernel in both the reaction-limited case (i.e. fast diffusion $D = 10^{5} \, nm^2/sec$, solid curve) and the diffusion-limited case (i.e. slow diffusion $D = 10^{-5} \, nm^2/sec$, dotted line). We see that in the case of the (unphysical) 2D kernel (star markers), in both the diffusion-limited and reaction-limited regimes the inactivated fraction is non-decreasing with respect to the reach $\epsilon$. In contrast, for the physiological 2.5D kernel (circle markers), the inactivated fraction decreases in the reaction-limited regime, but increases in the diffusion-limited regime, as the reach $\epsilon$ is increased. This behavior qualitatively reproduces what we observed for a CRDME-based particle model where both CD28 and PD-1 could diffuse in~\cite{ZI:2020}, illustrating how the reach and diffusivity of a receptor can combine to modulate its regulatory efficacy.

While the \textbf{MFM} for the physiological 2.5D model follows from the underlying particle model of~\cite{ZI:2020}, it is not immediately clear what a corresponding \textbf{SM} should be. One could try to just write down such a model, but it would need to capture the qualitative dependency illustrated in~\cref{fig:CD28_fraction}, which arises from the explicit length scale over which spatial interactions can occur in the particle model. If one instead tries to derive the \textbf{SM} as the $\epsilon \to 0$ limit of the \textbf{MFM}, it is also unclear what this limit should be in the 2.5D case, where the bimolecular reaction kernel does not satisfy the assumptions of~\cref{thm:convTwoModels}. In particular, the kernel approaches a three-dimensional delta function, but is being used as a coefficient within a two-dimensional model. In contrast, the \textbf{MFM} is relatively immediate to write down, and as we have demonstrated reproduces the qualitative dependence of the fraction of inactivated receptor on the molecular reach $\epsilon$.
}

\section{Local Well-posedness and Regularity Analysis}\label{S:Local}

Local existence and uniqueness can be derived from the classical contraction mapping argument, see for example  \cite{SU:2017}. For completeness, we present the details for the Mean Field Model (\textbf{MFM}) in this section. We define $\X = [C_{b, unif}(\R^d)]^J$ to be the $J$-vector space of uniformly bounded and uniformly continuous functions on $\R^d$, equipped with the norm
$$||\vecrho||_\X = \sup_{j = 1, \cdots, J} \sup_{x\in\R^d} |\rho_j(x)|,$$
where $\vecrho = (\rho_1, \rho_2, \cdots, \rho_J)^T$ and each $\rho_j\in C_{b, unif}(\R^d)$, $j = 1, \cdots, J$.

Applying the variation of constants formula to the \textbf{MFM} \cref{Eq:density_formula_PIDEs} we have
\begin{equation}\label{eq:mildsolu1}
\vecrho(t) = S(t)\vecrho(0) + \int_0^t S(t-s)N_2[\vecrho](s)\, ds,
\end{equation}
where $\vecrho(t) = (\rho_1(\cdot, t), \rho_2(\cdot, t), \cdots, \rho_J(\cdot, t))^T$,  $S(t)$ is the heat semigroup generated by the linear diffusion operator Diag($D_1\lap_x, D_2\lap_x, \cdots, D_J\lap_x$), and $N_2[\vecrho] = ((N_2[\vecrho])_1, (N_2[\vecrho])_2, \cdots, (N_2[\vecrho])_J )^T$ represents the nonlinear reaction term with
\begin{align}
(N_2[\vecrho])_j(x, t)
&
= - \sum_{\ell = 1}^L \paren{
\frac{1}{\vec{\alpha}^{(\ell)}!}  \sum_{r = 1}^{\alpha_{\ell j}} \int_{\tilde{\vec{x}} \in \mathbb{X}^{(\ell)}}   \delta_{x}(\tilde{x}_r^{(j)})  K_\ell(\tilde{\vec{x}}) \, \left( \Pi_{k = 1}^{J} \Pi_{s = 1}^{\alpha_{\ell k}}  \rho_{k}(\tilde{x}_{s}^{(k)}, t)\right) \, d\tilde{\vec{x}}
}  \nonumber\\
&
\quad+\sum_{\ell = 1}^L \paren{  \frac{1}{\vec{\alpha}^{(\ell)}!} \sum_{r = 1}^{\beta_{\ell j}}  \int_{\tilde{\vec{x}} \in\mathbb{X}^{(\ell)}}  K_\ell(\tilde{\vec{x}}) \left( \int_{\vy \in \mathbb{Y}^{(\ell)}}   \delta_{x}(y_r^{(j)}) m_\ell(\vec{y}\, | \,\tilde{\vec{x}}) \,d \vec{y} \right) \left( \Pi_{k = 1}^J \Pi_{s = 1}^{\alpha_{\ell k}}  \rho_{k}(\tilde{x}_s^{(k)}, t)\right) \, d\tilde{\vec{x}}
},
\end{align}
for any $j = 1, \cdots, J$.
\begin{remark}
$S(t)$ is a $C_0$-semigroup on $\X$ and $||S(t)\vecrho||_\X\leq ||\vecrho||_\X$ for all $t\geq 0$ and $\vecrho\in\X$ . Furthermore, if all the diffusion coefficients are strictly positive, as we will subsequently assume, then $S(t)$ is an analytic semigroup on $\X$.
\end{remark}

Fix $C_1 > 0$, $T_0$ sufficiently small, and define the Banach space $\Y = C([0, T_0], \X)$ equipped with the norm
$$||\vecrho||_{\Y} = \sup_{t\in[0, T_0]} ||\vecrho(t)||_\X.$$
Let
\begin{align}
M &=  \{\vecrho \in\Y : ||\vecrho(\cdot) - S(\cdot)\vecrho_0||_\Y \leq C_1\},\label{Eq:Mspace}
\end{align}
adapted with the same norm
$$||\vecrho||_M = ||\vecrho||_{\Y}.$$
Note that by the contraction property of heat semigroup, we would have $C_2:=\sup_{ \vecrho\in M } ||\vecrho||_M \leq C_1 + C_0$, where $C_0 := \norm{\vecrho_0}_{M}$.

We first show $N_2[\vecrho]$ is a smooth and locally Lipschitz function w.r.t. $\vecrho$.
\begin{lemma}\label{lem:reactionPIDE}

Assume the reaction kernels and placement measures are of the form given in~\cref{Ass:MainAssumptions}, with the assumed allowable types of first and second order reactions. For any $\vecrho, \tilde{\vecrho} \in M$
we have that the following hold:
\begin{description}
\item[(P1) {[Boundedness]}]  $$||N_2[\vecrho]||_M \leq 3 \paren{C_2^2 \vee 1} \paren{\sum_{\ell = 1}^L k_\ell} $$

\item[(P2) {[Locally Lipschitz]}] $$||N_2[\vecrho] - N_2[\tilde{\vecrho}]   ||_M \leq 3 \paren{C_2 \vee 1} \paren{\sum_{\ell = 1}^L k_\ell}  ||\vecrho - \tilde{\vecrho}||_M,$$
\end{description}
where the constants $k_{\ell}$ are defined in \cref{Assume:kernelTwo} and \cref{Assume:kernelOne}.
\end{lemma}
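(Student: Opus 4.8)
The plan is to bound each term in $N_2[\vecrho]$ directly using the $L^1$-normalization of the kernels and the probability-measure property of the placement measures, which together collapse the nonlocal integrals into clean products of sup-norms. First I would observe that the nonlinear operator is a finite sum over reactions $\ell = 1,\dots,L$, and for each $\ell$ the integrand is a product of at most two factors $\rho_k^\epsilon(\tilde{x}_s^{(k)},\tau)$ (since $|\vec{\alpha}^{(\ell)}| \leq 2$), multiplied by the kernel $K_\ell$ and possibly the placement measure $m_\ell$. The key mechanism is that $\int_{\mathbb{X}^{(\ell)}} K_\ell(\tilde{\vec{x}})\,d\tilde{\vec{x}} = k_\ell$ by \cref{Assume:kernelTwo} and \cref{Assume:kernelOne}, and $\int_{\mathbb{Y}^{(\ell)}} m_\ell(\vec{y}\,|\,\tilde{\vec{x}})\,d\vec{y} = 1$ by \cref{Assume:measureP}. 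Thus pulling out the sup-norm bound $\|\rho_k\|_{L^\infty} \leq C_2$ on each density factor, the remaining integral of the kernel against the $\delta$-collapsed placement measure contributes at most $k_\ell$. Since there are at most two density factors, each such term is bounded by $(C_2^2 \vee 1)k_\ell$; summing over $\ell$ and accounting for the $\frac{1}{\vec{\alpha}^{(\ell)}!}\sum_r$ prefactors (which contribute bounded combinatorial constants absorbed into the factor of $3$) gives \textbf{(P1)}.

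For \textbf{(P2)}, the standard device is to telescope the difference of products: for the bilinear terms I would write
\begin{equation*}
\rho_k \rho_{k'} - \tilde{\rho}_k \tilde{\rho}_{k'} = (\rho_k - \tilde{\rho}_k)\rho_{k'} + \tilde{\rho}_k(\rho_{k'} - \tilde{\rho}_{k'}),
\end{equation*}
so that the difference $N_2[\vecrho] - N_2[\tilde{\vecrho}]$ is controlled by $\|\vecrho - \tilde{\vecrho}\|_M$ times a single remaining sup-norm factor bounded by $C_2 \vee 1$. Applying the same $L^1$-integration of the kernel ($k_\ell$) and normalization of $m_\ell$ ($1$) as in \textbf{(P1)}, and summing over $\ell$, yields the local Lipschitz bound with constant $3(C_2 \vee 1)\sum_\ell k_\ell$. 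The linear (first-order) reaction terms are even simpler, as they involve only one density factor and produce a bound proportional to $k_\ell \|\vecrho - \tilde{\vecrho}\|_M$ directly.

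The main obstacle, and the point requiring genuine care rather than routine estimation, is the rigorous handling of the $\delta$-function collapses $\int \delta_x(\tilde{x}_r^{(j)})(\cdots)\,d\tilde{x}_r^{(j)}$ together with the singular placement measures of \cref{Assume:measureTwo2One}, \cref{Assume:measureTwo2Two}, and \cref{Assume:measureOne2Two}, which themselves contain $\delta$-distributions. One must verify that after the formal action of all $\delta$-functions the resulting expression is genuinely a bounded continuous function of $x$ (so that the $L^\infty$ norms are well-defined) and that Fubini's theorem legitimately interchanges the order of the remaining integrations against $K_\ell$ and the absolutely continuous part $\rho^\epsilon(|x-y|)$. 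I would address this by treating each allowable reaction type separately and checking that the composition of a bounded continuous density with the affine maps $z \mapsto \alpha_i x + (1-\alpha_i)y$ appearing in the placement measures preserves boundedness and continuity, so that the overall estimate reduces to integrating $K_\ell$ against a quantity uniformly bounded by a product of the $C_2$-norms. Once this is established, the combinatorial bookkeeping to reach the explicit constant $3$ is a matter of counting that each reaction contributes at most one gain/loss term per species with at most two density factors.
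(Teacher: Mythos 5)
Your proposal follows essentially the same route as the paper's proof: both bound the loss and gain terms by extracting sup-norms of the density factors and using $\int K_\ell = k_\ell$ together with the $\delta$-collapse of the placement measures (whose affine, Jacobian-one structure makes the collapsed integral still equal $k_\ell$), and both prove \textbf{(P2)} by telescoping the difference of products. The only quibble is bookkeeping: telescoping a product of two densities yields a factor $2C_2 \vee 1$ rather than $C_2 \vee 1$ (the paper's own proof indeed ends with $3\paren{2C_2\vee 1}\sum_{\ell}k_\ell$, slightly mismatching its lemma statement), but this constant is immaterial for the contraction-mapping application.
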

\begin{proof}
\textbf{Proof of (P1):}
We have the following two estimates for the components of $N_2[\vecrho]$.
\begin{align*}
&\abs{\frac{1}{\vec{\alpha}^{(\ell)}!}  \sum_{r = 1}^{\alpha_{\ell j}} \int_{\tilde{\vec{x}} \in \mathbb{X}^{(\ell)}}   \delta_{x}(\tilde{x}_r^{(j)})  K_\ell(\tilde{\vec{x}}) \, \left( \Pi_{k = 1}^{J} \Pi_{s = 1}^{\alpha_{\ell k}}  \rho_{k}(\tilde{x}_{s}^{(k)}, t)\right) \, d\tilde{\vec{x}}}\\
&\leq \abs{\int_{\tilde{\vec{x}} \in \mathbb{X}^{(\ell)}}   \delta_{x}(\tilde{x}_1^{(j)})  K_\ell(\tilde{\vec{x}}) \, \left( \Pi_{k = 1}^{J} \Pi_{s = 1}^{\alpha_{\ell k}}  \rho_{k}(\tilde{x}_{s}^{(k)}, t)\right)d\tilde{\vec{x}}}\\
&\leq \abs{\int_{\tilde{\vec{x}} \in \mathbb{X}^{(\ell)}}   \delta_{x}(\tilde{x}_1^{(j)})  K_\ell(\tilde{\vec{x}}) \, d\tilde{\vec{x}}} \times ||\vecrho||_M^{|\vec{\alpha}^{(\ell)}|} \leq  k_\ell  \paren{C_2^2 \vee 1},
\end{align*}
and
\begin{align*}
&\abs{ \frac{1}{\vec{\alpha}^{(\ell)}!} \sum_{r = 1}^{\beta_{\ell j}}  \int_{\tilde{\vec{x}} \in\mathbb{X}^{(\ell)}}  K_\ell(\tilde{\vec{x}}) \left( \int_{\vy \in \mathbb{Y}^{(\ell)}}   \delta_{x}(y_r^{(j)}) m_\ell(\vec{y}\, | \,\tilde{\vec{x}}) \,d \vec{y} \right) \left( \Pi_{k = 1}^J \Pi_{s = 1}^{\alpha_{\ell k}}  \rho_{k}(\tilde{x}_s^{(k)}, t)\right) \, d\tilde{\vec{x}}}\\
& \leq  \frac{1}{\vec{\alpha}^{(\ell)}!}\beta_{\ell j}||\vecrho||_M^{|\vec{\alpha}^{(\ell)}|} \times  \abs{\int_{\tilde{\vec{x}} \in\mathbb{X}^{(\ell)}}  K_\ell(\tilde{\vec{x}}) \left( \int_{\vy \in \mathbb{Y}^{(\ell)}}   \delta_{x}(y_1^{(j)}) m_\ell(\vec{y}\, | \,\tilde{\vec{x}}) \,d \vec{y} \right)  \, d\tilde{\vec{x}}}\\
&\leq 2 k_\ell  \paren{C_2^2 \vee 1}.
\end{align*}
Then
\begin{align*}
||N_2[\vecrho]||_M & \leq 3\paren{C_2^2 \vee 1}\paren{\sum_{\ell = 1}^L k_\ell}.
\end{align*}

\textbf{Proof of (P2)}
For first and second order reactions we have
\begin{align*}
&|\Pi_{k = 1}^{J} \Pi_{s = 1}^{\alpha_{\ell k}}  \rho_{k}(x_{s}^{(k)}, t) - \Pi_{k = 1}^{J} \Pi_{s = 1}^{\alpha_{\ell k}}  \tilde{\rho}_{k}(x_{s}^{(k)}, t)| \leq \paren{2||\vecrho||_M\vee 1}||\vecrho - \tilde{\vecrho}||_M.
\end{align*}
This implies the following two estimates for the components of $N_2[\vecrho] - N_2[\tilde{\vecrho}]$,
\begin{align*}
&\abs{\frac{1}{\vec{\alpha}^{(\ell)}!}  \sum_{r = 1}^{\alpha_{\ell j}} \int_{\tilde{\vec{x}} \in \mathbb{X}^{(\ell)}}   \delta_{x}(\tilde{x}_r^{(j)})  K_\ell(\tilde{\vec{x}}) \, \left( \Pi_{k = 1}^{J} \Pi_{s = 1}^{\alpha_{\ell k}}  \rho_{k}(\tilde{x}_{s}^{(k)}, t)- \Pi_{k = 1}^{J} \Pi_{s = 1}^{\alpha_{\ell k}}  \tilde{\rho}_{k}(x_{s}^{(k)}, t)\right) \, d\tilde{\vec{x}}}\\
&\leq \abs{\int_{\tilde{\vec{x}} \in \mathbb{X}^{(\ell)}}   \delta_{x}(\tilde{x}_1^{(j)})  K_\ell(\tilde{\vec{x}}) \, \left( \Pi_{k = 1}^{J} \Pi_{s = 1}^{\alpha_{\ell k}}  \rho_{k}(\tilde{x}_{s}^{(k)}, t)- \Pi_{k = 1}^{J} \Pi_{s = 1}^{\alpha_{\ell k}}  \tilde{\rho}_{k}(x_{s}^{(k)}, t)\right)d\tilde{\vec{x}}}\\
&\leq \abs{\int_{\tilde{\vec{x}} \in \mathbb{X}^{(\ell)}}   \delta_{x}(\tilde{x}_1^{(j)})  K_\ell(\tilde{\vec{x}}) \, d\tilde{\vec{x}}} \times\paren{2||\vecrho||_M\vee 1}||\vecrho - \tilde{\vecrho}||_M \leq k_\ell \paren{2C_2\vee 1}||\vecrho - \tilde{\vecrho}||_M,
\end{align*}
and
\begin{align*}
&\abs{ \frac{1}{\vec{\alpha}^{(\ell)}!} \sum_{r = 1}^{\beta_{\ell j}}  \int_{\tilde{\vec{x}} \in\mathbb{X}^{(\ell)}}  K_\ell(\tilde{\vec{x}}) \left( \int_{\vy \in \mathbb{Y}^{(\ell)}}   \delta_{x}(y_r^{(j)}) m_\ell(\vec{y}\, | \,\tilde{\vec{x}}) \,d \vec{y} \right) \left( \Pi_{k = 1}^J \Pi_{s = 1}^{\alpha_{\ell k}}  \rho_{k}(\tilde{x}_s^{(k)}, t)- \Pi_{k = 1}^{J} \Pi_{s = 1}^{\alpha_{\ell k}}  \tilde{\rho}_{k}(x_{s}^{(k)}, t)\right) \, d\tilde{\vec{x}}}\\
& \leq  \frac{1}{\vec{\alpha}^{(\ell)}!}\beta_{\ell j}\paren{2||\vecrho||_M\vee 1}||\vecrho - \tilde{\vecrho}||_M\times \abs{ \int_{\tilde{\vec{x}} \in\mathbb{X}^{(\ell)}}  K_\ell(\tilde{\vec{x}}) \left( \int_{\vy \in \mathbb{Y}^{(\ell)}}   \delta_{x}(y_1^{(j)}) m_\ell(\vec{y}\, | \,\tilde{\vec{x}}) \,d \vec{y} \right)  \, d\tilde{\vec{x}}}\\
&\leq 2k_\ell\paren{2C_2\vee 1}||\vecrho - \tilde{\vecrho}||_M.
\end{align*}
Therefore,as claimed, we obtain that
\begin{align*}
||N_2[\vecrho] - N_2[\tilde{\vecrho}]   ||_M \leq 3\paren{\sum_{k = 1, \cdots, L}k_\ell}\paren{2C_2\vee 1}||\vecrho - \tilde{\vecrho}||_M.
\end{align*}
\end{proof}

Making use of the boundedness and locally Lipschitz properties of $N_2[\, \cdot \,]$ from \cref{lem:reactionPIDE}, we obtain
\begin{theorem}\label{thm:localEU_PIDE}
Assume the conditions of \cref{lem:reactionPIDE}. For all $C_0 > 0$, there exists a $T_0 > 0$ such that for $\vecrho_0\in\X$, with $||\vecrho_0||_\X \leq C_0$, there exists a unique mild solution $\vecrho\in C([0, T_0], \X)$  to \cref{Eq:density_formula_PIDEs} with $\vecrho(0) = \vecrho_0$.
\end{theorem}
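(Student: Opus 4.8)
The plan is to set up the mild-solution map as a contraction on the complete metric space $M$ and invoke the Banach fixed point theorem, with \cref{lem:reactionPIDE} supplying exactly the two nonlinear estimates that the argument needs. Define the map
$$\Phi[\vecrho](t) = S(t)\vecrho_0 + \int_0^t S(t-s)\, N_2[\vecrho](s)\, ds.$$
First I would record that $M$, being the closed ball of radius $C_1$ about the fixed element $t\mapsto S(t)\vecrho_0$ inside the Banach space $\Y = C([0,T_0],\X)$, is itself a complete metric space under the induced metric, so the fixed point theorem applies once $\Phi$ is shown to map $M$ into itself and to be a contraction there. Throughout I would use the semigroup contraction property $\norm{S(t)\vecrho}_\X \le \norm{\vecrho}_\X$ noted in the preceding remark, together with $C_2 = \sup_{\vecrho\in M}\norm{\vecrho}_M \le C_1 + C_0$.

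For the self-mapping property, since $\Phi[\vecrho](t) - S(t)\vecrho_0 = \int_0^t S(t-s)N_2[\vecrho](s)\,ds$, the semigroup contraction and the boundedness estimate \textbf{(P1)} give
$$\norm{\Phi[\vecrho](\cdot) - S(\cdot)\vecrho_0}_\Y \le T_0\, \sup_{s\in[0,T_0]}\norm{N_2[\vecrho](s)}_\X \le 3 T_0 \paren{C_2^2 \vee 1}\paren{\sum_{\ell=1}^L k_\ell}.$$
Choosing $T_0$ small enough that the right-hand side is at most $C_1$ ensures $\Phi(M)\subseteq M$. For the contraction property, the same semigroup bound together with the locally Lipschitz estimate \textbf{(P2)} yields
$$\norm{\Phi[\vecrho] - \Phi[\tilde{\vecrho}]}_\Y \le T_0\, \sup_{s}\norm{N_2[\vecrho](s) - N_2[\tilde{\vecrho}](s)}_\X \le 3 T_0 \paren{C_2 \vee 1}\paren{\sum_{\ell=1}^L k_\ell}\norm{\vecrho - \tilde{\vecrho}}_M.$$
Shrinking $T_0$ further so that this prefactor is strictly below $1$ makes $\Phi$ a contraction; the Banach fixed point theorem then produces a unique $\vecrho\in M$ with $\vecrho = \Phi[\vecrho]$, which is by construction the mild solution with $\vecrho(0)=\vecrho_0$. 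Both smallness conditions on $T_0$ depend only on $C_0$ (through $C_2\le C_1+C_0$), the fixed constant $C_1$, and the $k_\ell$, as required by the statement.

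The remaining point is that the theorem claims uniqueness in all of $C([0,T_0],\X)$, whereas the fixed point argument only gives uniqueness within $M$. To promote this, I would take any two mild solutions $\vecrho^{(1)},\vecrho^{(2)}\in C([0,T_0],\X)$ sharing the initial datum $\vecrho_0$, note that each is bounded on $[0,T_0]$ by continuity, and apply the locally Lipschitz bound of \cref{lem:reactionPIDE} (with $C_2$ replaced by a common bound for the two solutions) to obtain
$$\norm{\vecrho^{(1)}(t) - \vecrho^{(2)}(t)}_\X \le \tilde{L}\int_0^t \norm{\vecrho^{(1)}(s) - \vecrho^{(2)}(s)}_\X\, ds,$$
whence Gronwall's inequality forces $\vecrho^{(1)}=\vecrho^{(2)}$ on $[0,T_0]$. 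The argument is otherwise entirely routine, since \cref{lem:reactionPIDE} has already isolated the nonlinear estimates that carry it; the only genuine care needed is the bookkeeping that makes the two smallness choices of $T_0$ mutually consistent and dependent only on the allowed quantities, and the promotion of uniqueness from $M$ to the full space via Gronwall. I do not anticipate a substantive obstacle.
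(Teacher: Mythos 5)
Your proposal is correct and follows essentially the same route as the paper: a Banach fixed point argument on the set $M$, using the semigroup contraction property together with estimates \textbf{(P1)} and \textbf{(P2)} of \cref{lem:reactionPIDE} to get the self-mapping and contraction properties for small $T_0$. Your final Gronwall step, promoting uniqueness from $M$ to all of $C([0,T_0],\X)$, is a small but genuine refinement: the paper's proof asserts uniqueness in the full space while the contraction argument alone only gives it within $M$, so your addition closes a point the paper leaves implicit.
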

\begin{proof}
Let $F[\vecrho]$ be the righthand side of the variation of constant formula \cref{eq:mildsoluPDE}. 
 We'll show in the following that $F$ maps $M$ to $M$ and is a contraction mapping in $M$ using the contraction property of heat semigroup and the properties of the reaction operator in \cref{lem:reactionPIDE}.
\begin{enumerate}[wide, labelwidth=!, labelindent=0pt]
\item $F$ maps $M$ to $M$ follows from

\begin{align*}
||F[\vecrho] - S(t)\vecrho_0||_M &  = \sup_{t\in [0, T_0]}  ||F[\vecrho](t) - S(t)\vecrho_0||_\X  = \sup_{t\in [0, T_0]}  \norm{ \int_0^t S(t-s)N_2[\vecrho](s)\, ds}_\X\\
&\leq \sup_{t\in [0, T_0]}   \int_0^t ||S(t-s)N_2[\vecrho](s)||_\X\, ds\leq \sup_{t\in [0, T_0]}   \int_0^t ||N_2[\vecrho](s)||_\X\, ds\\
&\leq T_0\paren{3 \paren{C_2^2 \vee 1} \paren{\sum_{\ell = 1}^L  \kappa_\ell}}\leq C_1,
\end{align*}
as long as $T_0\leq \frac{C_1}{3 \paren{C_2^2 \vee 1} \paren{\sum_{\ell = 1}^L  \kappa_\ell}}$.
\item $F$ is a contraction mapping in $M$ follows from
\begin{align*}
||F[\vecrho] - F[\tilde{\vecrho}]  ||_M &= \sup_{t\in [0, T_0]}  ||F[\vecrho](t) - F[\tilde{\vecrho}]||_\X
\leq \sup_{t\in [0, T_0]}   \int_0^t ||S(t-s)(N_2[\vecrho](s) - N_2[\tilde{\vecrho}](s))||_\X\, ds\\
& \leq \sup_{t\in [0, T_0]}   \int_0^t ||N_2[\vecrho](s) - N_2[\tilde{\vecrho}](s)||_\X\, ds
\leq \sup_{t\in [0, T_0]}   \int_0^t ||N_2[\vecrho] - N_2[\tilde{\vecrho}]||_M\, ds\\
& \leq T_0\paren{3 \paren{2C_2 \vee 1} \paren{\sum_{\ell = 1}^L  \kappa_\ell} }||\vecrho(s) - \tilde{\vecrho}(s)||_M
\leq \frac{1}{2}||\vecrho(s) - \tilde{\vecrho}(s)||_M,
\end{align*}
as long as $T_0\leq \frac{1}{6 \paren{2C_2 \vee 1} \paren{\sum_{\ell = 1}^L  \kappa_\ell} }$.
\end{enumerate}
Therefore, by the contraction mapping theorem there exists a unique mild solution $\vecrho\in C([0, T_0], \X)$  to \cref{Eq:density_formula_PIDEs} satisfying \cref{eq:mildsoluPDE}.
\end{proof}

\begin{theorem}\label{thm:regularityPIDE}
Under the conditions of \cref{thm:localEU_PIDE}, the mild solution $\rho_j(x, t) \in C([0, T_0], C_{b, unif}(\R^d)) \cap C^1((0, T_0], C^2(\R^d))$ is a classical solution  to \cref{Eq:density_formula_PIDEs}. Furthermore, if $\rho_j(x, 0) \in C_b^1(\R^d)$, for all $j = 1, \cdots, J$, then  $\rho_j(x, t) \in   C^2_b(\R^d)$ for any $t\in (0, T_0] $.
\end{theorem}
\begin{proof}
Since $\vecrho\in C([0, T_0], \X)$ as we showed in \cref{thm:localEU_PIDE}, classical results for nonhomogenous Cauchy problems give that $\rho_j(x, t)\in  C^1((0, T_0], C^2(\R^d))$ (see Chapter 2.3 Theorem 7 in \cite{E:2010}). $\vecrho$ is hence a classical solution. We'll next show that under the condition  $\rho_j(x, 0) \in C_b^1(\R^d)$ then  $\rho_j(x, t) \in  C^2_b(\R^d)$ for $t \in (0,T_0]$, i.e. all the first and second partial derivatives in $x$ are bounded for $0 < t \leq T_0$.

Let us denote $\Phi(x, t)$ as the fundamental solution of $d$-dimensional heat equation. Note that for any $x\in\R^d$
\begin{equation}
\int_{\R^d}  |\partial_{x_i}\Phi(x-y, t)|\, dy = \sqrt{\frac{1}{\pi t}}.
\end{equation}

We establish two estimates for the first and second partial derivatives for $\rho_j(x, t)$, $t\in (0, T_0]$. We claim
\begin{equation}
||\partial_{x_i} \rho_j(x, t)||_{C_b(\R^d)} \leq  ||\rho_j(x, 0) ||_{C_b^1(\R^d)} + 6 \paren{C_2^2 \vee 1} \paren{\sum_{\ell = 1}^L  k_\ell}  \sqrt{\frac{T_0}{\pi}} \tag{\textbf{Inequality 1}}
\end{equation}
for any $0 < t\leq T_0$, any $i = 1, \cdots, d$, and $j = 1, \cdots, J$. Starting from \cref{eq:mildsoluPDE}, we have that
\begin{align}
|\partial_{x_i} \rho_j(x, t)| & =  | \partial_{x_i} \int_{\R^d} \Phi(x-y, t) \rho_j(y, 0)\, dy +  \partial_{x_i} \int_{0}^t \int_{\R^d} \Phi(x-y, t-s) \paren{N_2[\vecrho]}_j(y, s)\, dy\, ds|\nonumber\\
&
 \leq \int_{\R^d}  |\Phi(x-y, t)| |\partial_{x_i} \rho_j(y, 0)|\, dy + ||N_2[\vecrho]||_{M}\times \int_{0}^t \int_{\R^d} |\partial_{x_i} \Phi(x-y, t-s)|\, dy\, ds\nonumber\\
 &
= ||\rho_j(x, 0) ||_{C_b^1(\R^d)}+ ||N_2[\vecrho]||_{M}\times \int_{0}^t \sqrt{\frac{1}{\pi(t-s)}}\, ds\nonumber\\
 &
\leq  ||\rho_j(x, 0) ||_{C_b^1(\R^d)} + 3 \paren{C_2^2 \vee 1} \paren{\sum_{\ell = 1}^L  k_\ell}  \times 2 \sqrt{\frac{t}{\pi}},
\end{align}
where in the last inequality, we use the estimates for $ ||N_2[\vecrho]||_{M}$ in \cref{lem:reactionPIDE} and recall that we denote $C_2:=\sup_{ \vecrho\in M } ||\vecrho||_M $.

For the second derivatives, we claim
\begin{equation}
||\partial_{x_i, x_k} \rho_j(x, t)||_{C_b(\R^d)} \leq    ||\rho_j(x, 0) ||_{C_b^1(\R^d)}\sqrt{\frac{1}{\pi t}} +  6C_4(2C_2\vee 1)\paren{\sum_{\ell = 1}^L k_\ell} \sqrt{\frac{T_0}{\pi}} \tag{\textbf{Inequality 2}}
\end{equation}
for any $0 < t\leq T_0$, any $i, k = 1, \cdots, d$, and $j = 1, \cdots, J$, where we have denoted $C_4 =  \sup_{j = 1, \cdots, J}||\rho_j(x, 0) ||_{C_b^1(\R^d)} + 6 \paren{C_2^2 \vee 1} \paren{\sum_{\ell = 1}^L  k_\ell}  \sqrt{\frac{T_0}{\pi}} $. Using that $ ||\partial_{x_i} \rho_j(x, t)||_{C_b(\R^d)} \leq  C_4$ from (\textbf{Inequality 1}) we have
\begin{align}
&|\partial_{x_i}\paren{N_2[\vecrho]}_j(x, t)| \nonumber\\
& =  - \sum_{\ell = 1}^L \paren{
\frac{1}{\vec{\alpha}^{(\ell)}!}  \sum_{r = 1}^{\alpha_{\ell j}} \partial_{x_i}\ \int_{\tilde{\vec{x}} \in \mathbb{X}^{(\ell)}}   \delta_{x}(\tilde{x}_r^{(j)})  K_\ell(\tilde{\vec{x}}) \, \left( \Pi_{k = 1}^{J} \Pi_{s = 1}^{\alpha_{\ell k}}  \rho_{k}(\tilde{x}_{s}^{(k)}, t)\right) \, d\tilde{\vec{x}}
}  \nonumber\\
&
\quad+\sum_{\ell = 1}^L \paren{  \frac{1}{\vec{\alpha}^{(\ell)}!} \sum_{r = 1}^{\beta_{\ell j}}  \partial_{x_i}\ \int_{\tilde{\vec{x}} \in\mathbb{X}^{(\ell)}}  K_\ell(\tilde{\vec{x}}) \left( \int_{\vy \in \mathbb{Y}^{(\ell)}}   \delta_{x}(y_r^{(j)}) m_\ell(\vec{y}\, | \,\tilde{\vec{x}}) \,d \vec{y} \right) \left( \Pi_{k = 1}^J \Pi_{s = 1}^{\alpha_{\ell k}}  \rho_{k}(\tilde{x}_s^{(k)}, t)\right) \, d\tilde{\vec{x}}
}\nonumber\\
&
\leq \sum_{\ell = 1}^L \frac{\beta_{\ell j} + \alpha_{\ell j}}{\vec{\alpha}^{(\ell)}!} \times k_\ell\paren{ 2C_2 \vee 1} \sup_{k = 1, \cdots, J}||\partial_{x_m} \rho_{k}(x, t)||_{C_b(\R^d)} \qquad \text{( by  \cref{lem:diffNonlinearPIDE1}-\cref{lem:diffNonlinearPIDE2} )} \nonumber\\
&
\leq 3C_4(2C_2\vee 1)\paren{\sum_{\ell = 1}^L k_\ell}.
\end{align}
Using \cref{eq:mildsoluPDE}
\begin{align}
|\partial_{x_i, x_k} \rho_j(x, t)| & =  | \partial_{x_i, x_k} \int_{\R^d} \Phi(x-y, t) \rho_j(y, 0)\, dy +  \partial_{x_i, x_k} \int_{0}^t \int_{\R^d} \Phi(x-y, t-s) \paren{N_2[\vecrho]}_j(y, s)\, dy\, ds|\nonumber\\
&
 \leq \int_{\R^d}  |\partial_{x_i}\Phi(x-y, t)||\partial_{x_k} \rho_j(y, 0)|\, dy +\sup_{t\in(0, T_0]} ||\partial_{x_i}\paren{N_2[\vecrho]}_j(x, t)||_{C_b(\R^d)}\nonumber\\
 &
 \qquad \qquad \times \int_{0}^t \int_{\R^d} |\partial_{x_k} \Phi(x-y, t-s)|\, dy\, ds\nonumber\\
 &
\leq ||\rho_j(x, 0) ||_{C_b^1(\R^d)} \sqrt{\frac{1}{\pi t}} +  3C_4(2C_2\vee 1)\paren{\sum_{\ell = 1}^L \kappa_\ell}
\times \int_{0}^t \sqrt{\frac{1}{\pi(t-s)}}\, ds\nonumber\\
 &
\leq  ||\rho_j(x, 0) ||_{C_b^1(\R^d)}\sqrt{\frac{1}{\pi t}} +  3C_4(2C_2\vee 1)\paren{\sum_{\ell = 1}^L \kappa_\ell}
\times 2 \sqrt{\frac{t}{\pi}}.
\end{align}

Thus we conclude $\rho_j(x, t) \in C^2_b(\R^d)$ for any $t\in (0, T_0] $ as claimed and the theorem has been proven.
\end{proof}

For the Standard Model (\textbf{SM}), let  $N_1[\vecrho] = \paren{ \paren{N_1[\vecrho]}_1, \paren{N_1[\vecrho]}_2, \cdots, \paren{N_1[\vecrho]}_J }^T$ represent the nonlinear reaction term with
\begin{equation}
\paren{N_1[\vecrho]}_j =  \sum_{\ell = 1}^L \kappa_\ell (\beta_{\ell j} - \alpha_{\ell j}) \paren{ \Pi_{k = 1}^J \rho_{k}(x, t)^{\alpha_{\ell k}} }  ,\nonumber\\
\label{Eq:nonlinear_formula_PDEs}
\end{equation}
for $1\leq j\leq J$. It is standard to show that $N_1[\vecrho]$ is a smooth and locally Lipschitz function w.r.t. $\vecrho$. Analogously to the previous calculations for the \textbf{MFM}, we can obtain
\begin{theorem}\label{thm:localEU_PDE}
For all $C_0 > 0$, there exists a $T_0 > 0$ such that for $\vecrho_0\in\X$, with $||\vecrho_0||_\X \leq C_0$, there exists a unique mild solution $\vecrho\in C([0, T_0], \X)$  to \cref{Eq:density_formula_PDEs} with $\vecrho(0) = \vecrho_0$.
\end{theorem}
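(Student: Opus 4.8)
The plan is to mirror exactly the contraction-mapping argument used for the \textbf{MFM} in \cref{thm:localEU_PIDE}, since the mild-solution formulation \cref{eq:mildsoluPDE} and the ambient spaces $\X$, $M$ (from \cref{Eq:Mspace}) are identical in structure, with only the nonlinearity replaced by $N_1[\vecrho]$ from \cref{Eq:nonlinear_formula_PDEs}. First I would establish the two analogues of the boundedness and locally-Lipschitz properties (P1) and (P2) from \cref{lem:reactionPIDE}, now for $N_1$. This is where essentially all of the (minimal) content lies, and it is even more routine than the \textbf{MFM} case, because $N_1$ is simply a polynomial of total degree $|\vec{\alpha}^{(\ell)}|\leq 2$ in the components $\rho_k$, with no convolution or placement-measure integrals to control.

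For boundedness, on the set $M$ I would use $\abs{\beta_{\ell j}-\alpha_{\ell j}}\leq 2$ together with $\abs{\Pi_{k=1}^{J}\rho_k(x,t)^{\alpha_{\ell k}}}\leq \norm{\vecrho}_M^{|\vec{\alpha}^{(\ell)}|}\leq (C_2^2\vee 1)$ to obtain $\norm{N_1[\vecrho]}_M \leq 2(C_2^2\vee 1)\paren{\sum_{\ell=1}^L \kappa_\ell}$, where $C_2:=\sup_{\vecrho\in M}\norm{\vecrho}_M$ as before. For the locally-Lipschitz estimate I would reuse the same product-telescoping inequality employed in the proof of (P2), namely $\abs{\Pi_{k}\rho_k^{\alpha_{\ell k}}-\Pi_{k}\tilde{\rho}_k^{\alpha_{\ell k}}}\leq (2C_2\vee 1)\norm{\vecrho-\tilde{\vecrho}}_M$, valid for total degree at most two (one splits off factors one at a time and bounds each remaining factor by $C_2$), which yields $\norm{N_1[\vecrho]-N_1[\tilde{\vecrho}]}_M \leq 2(2C_2\vee 1)\paren{\sum_{\ell=1}^L \kappa_\ell}\norm{\vecrho-\tilde{\vecrho}}_M$.

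With these two estimates in hand, I would define $F[\vecrho](t)=S(t)\vecrho_0+\int_0^t S(t-s)N_1[\vecrho](s)\,ds$ and verify the two hypotheses of the Banach fixed-point theorem on $M$, exactly as in \cref{thm:localEU_PIDE}. Using the contraction property $\norm{S(t)f}_\X\leq\norm{f}_\X$ of the heat semigroup, the self-map property $F:M\to M$ follows from $\norm{F[\vecrho]-S(\cdot)\vecrho_0}_M\leq T_0\,\norm{N_1[\vecrho]}_M\leq C_1$ whenever $T_0$ is taken small relative to the boundedness constant, and the contraction property follows from $\norm{F[\vecrho]-F[\tilde{\vecrho}]}_M\leq T_0\,\norm{N_1[\vecrho]-N_1[\tilde{\vecrho}]}_M\leq \tfrac12\norm{\vecrho-\tilde{\vecrho}}_M$ whenever $T_0$ is small relative to the Lipschitz constant. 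Taking $T_0$ to be the minimum of these two thresholds (each depending only on $C_0$ through $C_1,C_2$ and on the $\kappa_\ell$), the fixed-point theorem produces the unique mild solution $\vecrho\in C([0,T_0],\X)$.

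I expect no genuine obstacle here: the only bookkeeping is tracking the dependence of $T_0$ on $C_0$ via $C_1$ and $C_2$, and observing that the degree-$\leq 2$ polynomial structure makes $N_1$ globally defined and smooth, so that local Lipschitz continuity on the bounded set $M$ is immediate. The entire difficulty of the well-posedness problem is concentrated in the \textbf{MFM} case, whose nonlocal reaction term required the careful kernel and placement-measure estimates of \cref{lem:reactionPIDE}; the \textbf{SM} nonlinearity is a strictly simpler, purely local special instance, so the argument is genuinely \emph{analogous} as claimed.
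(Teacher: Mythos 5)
Your proposal is correct and takes essentially the same route as the paper: the paper itself gives no separate proof for \cref{thm:localEU_PDE}, merely noting that $N_1[\vecrho]$ is smooth and locally Lipschitz and that the result follows ``analogously to the previous calculations for the \textbf{MFM},'' i.e., by the contraction-mapping argument of \cref{thm:localEU_PIDE}. Your explicit boundedness and Lipschitz estimates for the polynomial nonlinearity $N_1$, followed by the fixed-point argument on $M$, are exactly the details the paper leaves implicit.
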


\begin{theorem}\label{thm:regularityPDE}
Under the condition of  \cref{thm:localEU_PDE},  then the mild solution $\rho_j(x, t) \in C([0, T_0], C_{b, unif}(\R^d)) \cap C^1((0, T_0], C^2(\R^d))$ is a classical solution  to \cref{Eq:density_formula_PDEs}. Furthermore when assuming $\rho_j(x, 0) \in C_b^1(\R^d)$, for all $j = 1, \cdots, J$, indeed $\rho_j(x, t) \in   C^2_b(\R^d)$ for any $t\in (0, T_0] $.
\end{theorem}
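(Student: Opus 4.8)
The plan is to mirror the proof of \cref{thm:regularityPIDE} almost verbatim, exploiting that the only structural difference is that the Standard Model nonlinearity $N_1[\vecrho]$ is a purely local (pointwise) polynomial in the components $\rho_k(x,t)$, rather than a convolution against reaction kernels and placement measures. First I would invoke \cref{thm:localEU_PDE} to obtain the mild solution $\vecrho \in C([0,T_0], \X)$, and then apply the classical regularity theory for nonhomogeneous Cauchy problems (Chapter 2.3, Theorem 7 in \cite{E:2010}) to conclude $\rho_j(x,t) \in C^1((0,T_0], C^2(\R^d))$ and hence that $\vecrho$ is a classical solution. This step is identical to the \textbf{MFM} case, since $\paren{N_1[\vecrho]}_j(\cdot, s)$ inherits the required time-continuity from $\vecrho \in C([0,T_0],\X)$.

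For the claimed $C^2_b$ regularity under $\rho_j(\cdot, 0) \in C_b^1(\R^d)$, I would establish the two analogous a priori derivative estimates. Denoting by $\Phi(x,t)$ the $d$-dimensional heat kernel and using $\int_{\R^d} \abs{\partial_{x_i}\Phi(x-y,t)}\,dy = \sqrt{1/(\pi t)}$, I would first bound the first derivatives: differentiating the Duhamel representation \cref{eq:mildsoluPDE}, I place the $x$-derivative on the initial-data kernel $\Phi(x-y,t)$ in the first term (so the derivative effectively lands on $\rho_j(y,0)$, which is bounded since $\rho_j(\cdot,0) \in C_b^1$) and distribute one derivative onto $\Phi(x-y,t-s)$ in the Duhamel term, controlling $\paren{N_1[\vecrho]}_j$ by its uniform bound (the analog of property (P1) of \cref{lem:reactionPIDE}, which the text already notes holds for $N_1$). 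Since $\int_0^t \sqrt{1/(\pi(t-s))}\,ds = 2\sqrt{t/\pi}$ is finite, this yields a uniform-in-$t$ bound $\norm{\partial_{x_i}\rho_j(\cdot,t)}_{C_b} \le C_4$, the exact counterpart of (\textbf{Inequality 1}).

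The key simplification in the \textbf{SM} case is the control of $\partial_{x_i}\paren{N_1[\vecrho]}_j$. Because $\paren{N_1[\vecrho]}_j = \sum_\ell \kappa_\ell (\beta_{\ell j} - \alpha_{\ell j}) \Pi_k \rho_k(x,t)^{\alpha_{\ell k}}$ is evaluated at the single point $x$, differentiation is just the chain rule: $\partial_{x_i}\paren{N_1[\vecrho]}_j$ is a finite sum of products in which exactly one factor is a first derivative $\partial_{x_i}\rho_k$ and the rest are undifferentiated $\rho$'s. Using the uniform $L^\infty$ bound (via $C_2$) together with the first-derivative bound $C_4$ from the previous step, this gives $\norm{\partial_{x_i}\paren{N_1[\vecrho]}_j}_{C_b} \le 3 C_4 (2C_2 \vee 1)\paren{\sum_{\ell=1}^L \kappa_\ell}$, playing the role of the estimate obtained from \cref{lem:diffNonlinearPIDE1}--\cref{lem:diffNonlinearPIDE2} in the \textbf{MFM} proof, but here obtained directly and without any lemma about differentiating under convolution integrals. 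Differentiating \cref{eq:mildsoluPDE} twice, putting one derivative on $\Phi$ in the initial-data term (picking up the integrable factor $\sqrt{1/(\pi t)}$) and one derivative each on $\Phi$ and on $\paren{N_1[\vecrho]}_j$ in the Duhamel term, then yields the analog of (\textbf{Inequality 2}) and hence $\rho_j(\cdot, t) \in C_b^2(\R^d)$ for all $t \in (0,T_0]$.

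I do not expect any genuine obstacle: the entire argument is routine once \cref{thm:regularityPIDE} is in hand, and the locality of $N_1$ removes the only technical ingredient (the appendix lemmas on differentiating nonlocal terms) that made the \textbf{MFM} case delicate. The sole points requiring care are the bootstrap ordering, namely securing the bounded first derivatives before estimating $\partial_{x_i}\paren{N_1[\vecrho]}_j$, and the bookkeeping of the singular-but-integrable heat-kernel factor $1/\sqrt{t}$, which correctly reproduces the $C_2 + C_3/\sqrt{\tau}$ blowup as $t \to 0^+$ that is exploited in the proof of \cref{thm:convTwoModels}.
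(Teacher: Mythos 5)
Your proposal is correct and is essentially the paper's own argument: the paper leaves the proof of \cref{thm:regularityPDE} implicit, stating only that it follows analogously to \cref{thm:regularityPIDE}, and your write-up is exactly that analogy carried out — Duhamel representation, the heat-kernel derivative bound $\int_{\R^d}\abs{\partial_{x_i}\Phi(x-y,t)}\,dy = \sqrt{1/(\pi t)}$, and the two bootstrap inequalities. You also correctly identify the one genuine simplification, namely that the local polynomial form of $N_1$ lets you bound $\partial_{x_i}\paren{N_1[\vecrho]}_j$ by the chain rule directly, with no need for the analogues of \cref{lem:diffNonlinearPIDE1}--\cref{lem:diffNonlinearPIDE2}.
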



\section{On Global Well-posedness }\label{S:Global}

Global existence in time of the classical solution to the \textbf{SM} \cref{Eq:density_formula_PDEs} for general reaction systems is a difficult open problem.  We refer the interested reader to the recent review article \cite{P:2010} for survey of the current state of the art. The recent papers by \cite{FMT:2020,CGV:2019,S:2018} also deal with global well-posedness of reaction-diffusion systems under various combinations of growth and mass control assumptions.

For two main reasons, the setup of this paper is only partially covered by the existing literature. First, we are dealing with non-local systems of equations (the \textbf{MFM}) while the vast majority of the literature has concentrated on local systems of equations (such as the \textbf{SM}). Second, since our main interest in this paper is to examine conditions under which the \textbf{SM} is a special case of the \textbf{MFM}, we need to be able to assume that the reaction kernels converge to delta Dirac masses, see Assumption \ref{Assume:kernelTwo}, which would then require uniform bounds with respect to this approximation (see Theorem \ref{thm:convTwoModels}). The latter precludes us from being able to work with global boundedness assumptions on the reaction kernels (see also Remark \ref{R:NolenPaper} for a more detailed explanation of this).

In this section we demonstrate that global well-posedness can be proven to hold for our non-local \textbf{MFM} \cref{Eq:density_formula_PIDEs} in, at least, the cases of $A+B\rightleftarrows C+D$ and $A+B\rightleftarrows C$ (the latter under specific choices for the placement measures). The case of $A+B\rightleftarrows C+D$ can be addressed using the results of \cite{FMT:2020}, using the mass conservation property of the non-local system, see Lemma \ref{lem:FourSepciesCondition}. This approach, however, fails for the non-local $A+B\rightleftarrows C$ reaction. We address the non-local $A+B\rightleftarrows C$ reaction in Lemma \ref{lem:ReversibleCondition} by modifying an argument of \cite{P:2010} to deal with the non-local nature of the equations, which uses in an essential way that two of the equations have only linear growth. We stress here that the two methods are different in nature; the first is based on mass conservation properties,  while the second is based on finding a linear combination of the equations with linear growth.

It is an interesting open problem to address global well-posedness in a more unifying way generally for the \textbf{MFM} \cref{Eq:density_formula_PIDEs}. However, this is outside the scope of this paper, whose primary focus is elucidating how the commonly used \textbf{SM} approximates the rigorous large population limit of PBSRD systems given by the \textbf{MFM}.

We begin in \cref{lem:FourSepciesCondition} by addressing the $A+B\rightleftarrows C+D$ reaction network. In Remark \ref{R:NolenPaper} we make a number of comments on alternative approaches in the literature that might be used to establish global existence.

\begin{lemma}\label{lem:FourSepciesCondition}
For the $A+B\rightleftarrows C+D$ reaction, both the \textbf{SM} and the \textbf{MFM} are globally well posed, i.e. \cref{thm:regularityPDE} and \cref{thm:regularityPIDE} hold for all $T_0 < \infty$.
\end{lemma}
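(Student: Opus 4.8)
The plan is to reduce both models for the specific network $A+B\rightleftarrows C+D$ to the setting of quadratic reaction--diffusion systems with mass control treated in \cite{FMT:2020}, and then to promote the resulting a priori bounds into global existence via the local theory of \cref{thm:localEU_PIDE}. Here $J=4$, with species $(S_1,S_2,S_3,S_4)=(A,B,C,D)$, and $L=2$ reactions: the forward reaction $A+B\to C+D$ with kernel $K_1^\epsilon$ and the backward reaction $C+D\to A+B$ with kernel $K_2^\epsilon$, both governed by \cref{Assume:measureTwo2Two}. First I would write out the nonlinearity $N_2[\vecrho]$ explicitly. Using \cref{Assume:kernelTwo} and \cref{Assume:measureTwo2Two}, every term reduces to a product of one pointwise factor and one convolution factor; for instance the loss of $A$ is $-\rho_A(x)\,(K_1^\epsilon*\rho_B)(x)$ and the gain of $C$ is $p\,\rho_A(x)(K_1^\epsilon*\rho_B)(x)+(1-p)\rho_B(x)(K_1^\epsilon*\rho_A)(x)$, with analogous expressions for $D$ and for the backward reaction.

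Second, I would verify the two structural hypotheses that \cite{FMT:2020} requires. Quasi-positivity is immediate: the only negative contribution to $(N_2[\vecrho])_j$ is the loss term for species $j$, which carries a factor $\rho_j(x)$ and hence vanishes when $\rho_j(x)=0$, while every gain term is a nonnegative product of nonnegative quantities. The crucial point is \emph{pointwise mass control}: because each reaction conserves particle number and the placement measure of \cref{Assume:measureTwo2Two} deposits products exactly at the reactant positions, the forward loss $-\rho_A\overline{\rho_B}-\rho_B\overline{\rho_A}$, with $\overline{\rho_B}=K_1^\epsilon*\rho_B$, is cancelled pointwise by the total forward gain $\rho_A\overline{\rho_B}+\rho_B\overline{\rho_A}$ into $C$ and $D$, and similarly for the backward reaction, so that $\sum_{j=1}^4 (N_2[\vecrho])_j(x)\equiv 0$. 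Integrating the summed equations over $\R^d$ (the diffusion terms integrate to zero) then yields conservation of total mass and an a priori $L^1$ bound, the standard starting point of the mass-control theory.

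Third --- and this is where the non-locality must be confronted --- I would push these $L^1$ bounds up to uniform $L^\infty$ bounds by adapting the duality/$L^p$-bootstrap of \cite{FMT:2020} to the convolution nonlinearities. The key observation making this possible is that, by Young's inequality and \cref{Assume:kernelTwo}, convolution against $K_\ell^\epsilon$ is a bounded operator on every $L^p(\R^d)$ with operator norm $\|K_\ell^\epsilon\|_{L^1}=k_\ell$ \emph{independent of} $\epsilon$. Consequently each nonlocal reaction term obeys the same quadratic $L^p$-estimates as its local counterpart, and the quadratic-growth-plus-mass-control structure underlying \cite{FMT:2020} carries over, producing $L^\infty$ bounds on $[0,T_0]$ that are uniform in both $t\le T_0$ and $\epsilon$ --- the uniformity in $\epsilon$ being exactly what \cref{thm:convTwoModels} demands. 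For the \textbf{SM} the nonlinearity $f_j=\sum_\ell \kappa_\ell(\beta_{\ell j}-\alpha_{\ell j})\,\Pi_k\rho_k^{\alpha_{\ell k}}$ is already local, satisfies $\sum_j f_j\equiv 0$ together with quasi-positivity, and is therefore covered directly by \cite{FMT:2020}.

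Finally, I would close the argument by continuation: \cref{thm:localEU_PIDE} produces a solution on a maximal interval on which, if finite, $\|\vecrho(t)\|_{\X}$ must blow up; the uniform $L^\infty$ bounds just obtained preclude this, so the solution --- together with the $C^2_b$ regularity from \cref{thm:regularityPIDE} --- extends to every $T_0<\infty$, giving the claimed global well-posedness for both models. I expect the main obstacle to be the third step: verifying in detail that the $L^p$ energy and duality estimates of \cite{FMT:2020}, developed for local systems, remain valid once pointwise products are replaced by products involving convolutions, and in particular that the bootstrap introduces no $\epsilon$-dependent constants.
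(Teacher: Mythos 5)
Your proposal is correct and follows essentially the same route as the paper: both write out the nonlocal nonlinearity for $A+B\rightleftarrows C+D$, verify quasi-positivity, pointwise mass conservation $\sum_{j=1}^4 (N[\vecrho])_j \equiv 0$, and quadratic growth with constants independent of $\epsilon$ (via $\|K_\ell^\epsilon\|_{L^1}=k_\ell$ and Young's inequality), and then invoke \cite{FMT:2020}. The only cosmetic difference is that the paper packages these as conditions (A1)--(A3) with an $L^\infty$-norm version of the growth condition and simply remarks that an examination of the proofs in \cite{FMT:2020} shows they carry over to the nonlocal setting, which is exactly the adaptation step you flag as the main obstacle.
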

\begin{proof}
By the results of \cite{FMT:2020},  global well-posedness will follow if the following additional conditions hold
\begin{description}
\item[(A1) Local Lipschitz and Preservation of Positivity] For all $j = 1, \cdots, J$, $\paren{N[\vecrho]}_j$ is locally Lipschitz and  $\paren{N[\vecrho]}_j \geq 0$ for all $\vecrho \geq 0$ with $\rho_j = 0$,
\item[(A2) Mass Control] $\sum_{j = 1}^J w_j\times\paren{N[\vecrho]}_j \leq C_0 + C_1\sum_{j = 1}^J \rho_j$, for $\vecrho \geq 0$, some constants $C_0$ and $C_1$, and some set of $\{w_j\}_{j=1,\cdots,J}$ with $w_j > 0$,
\item [(A3) (Super)-Quadratic Growth] $|\paren{N[\vecrho]}_j| \leq C(1+||\vecrho||^{2+\epsilon})$, for $\vecrho \geq 0$, some constant $C, \epsilon > 0$ and all $j$,
\end{description}
where $N[\vecrho]$ represents the nonlinear reaction term of a general reaction-diffusion equation, including our \textbf{SM} and \textbf{MFM}. Here, in condition (A3), $||\, \cdot\,||$ represents the uniform norm in space, i.e. $||\vecrho|| = \sup_{j = 1, \cdots, J}\sup_{x\in\R^d} |\rho_j(x)|$. This is slightly different from what is assumed in \cite{FMT:2020}, but an examination of the proofs in \cite{FMT:2020} shows that the argument goes through in this norm.

Let us denote by $\rho_1(x, t), \cdots, \rho_4(x, t)$ the concentration at position $x$ and time $t$ for species A, B, C, D respectively. Let $\vecrho = (\rho_1, \rho_2, \rho_3, \rho_4)^T$. Without loss of generality, we may assume that once the forward reaction happens, A becomes C and B becomes D, whereas once the backward reaction happens, C becomes A and D becomes B. Then the reaction term $N[\vecrho] = \paren{\paren{N[\vecrho]}_1, \paren{N[\vecrho]}_2 , \paren{N[\vecrho]}_3, \paren{N[\vecrho]}_4  }^T$ for the \textbf{MFM} Model is
\begin{equation}
N[\vecrho](x, t) =
\begin{pmatrix}
-  \paren{\int_{\R^d} K_{1}(x, y) \rho_2(y, t) \, dy} \rho_1(x, t) + \paren{\int_{\R^d}K_{2}(x, w)\rho_4(w, t)\, dw} \rho_3(x, t)\\[2mm]
-  \paren{\int_{\R^d} K_{1}(y, x) \rho_1(y, t) \, dy} \rho_2(x, t) + \paren{\int_{\R^d}K_{2}(z, x)\rho_3(z, t)\, dw} \rho_4(x, t)\\[2mm]
+ \paren{\int_{\R^d} K_{1}(x, y) \rho_2(y, t) \, dy} \rho_1(x, t) - \paren{\int_{\R^d}K_{2}(x, w)\rho_4(w, t)\, dw} \rho_3(x, t)\\[2mm]
+  \paren{\int_{\R^d} K_{1}(y, x) \rho_1(y, t) \, dy} \rho_2(x, t) - \paren{\int_{\R^d}K_{2}(z, x)\rho_3(z, t)\, dw} \rho_4(x, t)\\[2mm]
\end{pmatrix}.
\end{equation}
Preservation of positivity is satisfied by the non-negativity of the reaction kernel $K_1(x, y)$ and $K_2(z, w)$. A local Lipschitz condition is shown in \cref{lem:reactionPIDE}, while  $\sum_{j = 1}^4 \paren{N[\vecrho]}_j = 0$ gives (A2). By symmetry, it is sufficient to show (A3) when $j = 1$.
\begin{align*}
|\paren{N[\vecrho]}_1| & = \abs{ -\paren{\int_{\R^d} K_{1}(x, y) \rho_2(y, t) \, dy} \rho_1(x, t) + \paren{\int_{\R^d}K_{2}(x, w)\rho_4(w, t)\, dw} \rho_3(x, t)},\\
&\leq \paren{\int_{\R^d} \hat{K}_{1}(x - y) |\rho_2(y, t)| \, dy} |\rho_1(x, t)| + \paren{\int_{\R^d}\hat{K}_{2}(x - w)|\rho_4(w, t)|\, dw} |\rho_3(x, t)|,\\
&\leq \paren{||\hat{K}_1||_{L^1}  + ||\hat{K}_2||_{L^1}} ||\vecrho||^2.
\end{align*}

Thus, conditions (A1)-(A3) are satisfied for the \textbf{MFM} Model. Note, all the constants involved do not depend on $\epsilon$. The same argument works for the \textbf{SM} model by choosing the reaction kernels to be delta functions.
\end{proof}

\begin{remark}\label{R:NolenPaper}
For completeness we mention here that apart from \cite{FMT:2020},  \cite{CGV:2019} and \cite{S:2018} have also addressed the global well-posedness question of general  reaction-diffusion systems with condition (A1), (A2), and (A3) with $C_1 = 0$ plus an additional entropy condition.

In addition, in principle, one could also use the regularized effects of the bimolecular reaction kernel in order to prove well-posedness of the $A+B\rightleftarrows C+D$ system, as in Section 7 in \cite{LLN:2019}. However, this method of proof requires boundedness of the reaction kernel whereas for the \textbf{SM}, the reaction kernel term is essentially  a Dirac delta function, which is rather singular. The idea of the method of proof in \cite{LLN:2019} is that under the conditions (A1), (A2) and (A3),  $||\rho_j(x, t)||_{L^1(\R^d)}$ is bounded for any time $t\in [0, \infty )$ for all $j = 1, \cdots, J$. Then the main argument of \cite{LLN:2019} is that by the prior $L^1$ bound on the solutions and the $L^\infty$ bound on the reaction kernels, the regularized (convoluted) quadratic growth term can be bounded by linear growth of the solutions, and thereby admits global existence. However, as $\epsilon\to0$, the $L^\infty$ bound of the reaction kernel  unfortunately is not uniform in the approximation parameter $\epsilon$. Therefore, given that our aim is to compare the two models, this method of proof is not immediately applicable to our case.

As such, we found that for our problem of interest using \cite{FMT:2020} was more straight-forward for the non-local $A+B\rightleftarrows C+D$ system.
\end{remark}

We now address global existence for the $A+B\rightleftarrows C$ reaction network.
\begin{lemma}\label{lem:ReversibleCondition}
For the case of the $A+B\rightleftarrows C$ reaction, by choosing the binding placement measure $m_1(z\,|\, x, y) = p\delta(z - x) + (1-p)\delta(z-y)$, for some $p \in[0, 1]$ and assuming the detailed balance condition \cref{eq:dbcondit} in the \textbf{MFM}, solutions to both the \textbf{SM} and the \textbf{MFM} exist globally, i.e. \cref{thm:regularityPDE} and \cref{thm:regularityPIDE} hold for all $T_0 < \infty$.
\end{lemma}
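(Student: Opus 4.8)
The plan is to prove a priori $L^\infty$ bounds on $[0,T_0]$ for every $T_0<\infty$, which combined with the local existence and regularity results (\cref{thm:localEU_PIDE} and \cref{thm:regularityPIDE}, together with their \textbf{SM} analogues \cref{thm:localEU_PDE} and \cref{thm:regularityPDE}) and the standard continuation criterion for the mild-solution construction immediately yields global well-posedness. Write $A,B,C$ for the three concentration fields. The first two steps are routine: (i) \emph{positivity}, each reaction term is quasi-positive (every loss term carries a factor of the vanishing species), so nonnegative initial data produce nonnegative solutions; and (ii) \emph{mass control}, integrating the equations and using that the placement measures and $m_2^\epsilon$ integrate to one gives $\tfrac{d}{dt}\int_{\R^d}(A+C)\,dx=\tfrac{d}{dt}\int_{\R^d}(B+C)\,dx=0$, so $\|A\|_{L^1}$, $\|B\|_{L^1}$, $\|C\|_{L^1}$ stay bounded uniformly in time.

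The structural heart of the argument exploits the special placement measure. Writing the forward loss densities $L_A(x)=A(x)\int_{\R^d}K_1^\epsilon(x-y)B(y)\,dy$ and $L_B(y)=B(y)\int_{\R^d}K_1^\epsilon(x-y)A(x)\,dx$, the choice $m_1(z\,|\,x,y)=p\,\delta(z-x)+(1-p)\,\delta(z-y)$ makes the nonlocal production term for $C$ collapse to the purely local expression $p\,L_A(z)+(1-p)\,L_B(z)$. Consequently the linear combination $w:=p\,A+(1-p)\,B+C$ has its quadratic reaction contributions cancel exactly, and with the detailed balance relation \cref{eq:dbcondit} one checks that the surviving reaction term in the equation for $w$ is \emph{linear} in $C$, coming only from the reverse dissociation governed by $m_2^\epsilon$. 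This is the nonlocal analogue of the fact that in the local \textbf{SM} the quantities $A+C$ and $B+C$ solve pure diffusion equations; it is the precise sense in which two of the equations have only linear growth.

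With this in hand I would follow the duality strategy of \cite{P:2010}. Because the diffusivities differ, $w$ does not solve a scalar heat equation; instead $\partial_t w-\Delta\!\big(\theta\,w\big)=g$, where $\theta:=(p\,D_1 A+(1-p)\,D_2 B+D_3 C)/w$ is bounded between $\min_j D_j$ and $\max_j D_j$ by positivity, and $g$ is linear in $C$, hence (since $0\le C\le w$ and the reverse operator is mass preserving) controlled by a constant times $w$ plus the $L^1$-bounded data. Pierre's $L^p$ duality lemma, applied to this scalar equation with bounded elliptic coefficient $\theta$, then upgrades the $L^1$ mass bound to an $L^p$ a priori bound on $w$, and thus on $C$, on every finite interval, for each $p<\infty$. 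A final bootstrap closes the estimate: once $C\in L^p$ for large $p$, the equations for $A$ and $B$ have sources linear in $C$ together with nonpositive sinks, so the smoothing of the heat semigroup (as quantified in \cref{thm:regularityPIDE}) places $A,B$, and then the products $L_A,L_B$ feeding $C$, into successively better spaces. Here Young's inequality $\|K_1^\epsilon*B\|_{L^p}\le k_1\|B\|_{L^p}$ keeps the nonlinear feedback controlled uniformly in $\epsilon$, so the iteration terminates with $A,B,C\in L^\infty$ uniformly on $[0,T_0]$. The identical computation with $K_1^\epsilon,m_2^\epsilon$ replaced by their delta-limits handles the \textbf{SM}.

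The main obstacle is precisely the nonlocality: unlike the local case, production of $C$ does not cancel the loss of $A$ (or $B$) pointwise, so there is no exact conservation law for $A+C$ at the level of densities, and the mass-control route of \cref{lem:FourSepciesCondition} (which relied on $\sum_j(N[\vecrho])_j$ having a favorable sign) is unavailable. The placement measure $m_1=p\,\delta(z-x)+(1-p)\,\delta(z-y)$ is exactly what makes the cancellation reappear inside the single combination $w$, and the remaining difficulty is to push the duality estimate through with the nonlocal reverse operator built from $m_2^\epsilon$ in place of the local $\kappa_2 C$ term, while verifying that all constants can be taken uniform in $\epsilon$, as demanded by \cref{thm:convTwoModels}.
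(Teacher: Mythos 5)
Your structural analysis is exactly the one the paper exploits: positivity, the observation that $m_1(z\,|\,x,y)=p\,\delta(z-x)+(1-p)\,\delta(z-y)$ together with detailed balance makes the production of $C$ local (equal to $p\,L_A+(1-p)\,L_B$) so that the quadratic terms cancel in the combination $pA+(1-p)B+C$ and only terms linear in $C$ survive (this is precisely \cref{eq:massTypeIneq}), followed by a duality argument in the spirit of \cite{P:2010} and a parabolic-regularity bootstrap. The gap is in how you implement the duality. You rewrite the $w$-equation as $\partial_t w-\Delta(\theta w)=g$ with $\theta=(pD_1A+(1-p)D_2B+D_3C)/w$ known only to be bounded between $\min_j D_j$ and $\max_j D_j$, and invoke ``Pierre's $L^p$ duality lemma'' to conclude $w\in L^p$ for \emph{every} $p<\infty$. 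No such lemma exists at that generality: for a merely bounded measurable coefficient $\theta$, the duality argument needs $L^{p'}$ maximal regularity for the dual problem $-\partial_t\phi-\theta\Delta\phi=\Theta$, and this is available only at $p'=2$ (via the integration-by-parts trick underlying Lemma 3.4 of \cite{P:2010}) or in a small Meyers neighborhood of $2$. Reaching all $p$ requires continuity/VMO regularity of $\theta$ with quantitative control of its modulus --- but $\theta$ is built from the unknown solution, so any such constant is circular exactly where you need it, near the putative blow-up time. And an $L^2$ bound alone does not close your final bootstrap in general dimension: each pass ``$C\in L^{p}\Rightarrow A,B$ gain integrability by heat smoothing $\Rightarrow L_A,L_B\in L^{q/2}\Rightarrow C$ gains integrability'' improves the exponent only when the current $p$ is large enough relative to $d$, so starting from $p=2$ the iteration terminates only in low dimension; in general $d$ this is the notorious difficulty with quadratic systems that \cite{CGV:2019,S:2018,FMT:2020} were written to overcome. (A smaller inaccuracy: the surviving source is $-2p(1-p)K_2C+2p(1-p)\Kd\,(K_1*C)$, and the nonlocal term is not pointwise ``a constant times $w$''; it is only controlled in $L^p$ via Young's inequality, which is why it must be carried inside the dual problem.)

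The paper avoids the variable coefficient entirely, and this is the fix you should adopt. In \cref{lem:duality} the dual problem is posed with the \emph{constant} coefficient $\DC$ (plus the nonlocal lower-order terms $\theta_1\phi+\theta_2(K_1*\phi)$), for which constant-coefficient heat-potential estimates give $L^q$ bounds for every $q\in(1,\infty)$, uniformly in $\epsilon$ since only $\norm{K_1}_{L^1}=k_1$ enters. The price is that the diffusivity mismatches $(\DC-\DA)\Delta\phi$ and $(\DC-\DB)\Delta\phi$ appear paired against $\rho_1,\rho_2$, so the duality estimate is not self-contained but reads $\norm{\rho_3}_{L^p(Q_t)}\le C\paren{1+\norm{\rho_1}_{L^p(Q_t)}+\norm{\rho_2}_{L^p(Q_t)}}$ on $Q_t=\R^d\times[0,t]$. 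This loop is then closed by exactly the linear-growth structure you identified but did not feed back into the estimate: dropping the nonpositive quadratic sinks gives $\norm{\rho_i(t)}_{L^p}\le\norm{\rho_i(0)}_{L^p}+C\int_0^t\norm{\rho_3(s)}_{L^p}\,ds$ for $i=1,2$ (Young's inequality handling $K_1*\rho_3$), and Gronwall yields $\rho_1,\rho_2,\rho_3\in L^p(Q_T)$ for \emph{all} $p\in(1,\infty)$ in one stroke, in every dimension; your concluding regularity step (polynomial reaction terms, $L^p$ parabolic theory, Morrey) then finishes as in the paper.
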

\begin{proof}
We only address the \textbf{MFM} model as the situation for the local model \textbf{SM} is immediately covered by the results of both \cite{P:2010} and \cite{FMT:2020}.

Let us denote by $\rho_1(x, t), \cdots, \rho_3(x, t)$ the concentration at position $x$ and time $t$ for species A, B, C respectively. Let $T$ be the maximal existence time for $L^\infty$ solutions. The \textbf{MFM} Model is
\begin{align}\label{eq:ABC1}
\partial_t\rho_1(x, t) &= \DA\lap \rho_1(x, t) -  \left(\int_{\R^d} K_{1}(x, y) \rho_2(y, t) \, dy\right) \rho_1(x, t) + \int_{\R^d}K_{2}(z)\left( \int_{\R^d} m_{2}(x,y|z) dy \right)\rho_3(z, t)\, dz \nonumber\\
\partial_t \rho_2(y, t)&=  \DB\lap \rho_2(y, t) - \left(\int_{\R^d} K_{1}(x, y) \rho_1(x, t) \, dx\right) \rho_2(y, t)+ \int_{\R^d}K_{2}(z)\left( \int_{\R^d} m_{2}(x,y|z) dx \right)\rho_3(z, t)\, dz \nonumber\\
\partial_t \rho_3(z, t)&=  \DC\lap \rho_3(z, t) -  K_{2}(z)\rho_3(z, t) + \int_{\R^{2d}} K_{1}(x, y)m_1(z | x, y) \rho_1(x, t) \rho_2(y, t) \, dx \, dy.\nonumber\\
\end{align}

Recall that the detailed balance condition gives $$\hat{K}_1(x-y)m_1(z | x, y) \times \Kd = K_2(z)m_2(x,y|z),$$
where $\Kd = k_2/k_1$ is the equilibrium dissociation constant of the reaction and $K_2(z) = k_2$ is a constant function. Let us denote $(f*g)(x) = \int_{\R^d} f(x-y)g(y)\, dy$ as the convolution. Utilizing the detailed balance condition and the explicit form $m_1(z\,|\, x, y) = \alpha\delta(z - x) + (1-\alpha)\delta(z-y)$, \cref{eq:ABC1} can be further simplified to
\begin{align}
\label{eq:ABCL1}
\partial_t\rho_1 &= \DA\lap \rho_1-  (\hat{K}_1*\rho_2)   \rho_1+ \alpha K_2\times \rho_3+  (1-\alpha)\Kd\times (\hat{K}_1*\rho_3),\\
\label{eq:ABCL2}
\partial_t \rho_2&=  \DB\lap \rho_2 - (\hat{K}_1*\rho_1) \rho_2+ \alpha\Kd\times ( \hat{K}_1*\rho_3)  + (1-\alpha)K_2\times\rho_3 ,\\
\label{eq:ABCL3}
\partial_t \rho_3&=  \DC\lap \rho_3-  K_{2}\rho_3+ \alpha(\hat{K}_{1}* \rho_2)\rho_1+ (1-\alpha)(\hat{K}_{1}* \rho_1) \rho_2 . \\ \nonumber
\end{align}
By summing up \cref{eq:ABCL1,eq:ABCL2,eq:ABCL3}, we see immediately
\begin{align}\label{eq:massTypeIneq}
\partial_t\rho_3-  \DC\lap \rho_3  &= -\alpha\paren{ \partial_t\rho_1 - \DA\lap \rho_1} - (1-\alpha)\paren{ \partial_t\rho_2- \DB\lap \rho_2} \nonumber\\
&\qquad + \alpha^2 K_2\times \rho_3 +(1-\alpha)^2K_2\times\rho_3 + 2\alpha(1-\alpha)\Kd\times (\hat{K}_1*\rho_3) - K_{2}\rho_3.
\end{align}

Let $C$ and $C_1$ be a generic constant that only depends on $p, \alpha, \Kd, ||\hat{K}_1||_{L^1}, K_2, \DA, \DB, \DC, \rho_1(x, 0), \rho_2(x, 0), \rho_3(x, 0)$. We'll always assume that for all $i = 1, 2, 3$, $||\rho_i(x, 0)||_{L^p} < \infty$ for all $1\leq p\leq \infty$ and $\rho_i(x, 0)\geq 0$, for all $x\in\R^d$. By the positivity preserving property of \cref{eq:ABC1}, we actually know that $\rho_i(x, t)\geq 0$, for all $t\in[0, T]$. Then \cref{eq:ABCL1} gives
\begin{equation}
\partial_t\rho_1 - \DA\lap \rho_1 \leq \alpha K_2\times \rho_3+  (1-\alpha)\Kd\times (\hat{K}_1*\rho_3),\nonumber
\end{equation}
from which we obtain that for all $p\in(1, \infty)$,
\begin{equation}
||\rho_1(t)||_{L^p} \leq ||\rho_1(0)||_{L^p} + C \int_0^t[ || \rho_3(s)||_{L^p}+  ||\hat{K}_1*\rho_3(s)||_{L^p}]\, ds.\nonumber
\end{equation}
Using Young's inequality, $||\hat{K}_1*\rho_3||_{L^p}\leq ||\hat{K}_1||_{L^1}||\rho_3||_{L^p}$, it further simplifies to
\begin{equation}\label{eq:ABCL1Lp}
||\rho_1(t)||_{L^p} \leq ||\rho_1(0)||_{L^p} + C \int_0^t || \rho_3(s)||_{L^p}\, ds.
\end{equation}
Similarly, \cref{eq:ABCL2} gives
\begin{equation}\label{eq:ABCL2Lp}
||\rho_2(t)||_{L^p} \leq ||\rho_2(0)||_{L^p} + C \int_0^t || \rho_3(s)||_{L^p}\, ds.
\end{equation}
Let $Q_t := \R^d \times \brac{0,t}$. \cref{lem:duality} (an extension of  Lemma 3.4 in \cite{P:2010}) gives that
\begin{equation}\label{eq:dualEstimateFor3}
||\rho_3||_{L^p(Q_t)}\leq C\paren{1 + ||\rho_1||_{L^p(Q_t)} +||\rho_2||_{L^p(Q_t)} }.
\end{equation}
This implies
\begin{align}
\paren{\int_0^t || \rho_3(s)||_{L^p}\, ds}^p &\leq t^{p-1} \int_{Q_t} |\rho_3|^p\leq C\paren{1 + ||\rho_1||_{L^p(Q_t)} +||\rho_2||_{L^p(Q_t)} }^p\nonumber\\
&\leq C\paren{1 + ||\rho_1||_{L^p(Q_t)}^p +||\rho_2||_{L^p(Q_t)}^p }\nonumber.
\end{align}
Therefore, \cref{eq:ABCL1Lp,eq:ABCL2Lp} can be combined to give
\begin{align}
||\rho_1(t)||_{L^p}^p + ||\rho_2(t)||_{L^p}^p &\leq C_1+ C \paren{\int_0^t || \rho_3(s)||_{L^p}\, ds}^p\nonumber\\
&\leq C_1+ C\paren{1 + \int_0^t ||\rho_1(s)||_{L^p}^p +||\rho_2(s)||_{L^p}^p \, ds}.\nonumber
\end{align}

By Gronwall's inequality, we know that $\rho_1, \rho_2\in L^p(Q_T)$, and furthermore $\rho_3 \in L^p(Q_T)$ by \cref{eq:dualEstimateFor3}, for all $p\in (1, \infty)$. Notice that since the reaction terms in \cref{eq:ABCL1,eq:ABCL2,eq:ABCL3} are all polynomial (with a convolution that does not effect the $L^p$ bound by Young's inequality), they are bounded in $L^p(Q_T)$ for all $p\in(1, \infty)$. Morrey's inequality in $\R^{d+1}$ together with the $L^p$-regularity theory for parabolic operators (see Chapter IV Section 3 in \cite{Ladyzenskaja}) implies that $\rho_1, \rho_2, \rho_3\in L^\infty(Q_T)$ , and therefore $T = \infty$ (See Lemma 1.1 in \cite{P:2010} ).
\end{proof}

\begin{lemma}\label{lem:duality}
For $\rho_1, \rho_2, \rho_3$ as used in \cref{lem:ReversibleCondition} satisfying
\begin{align}\label{eq:massTypeIneq2}
\partial_t\rho_3-  \DC\lap \rho_3 + \theta_1\rho_3 + \theta_2(\hat{K}_1*\rho_3) &= -\alpha \paren{ \partial_t\rho_1 - \DA\lap \rho_1} - (1-\alpha)\paren{ \partial_t\rho_2- \DB\lap \rho_2},
\end{align}
on $Q_T := \R^d\times[0, T]$ for some $\theta_1\in\R$ and $\theta_2<0$, it holds that
\begin{equation}
||\rho_3||_{L^p(Q_t)}\leq C\paren{1 + ||\rho_1||_{L^p(Q_t)} +||\rho_2||_{L^p(Q_t)} },\nonumber
\end{equation}
for all $t\in (0, T]$ and $p\in(1, \infty)$.
\end{lemma}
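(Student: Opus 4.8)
The plan is to prove the estimate by a parabolic duality argument, adapting Pierre's method from \cite{P:2010} to accommodate the three distinct diffusivities $\DA,\DB,\DC$ and the nonlocal convolution term. Since $\rho_3\geq 0$, we have the characterization $\norm{\rho_3}_{L^p(Q_t)}=\sup\{\int_{Q_t}\rho_3\psi \,:\, \psi\geq 0,\ \norm{\psi}_{L^{p'}(Q_t)}\leq 1\}$, where $p'=p/(p-1)$, and by density it suffices to bound $\int_{Q_t}\rho_3\psi$ for smooth, compactly supported $\psi\geq 0$. For each such $\psi$ I would solve the backward dual problem
\begin{equation*}
-\partial_s w - \DC\lap w + \theta_1 w + \theta_2(K_1*w) = \psi \quad\text{on } Q_t,\qquad w(\cdot,t)=0 .
\end{equation*}
After the time reversal $\tau=t-s$ this is a forward heat equation perturbed by the bounded zeroth-order operator $v\mapsto -\theta_1 v-\theta_2(K_1*v)$, and is therefore globally well posed on $[0,t]$, so the only real content lies in quantitative estimates on $w$.

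The next step is to test the $\rho_3$-equation against $w$ and integrate over $Q_t$. Using $w(\cdot,t)=0$ to handle the time derivative, integrating the Laplacians by parts twice in space, and transferring the convolution onto $w$ (legitimate since $K_1$ is even by \cref{Assume:kernelTwo}, so $v\mapsto K_1*v$ is self-adjoint), the dual equation collapses the left-hand side and yields the identity
\begin{equation*}
\int_{Q_t}\rho_3\psi = \int_{\R^d}\brac{\rho_3(0)+\alpha\rho_1(0)+(1-\alpha)\rho_2(0)}\,w(0) + \int_{Q_t}\brac{\alpha\rho_1\,(\partial_s w+\DA\lap w)+(1-\alpha)\rho_2\,(\partial_s w+\DB\lap w)}.
\end{equation*}
The crucial point is that the special $\partial_s - D\lap$ structure of the right-hand side of \cref{eq:massTypeIneq2} lets me integrate the time- and space-derivatives off of $\rho_1,\rho_2$ and onto $w$, so that no derivatives of $\rho_1,\rho_2$ ever need to be controlled. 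Applying Hölder's inequality in $Q_t$ and on the initial slice, and recalling that $\rho_i(\cdot,0)\in L^p(\R^d)$, the proof then reduces to the three dual bounds $\norm{w(0)}_{L^{p'}(\R^d)}$, $\norm{\partial_s w+\DA\lap w}_{L^{p'}(Q_t)}$, and $\norm{\partial_s w+\DB\lap w}_{L^{p'}(Q_t)}$, each of which I must show is $\leq C\norm{\psi}_{L^{p'}(Q_t)}$.

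The main work, and the chief obstacle, is precisely these dual estimates, which I would obtain from maximal $L^{p'}$-regularity for the heat operator on $\R^d$ combined with a perturbation argument for the lower-order terms. First, representing the (time-reversed) dual problem through Duhamel's formula against the $\DC$-heat semigroup, which is a contraction on $L^{p'}$, and using Young's inequality $\norm{K_1*w}_{L^{p'}}\leq\norm{K_1}_{L^1}\norm{w}_{L^{p'}}$, a Gronwall argument on $[0,t]$ gives $\sup_{s\in[0,t]}\norm{w(s)}_{L^{p'}(\R^d)}\leq C\norm{\psi}_{L^{p'}(Q_t)}$; this controls $\norm{w(0)}_{L^{p'}}$ and hence the initial-data term, and also bounds $\norm{w}_{L^{p'}(Q_t)}$. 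Since the source $\psi-\theta_1 w-\theta_2(K_1*w)$ of the pure heat operator $-\partial_s-\DC\lap$ is then bounded in $L^{p'}(Q_t)$ by $C\norm{\psi}_{L^{p'}(Q_t)}$, the parabolic Calderón–Zygmund estimate (Chapter IV, Section 3 of \cite{Ladyzenskaja}) yields $\norm{\partial_s w}_{L^{p'}(Q_t)}+\norm{\lap w}_{L^{p'}(Q_t)}\leq C\norm{\psi}_{L^{p'}(Q_t)}$ for every $p'\in(1,\infty)$. Finally, writing $\partial_s w+\DA\lap w=(\partial_s w+\DC\lap w)+(\DA-\DC)\lap w$ and substituting $\partial_s w+\DC\lap w=-\psi+\theta_1 w+\theta_2(K_1*w)$ from the dual equation, both $\partial_s w+\DA\lap w$ and $\partial_s w+\DB\lap w$ are controlled in $L^{p'}(Q_t)$ by $C\norm{\psi}_{L^{p'}(Q_t)}$. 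I note that the signs of $\theta_1,\theta_2$ enter these bounds only through the bounded-operator constant $\abs{\theta_1}+\abs{\theta_2}\norm{K_1}_{L^1}$; the hypothesis $\theta_2<0$ is available should one instead prefer a maximum-principle route producing $w\geq 0$. Substituting the three dual bounds into the identity and taking the supremum over admissible $\psi$ then completes the proof, with $C$ depending only on the stated data.
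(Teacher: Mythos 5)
Your proposal is correct and follows essentially the same route as the paper: the identical backward dual problem with terminal data zero, the same integration-by-parts identity (yours is just the algebraically collapsed form of the paper's), the same dual estimates obtained from the mild-solution/Gronwall bound plus parabolic $L^{q}$ maximal regularity from \cite{Ladyzenskaja}, and the same conclusion by H\"older and duality. The only minor difference is that you bound the bulk terms $\partial_s w + \DA\lap w$ and $\partial_s w + \DB\lap w$ directly via Young's inequality and H\"older, observing that positivity of the dual solution (and hence the hypothesis $\theta_2<0$) is not needed for this route, whereas the paper invokes a comparison principle to obtain $\phi\geq 0$.
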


\begin{proof}
For any $t\in (0, T]$, let $\phi$ be the solution of the following dual problem
\begin{equation}\label{eq:dualProb}
-[\partial_t\phi + \DC \lap\phi] +\theta_1\phi + \theta_2(\hat{K}_1*\phi) = \Theta,
\end{equation}
with $\phi(t) = 0$, where $\Theta\geq 0$ and $\Theta\in C_0^\infty(Q_t)$. Let $q = p/(p-1)$ be the conjugate index of $p$. Then the solution to \cref{eq:dualProb} $\phi$ is smooth,  satisfies $\phi\geq 0$ and for all $t\in[0, T]$,

\begin{equation}\label{eq:dualProbLq}
||\phi_t||_{L^q(Q_t)} + || \lap\phi||_{L^q(Q_t)} + \sup_{s\in[0, t]} ||\phi(s)||_{L^q} + \sup_{s\in[0, t]} ||\hat{K}_1*\phi(s)||_{L^q}  \leq C||\Theta||_{L^q(Q_t)}.
\end{equation}

One can show that $\phi\in L^q(Q_t)$ by a fixed point argument,  $\phi\geq 0$ by comparison principle  and further get the estimates \cref{eq:dualProbLq} by investigating the mild solution form via Young's convolution inequality and $L^q$ estimates of heat potential  (see Chapter IV Section 3 in \cite{Ladyzenskaja}).

Multiply $\partial_t\rho_1-  \DA\lap \rho_1$ by $\phi$ and integrate by parts on $Q_t$, we find
\begin{align}\label{eq:intByParts}
\int_{Q_t} \paren{\partial_t\rho_1-  \DA\lap \rho_1}\phi \,dt\, dx &= \int_{\R^d} [\rho_1(t)\phi(t) - \rho_1(0)\phi(0)] \,dx - \int_{Q_t} \paren{\partial_t\phi +  \DA\lap \phi}\rho_1 \,dt\,dx \nonumber\\
& =  -\int_{\R^d}  \rho_1(0)\phi(0) \,dx + \int_{Q_t} \paren{\Theta   +  (\DC -  \DA)\lap \phi - \theta_1\phi - \theta_2(\hat{K}_1*\phi)}\rho_1 \,dt \, dx.
\end{align}
Multipling \cref{eq:massTypeIneq2} by $\phi$ and integrating by parts over $Q_t$ in a similar manner as \cref{eq:intByParts}, we find
\begin{align}\label{eq:dualEstimate}
\int_{Q_t}\Theta\times\rho_3 \,dt \,dx&=  \int_{\R^d}\rho_3(0)\phi(0) \,dx + \alpha  \int_{\R^d} \rho_1(0)\phi(0)\,dx  +(1-\alpha)\int_{\R^d} \rho_2(0)\phi(0) \,dx \nonumber\\
& \qquad  -\alpha\int_{Q_t} \paren{\Theta   +  (\DC -  \DA)\lap \phi - \theta_1\phi - \theta_2(\hat{K}_1*\phi)}\rho_1\,dt \,dx \nonumber\\
&  \qquad - (1-\alpha)\int_{Q_t} \paren{\Theta   +  (\DC -  \DB)\lap \phi - \theta_1\phi - \theta_2(\hat{K}_1*\phi)}\rho_2\,dt \,dx.
\end{align}
Notice that $\rho_1, \rho_2, \rho_3, \phi\geq 0 $, then \cref{eq:dualEstimate} combined with \cref{eq:dualProbLq} gives
\begin{equation}
\int_{Q_t}\Theta\times\rho_3 \leq C ||\Theta||_{L^q(Q_t)}\paren{1 + ||\rho_1||_{L^p(Q_t)} + ||\rho_2||_{L^p(Q_t)}}\,dt \,dx.\nonumber
\end{equation}
Since the choices of $\Theta$ are arbitrary, by duality argument, we have
\begin{equation}
||\rho_3||_{L^p(Q_t)}\leq C\paren{1 + ||\rho_1||_{L^p(Q_t)} +||\rho_2||_{L^p(Q_t)} }.\nonumber
\end{equation}

\end{proof}

\begin{appendices}
\section{Lemmas for Estimating Differences Between Two Models}\label{A:appendixLemDiff}

First, we start with a key estimate presented in \cref{lem:molifier}.
\begin{lemma}\label{lem:molifier}
For any non-negative kernel $\hat{K}^\epsilon(w)\geq 0$ with $w\in\R^d$, $\epsilon$ sufficiently small,  of the type
\begin{description}
\item[(A) ] radially symmetric $\hat{K}^\epsilon(w)$ only depends on $|w|$,
\item[(B) ] renormalized integrable $|| \hat{K}^\epsilon||_{ L^1(\R^d) } = 1$,
\item[(C)] $\int_{\R^d} \hat{K}^\epsilon(w)  |w|^2 \, dw = O(\epsilon^2)$,
\end{description}
and for any function $f, g \in C^2_{b}(\R^d)$, any real number $|\alpha| < \infty$, we get as $\epsilon\rightarrow 0$
\begin{equation}
\norm{ \int_{\R^d} \hat{K}^\epsilon(x-y) f( x + \alpha (y-x))g(y)\, dy- f(x)g(x)}_{L^\infty} = || f\times g ||_{C^2_{b}(\R^d)}O(\epsilon^2),
\end{equation}
where $O(\epsilon^2)$ depends on $\alpha$.
\end{lemma}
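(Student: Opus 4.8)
The plan is to Taylor-expand the integrand around $x$ and exploit the three kernel properties (radial symmetry, unit mass, $O(\epsilon^2)$ second moment) so that the zeroth-order term reproduces $f(x)g(x)$, the first-order term vanishes by symmetry, and the remainder is controlled by the second moment of the kernel. Let me sketch how I would carry this out.

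First I would write $h(y) \Def f(x + \alpha(y-x))\,g(y)$ for fixed $x$, and note that the quantity inside the norm is
\[
\int_{\R^d} K^\epsilon(x-y)\,\bigl[h(y) - h(x)\bigr]\,dy,
\]
using property (B) that $\int K^\epsilon = 1$ to subtract off $f(x)g(x) = h(x)$. The change of variables $w = y - x$ turns this into $\int_{\R^d} K^\epsilon(-w)[h(x+w)-h(x)]\,dw = \int_{\R^d} K^\epsilon(w)[h(x+w)-h(x)]\,dw$ by radial symmetry (A). Next I would apply the second-order Taylor expansion with integral remainder,
\[
h(x+w) - h(x) = \nabla h(x)\cdot w + \tfrac{1}{2}\,w^\top \Big(\int_0^1 2(1-\theta)\,\nabla^2 h(x+\theta w)\,d\theta\Big)\,w.
\]
The gradient term $\int_{\R^d} K^\epsilon(w)\,\nabla h(x)\cdot w\,dw$ vanishes: by radial symmetry of $K^\epsilon$, $\int K^\epsilon(w)\,w\,dw = 0$ componentwise (the integrand is odd). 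This is the step that upgrades the estimate from $O(\epsilon)$ to $O(\epsilon^2)$ and is the crux of why second-order convergence holds.

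What remains is the Hessian remainder term, which I would bound by
\[
\tfrac{1}{2}\,\norm{\nabla^2 h}_{L^\infty}\int_{\R^d} K^\epsilon(w)\,\abs{w}^2\,dw = \tfrac{1}{2}\,\norm{\nabla^2 h}_{L^\infty}\,O(\epsilon^2)
\]
using property (C). The chain rule gives $\nabla^2 h$ in terms of the first and second derivatives of $f$ and $g$ with factors of $\alpha$, so $\norm{\nabla^2 h}_{L^\infty} \lesssim (1+\abs{\alpha})^2\,\norm{f\times g}_{C^2_b(\R^d)}$, which accounts for the stated $\alpha$-dependence of the implied constant and the factor $\norm{f\times g}_{C^2_b(\R^d)}$ in the conclusion. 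Since $x$ was arbitrary and all bounds are uniform in $x$, taking the supremum over $x$ yields the claimed $L^\infty$ estimate.

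The main obstacle, and the only point requiring genuine care, is making the symmetry cancellation of the first-order term rigorous while keeping track of the $\alpha$-dependence through the chain rule, since $\nabla h(x) = \alpha\,(\nabla f)(x)\,g(x) + f(x)\,\nabla g(x)$ mixes the two functions and the odd-integrand argument must hold componentwise for the full vector $w$. A secondary technical point is justifying the Taylor-with-remainder estimate under only $C^2_b$ regularity and confirming that the remainder integral is finite, which follows from the assumed finiteness of the second moment of $K^\epsilon$ together with the uniform bound on $\nabla^2 h$; everything else is a routine application of the three kernel hypotheses.
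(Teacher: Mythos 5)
Your proposal is correct and follows essentially the same route as the paper's proof: a second-order Taylor expansion of $f(x+\alpha(y-x))g(y)$ about $y=x$, cancellation of the first-order term via the radial symmetry (A) (oddness of $\int K^\epsilon(w)\,w\,dw$), and control of the quadratic remainder by the second-moment bound (C), with the $\alpha$-dependence entering through the derivatives exactly as in the paper. The only cosmetic differences are your use of the integral form of the Taylor remainder (the paper uses the Lagrange/mean-value form) and your packaging of the product into a single function $h$ before differentiating, neither of which changes the argument.
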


\begin{remark}
In particular, in \cref{lem:molifier}, one can choose $g\equiv 1$ to obtain as $\epsilon\rightarrow 0$
\begin{equation}
\norm{ \int_{\R^d} \hat{K}^\epsilon(x-y) f( x + \alpha (y-x))\, dy- f(x)}_{L^\infty} = || f ||_{C^2_{b}(\R^d)}O(\epsilon^2).
\end{equation}
\end{remark}

\begin{proof}[Proof of \cref{lem:molifier}]
Assume that $\alpha \not= 0$. The case when $\alpha = 0$ can be adapted from the following proof. Notice that for any $f \in C^2_{b}(\R^d)$, any $x, y\in\R^d$, we can apply Taylor's theorem to obtain
\begin{equation}\label{eq:taylorThm}
f(y) = f(x) + \la \nabla f(x), y-x \ra +  \la y-x, D^2 f(\xi) (y-x) \ra,
\end{equation}
where $\la \cdot, \cdot \ra$ is the inner product on $\R^d$, $D^2 f(\xi)$ is the Hessen matrix at $x$ and $\xi\in B_{|y-x|}(x)$. Since $f \in C^2_{b}(\R^d)$, we can alternatively rewrite \cref{eq:taylorThm} as
\begin{equation}\label{eq:taylorApprox}
|f(y) - f(x) - \la \nabla f(x), y-x \ra| \leq ||f||_{C_b^2(\R^d)} \times |y-x|^2.
\end{equation}
Applying Taylor's theorem to $f( x + \alpha (y-x))\times g(y)$, we find
\begin{align*}\label{eq:taylorFGApprox}
&|f( x + \alpha (y-x))g(y) - f(x)g(x) - \alpha\la \nabla f(x), (y-x) \ra g(x) - \la \nabla g(x), (y-x) \ra f(x)|\\
 &\leq \paren{\alpha^2\vee 1} ||f\times g||_{C_b^2(\R^d)} \times |y-x|^2.
\end{align*}
so that
\begin{align*}
&\norm{\int_{\R^d} \hat{K}^\epsilon(x-y) f( x + \alpha (y-x))g(y)\, dy- f(x)g(x)}_{L^\infty}  \\
&
= \sup_{x\in\R^d} \abs{ \int_{\R^d} \hat{K}^\epsilon(x-y) \left[f( x + \alpha (y-x))g(y) - f(x)g(x)\right]\, dy}\\
&
\leq  \sup_{x\in \R^d} \abs{\int_{\R^d} \hat{K}^\epsilon(x-y) \left[f( x + \alpha (y-x))g(y) - f(x)g(x) - \alpha\la \nabla f(x), (y-x) \ra g(x) -\la \nabla g(x),  (y-x) \ra f(x) \right] \, dy} \\
&
\qquad+   \sup_{x\in \R^d} \abs{\int_{\R^d} \hat{K}^\epsilon(x-y) \left(\alpha\la \nabla f(x), (y-x) \ra g(x) +\la \nabla g(x),  (y-x) \ra f(x) \right) \, dy} \\
&
\leq  \sup_{x\in \R^d} \int_{\R^d} \hat{K}^\epsilon(x-y) \paren{\alpha^2\vee 1} ||f\times g||_{C_b^2(\R^d)} \times |y-x|^2 \, dy  \qquad\text{( by property (A) of the reaction kernel )}\\
&
\leq  \paren{\alpha^2\vee 1} ||f\times g||_{C_b^2(\R^d)} \int_{\R^d} \hat{K}^\epsilon(w)  |w|^2 \, dw , \qquad\text{( by property (C) of the reaction kernel )}\\
&
\leq   ||f\times g||_{C_b^2(\R^d)} O(\epsilon^2),
\end{align*}
where we note that in the final expression the $O(\epsilon^2)$ term depends on $\alpha$.
\end{proof}

\begin{lemma}\label{lem:estimates1}
\begin{equation}\label{eq:estimates1}
\Lambda^{\ell, j}(\tau)   \leq 2C\paren{ \max_{j = 1, \cdots, J}||\rho_j(x, \tau) - \rho_j^\epsilon(x, \tau) ||_{L^\infty} + \paren{C_2 + \frac{C_3}{\sqrt{\tau}}}O(\epsilon^2)},
\end{equation}
where $C, C_2, C_3$ is defined in the proof of \cref{thm:convTwoModels} and independent of $\epsilon$.
\end{lemma}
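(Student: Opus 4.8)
The plan is to bound $\Lambda^{\ell,j}(\tau)$ term-by-term in the sum over $r$, splitting according to the order of the reaction $\mathscr{R}_\ell$, and in each summand reduce the difference to two pieces: (a) a product of solution differences controlled by an elementary Lipschitz-type estimate, and (b) a smoothing error controlled by the key estimate \cref{lem:molifier}. Throughout I will use the calibration $k_\ell = (\vec{\alpha}^{(\ell)}!)\,\kappa_\ell$ assumed in \cref{thm:convTwoModels}, the uniform solution bound encoded in the constant $C$, and the regularity bound (\textbf{Inequality 2}) from \cref{thm:regularityPIDE} and \cref{thm:regularityPDE}, which yields $\|\rho^\epsilon_j \rho^\epsilon_k\|_{C^2_b(\R^d)} \le C_2 + C_3/\sqrt{\tau}$ for the relevant products.

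First I would dispose of first-order reactions. If $\mathscr{R}_\ell$ has the form $S_i \to \cdots$ then $\alpha_{\ell j}$ is nonzero only for $j = i$, in which case $\vec{\alpha}^{(\ell)}! = 1$, $k_\ell = \kappa_\ell$, and by \cref{Assume:kernelOne} the kernel is the constant $k_\ell$. The collapsing action of $\delta_x$ leaves $k_\ell \rho^\epsilon_i(x,\tau)$, so the single $r$-summand reduces to $k_\ell \| \rho^\epsilon_i(\cdot,\tau) - \rho_i(\cdot,\tau)\|_{L^\infty}$, which is already of the desired form and carries no $O(\epsilon^2)$ contribution. The substantive case is a second-order reaction with reactant species $j$ and (possibly equal) partner reactant $j'$. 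Here $\delta_x(\tilde{x}_r^{(j)})$ fixes the $j$-coordinate at $x$ and pulls $\rho^\epsilon_j(x,\tau)$ out of the integral, so each $r$-summand of the MFM term equals $\frac{k_\ell}{\vec{\alpha}^{(\ell)}!}\,\rho^\epsilon_j(x,\tau)\int_{\R^d}\hat{K}^\epsilon_\ell(x-y)\,\rho^\epsilon_{j'}(y,\tau)\,dy$, where $\hat{K}^\epsilon_\ell = K^\epsilon_\ell/k_\ell$ has unit mass and satisfies hypotheses (A)--(C) of \cref{lem:molifier} by \cref{Assume:kernelTwo}.

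Using $k_\ell/\vec{\alpha}^{(\ell)}! = \kappa_\ell$, I would add and subtract $\kappa_\ell \rho^\epsilon_j(x,\tau)\rho^\epsilon_{j'}(x,\tau)$ and estimate the two resulting pieces separately. The smoothing piece is handled by applying \cref{lem:molifier} with $f = \rho^\epsilon_j(\cdot,\tau)$, $g = \rho^\epsilon_{j'}(\cdot,\tau)$ and $\alpha = 0$ (the exponent must be $0$ precisely because the delta already evaluates the self-density at the fixed point $x$, while the surviving convolution acts on the partner density), giving a bound $\kappa_\ell \|\rho^\epsilon_j \rho^\epsilon_{j'}\|_{C^2_b(\R^d)} O(\epsilon^2) \le \kappa_\ell (C_2 + C_3/\sqrt{\tau})O(\epsilon^2)$ with the implied constant uniform in $\tau$. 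The solution-difference piece $\kappa_\ell | \rho^\epsilon_j \rho^\epsilon_{j'} - \rho_j \rho_{j'}|$ is controlled by the elementary product bound $|ab - cd| \le |a|\,|b-d| + |d|\,|a-c|$ together with the uniform sup-norm bound on the solutions, yielding $2\kappa_\ell M \max_k \|\rho_k - \rho^\epsilon_k\|_{L^\infty}$, where $M$ denotes the common sup-norm bound appearing in the definition of $C$.

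Finally I would recombine. Summing over $r = 1,\dots,\alpha_{\ell j}$ and using the combinatorial identity $\alpha_{\ell j}\,\kappa_\ell = k_\ell$ (which holds for each admissible reactant species of an at-most-second-order reaction, since $\vec{\alpha}^{(\ell)}! = \alpha_{\ell j}$ in the cases $A+B$ and $2A$ and for first-order reactants), the solution-difference contributions total $2 k_\ell M \max_k \|\rho_k - \rho^\epsilon_k\|_{L^\infty} \le 2C \max_k \|\rho_k - \rho^\epsilon_k\|_{L^\infty}$, since $k_\ell M \le (\max_\ell k_\ell)M = C$; the smoothing contributions total $k_\ell (C_2 + C_3/\sqrt{\tau})O(\epsilon^2)$, which is absorbed into $2C(C_2 + C_3/\sqrt{\tau})O(\epsilon^2)$ after enlarging the implicit constant in $O(\epsilon^2)$. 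This reproduces exactly \cref{eq:estimates1}. I expect the main obstacle to be bookkeeping rather than analysis: correctly recognizing that $\delta_x$ collapses only the self-coordinate (forcing the $\alpha = 0$ application of the mollifier on the partner density), and verifying that the factors $1/\vec{\alpha}^{(\ell)}!$, the multiplicity of the $r$-sum, and the calibration $k_\ell = \vec{\alpha}^{(\ell)}!\,\kappa_\ell$ conspire to give precisely the constant $2C$. The blow-up $C_3/\sqrt{\tau}$ in the $C^2_b$-bound is harmless here because it is integrable in $\tau$, which is what the outer Gronwall argument in \cref{thm:convTwoModels} ultimately exploits.
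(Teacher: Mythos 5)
Your proposal is correct and follows essentially the same route as the paper's proof: the same add-and-subtract decomposition of each summand into a smoothing error plus a product-of-differences term, the same key input (\cref{lem:molifier} combined with the $C_b^2$ regularity bounds from \cref{thm:regularityPIDE} and \cref{thm:regularityPDE}), the same elementary product estimate for the difference piece, and the same calibration $k_\ell = \vec{\alpha}^{(\ell)}!\,\kappa_\ell$, which the paper implements case-by-case (choosing $k_\ell=\kappa_\ell$ or $k_\ell=2\kappa_\ell$) and you implement via the equivalent identity $\alpha_{\ell j}\kappa_\ell = k_\ell$. The only cosmetic difference is that you invoke \cref{lem:molifier} with $\alpha = 0$ and $g$ equal to the partner density, producing the bound $\norm{\rho^\epsilon_j\rho^\epsilon_{j'}}_{C^2_b(\R^d)}O(\epsilon^2)$, whereas the paper uses the $g\equiv 1$ form with $\alpha = 1$ on the convolved density and factors out the sup norm of the other density separately; both instantiations are covered by the regularity estimates assembled in the proof of \cref{thm:convTwoModels}, so the two arguments coincide in substance.
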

\begin{proof}
We'll discuss this case by case. When $\alpha_{\ell j} = 0$, $\Lambda^{\ell, j}(\tau) = 0$, and \cref{eq:estimates1} is trivially true. When $\alpha_{\ell j} = 1$, note that $\vec{\alpha}^{(\ell)}! = 1$ for the allowable types of reactions. If $\mathscr{R}_\ell$ is a first order reaction, based on  \cref{Assume:kernelOne},  $K^\epsilon_\ell(x) = k_\ell$, $x\in\R^d$,  is a constant rate function. As long as we choose $k_\ell = \kappa_\ell$,
\begin{align*}
\Lambda^{\ell, j}(\tau)
& \leq  k_\ell \max_{j = 1, \cdots, J}||\rho_j(x, \tau) - \rho_j^\epsilon(x, \tau) ||_{L^\infty}.
\end{align*}
When $\alpha_{\ell j} = 1$ and $\mathscr{R}_\ell$ is of the type $S_i + S_j \rightarrow \cdots$, $i\not= j$, \cref{Assume:kernelTwo} gives
\begin{align}\label{eq:estimates1_AB}
\Lambda^{\ell, j}(\tau)
&
=  \norm{  \paren{  \int_{\R^{d}}   \hat{K}^\epsilon_\ell(x-y) \,\rho^\epsilon_{i}(y, \tau) \, dy } \rho^\epsilon_{j}(x, \tau) - \kappa_{\ell}\rho_{i}(x, \tau)\rho_{j}(x, \tau)}_{L^\infty}\nonumber\\
&
\leq \norm{  \paren{  \int_{\R^{d}}   \hat{K}^\epsilon_\ell(x-y) \,\rho^\epsilon_{i}(y, \tau) \, dy  - \kappa_{\ell}\rho^\epsilon_{i}(x, \tau)} \rho^\epsilon_{j}(x, \tau)}_{L^\infty} \nonumber\\
&
\qquad + ||\rho^\epsilon_{j}(x, \tau)\paren{\rho^\epsilon_{i}(x, \tau) - \rho_{i}(x, \tau)}||_{L^\infty} + ||\rho_{i}(x, \tau)\paren{\rho^\epsilon_{j}(x, \tau) - \rho_{j}(x, \tau)}||_{L^\infty}\nonumber\\
&
\leq ||\rho^\epsilon_{i}(x, \tau)||_{C^2_b(\R^d)}O(\epsilon^2)\times||\rho^\epsilon_{j}(x, \tau)||_{L^\infty} + ||\rho^\epsilon_{j}(x, \tau)||_{L^\infty} ||\rho^\epsilon_{i}(x, \tau) - \rho_{i}(x, \tau)||_{L^\infty}\nonumber\\
&
\qquad + ||\rho_{i}(x, \tau)||_{L^\infty} ||\rho^\epsilon_{j}(x, \tau) - \rho_{j}(x, \tau)||_{L^\infty}\nonumber\\
&
\leq C\paren{ ||\rho^\epsilon_{i}(x, \tau)||_{C^2_b(\R^d)}O(\epsilon^2) + 2\max_{j = 1, \cdots, J}||\rho^\epsilon_{j}(x, \tau) - \rho_{j}(x, \tau)||_{L^\infty} },
\end{align}
by choosing $k_\ell =\kappa_\ell$ and applying \cref{lem:molifier} and the regularity results of the solutions in \cref{thm:regularityPDE} and  \cref{thm:regularityPIDE}. Here in the last step,  recall that $C  = \paren{\max_{\ell = 1, \cdots, L} k_\ell}  \max_{j = 1, \cdots, J} \{\sup_{\epsilon > 0} \sup_{\tau\in[0, T_0]}||\rho^\epsilon_{j}(x, \tau)||_{L^\infty} \vee \sup_{\tau\in[0, T_0]} ||\rho_{j}(x, \tau)||_{L^\infty} \}$ was assumed independent of $\epsilon$.

When $\alpha_{\ell j} = 2$, $\mathscr{R}_\ell$ should be of the type $S_j + S_j \rightarrow \cdots$,  \cref{Assume:kernelTwo} gives
\begin{align}
\Lambda^{\ell, j}(\tau)
&
=  \norm{\paren{  \int_{\R^{d}}   \hat{K}^\epsilon_\ell(x-y) \,\rho^\epsilon_{j}(y, \tau) \, dy } \rho^\epsilon_{j}(x, \tau) - 2\kappa_{\ell}\rho_{j}(x, \tau)^2}_{L^\infty}.\nonumber
\end{align}
By choosing $k_\ell =2 \kappa_\ell$ in this case,  \cref{eq:estimates1} follows closely from  \cref{eq:estimates1_AB}.
\end{proof}


\begin{lemma}\label{lem:estimates2}
\begin{equation}\label{eq:estimates2}
\Theta^{\ell, j}(\tau)   \leq 4C\paren{\max_{j = 1, \cdots, J}\norm{\rho_j(x, \tau) - \rho_j^\epsilon(x, \tau) }_{L^\infty} + \paren{C_2 + \frac{C_3}{\sqrt{\tau}}}O(\epsilon^2)},
\end{equation}
where $C, C_2, C_3$ is defined in the proof of \cref{thm:convTwoModels} and independent of $\epsilon$.
\end{lemma}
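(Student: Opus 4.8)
The plan is to mirror the case analysis used for $\Lambda^{\ell,j}$ in \cref{lem:estimates1}, now organized according to the product stoichiometry $\beta_{\ell j}$ and the allowable reaction types from \cref{Ass:MainAssumptions} that create species $j$ as a product. When $\beta_{\ell j}=0$ the term vanishes identically, so I would assume $\beta_{\ell j}\geq 1$ throughout; since $\beta_{\ell j}\leq 2$, the sum $\sum_{r=1}^{\beta_{\ell j}}$ contributes at most a factor of two, which together with the two telescoping terms below accounts for the constant $4C$. In every case the reduction is the same three steps. First, I would use the explicit placement measures together with the formal action of the $\delta$-functions to collapse the integral over $\mathbb{Y}^{(\ell)}$ and reduce the integral over $\mathbb{X}^{(\ell)}$ to a single convolution-type integral. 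Second, I would add and subtract $\kappa_\ell\,\Pi_k\Pi_s\rho^\epsilon_k(x,\tau)$, splitting $\Theta^{\ell,j}$ into a mollification error (comparing the reduced integral to its local value at $x$, with all densities carrying the superscript $\epsilon$) plus a density-difference term. Third, I would bound the mollification error by \cref{lem:molifier} and the difference by the telescoping identity $\rho^\epsilon_i\rho^\epsilon_k-\rho_i\rho_k=\rho^\epsilon_i(\rho^\epsilon_k-\rho_k)+\rho_k(\rho^\epsilon_i-\rho_i)$, using the uniform constant $C$ and the regularity bound $\|\rho^\epsilon_i\rho^\epsilon_k\|_{C^2_b}\leq C_2+C_3/\sqrt{\tau}$ recorded in the proof of \cref{thm:convTwoModels}.

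For the first-order dissociation $S_i\to S_j$ (\cref{Assume:measureOne2One}) the placement is exact, so the reduced term is simply $k_\ell\rho^\epsilon_i(x,\tau)$, and with $k_\ell=\kappa_\ell$ one gets $\Theta^{\ell,j}\leq\kappa_\ell\|\rho^\epsilon_i-\rho_i\|_{L^\infty}$ with no $O(\epsilon^2)$ contribution. For the reaction $S_i+S_k\to S_j+S_r$ (\cref{Assume:measureTwo2Two}) each product sits at a reactant position, so the reduced integral has the form $(K^\epsilon_\ell*\rho^\epsilon_i)(x)\,\rho^\epsilon_k(x)$ up to the $p,\,1-p$ weights, which is exactly the structure in \cref{eq:estimates1_AB} and is dispatched by \cref{lem:molifier} with $g\equiv 1$. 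The genuinely new case is the association $S_i+S_k\to S_j$ (\cref{Assume:measureTwo2One}): after using $m_\ell(z|u,v)=\sum_{i'}p_{i'}\delta(z-(\alpha_{i'}u+(1-\alpha_{i'})v))$ and the unimodular change of variables $(u,v)\mapsto(\alpha_{i'}u+(1-\alpha_{i'})v,\,u-v)$, the delta collapses the product position to $x$ and the reduced term becomes $\tfrac{1}{\vec{\alpha}^{(\ell)}!}\sum_{i'}p_{i'}\int_{\R^d}K^\epsilon_\ell(s)\,\rho^\epsilon_i(x+(1-\alpha_{i'})s,\tau)\,\rho^\epsilon_k(x-\alpha_{i'}s,\tau)\,ds$. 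This is not literally the integrand of \cref{lem:molifier}, since both factors are shifted; however, the Taylor argument underlying that lemma applies verbatim to the two-point product $\rho^\epsilon_i(x+\beta_1 s)\rho^\epsilon_k(x+\beta_2 s)$, because its first-order term is linear in $s$ and integrates to zero against the radially symmetric $K^\epsilon_\ell$, leaving an $O(\epsilon^2)$ remainder controlled by $\|\rho^\epsilon_i\rho^\epsilon_k\|_{C^2_b}$. Factoring out $\|K^\epsilon_\ell\|_{L^1}=k_\ell$ and using $k_\ell=\vec{\alpha}^{(\ell)}!\,\kappa_\ell$ would then produce the mollification error $\kappa_\ell\|\rho^\epsilon_i\rho^\epsilon_k\|_{C^2_b}O(\epsilon^2)$.

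The remaining case, the dissociation $S_i\to S_j+S_k$ (\cref{Assume:measureOne2Two}), is the one in which the $\epsilon$-dependence of the placement measure itself enters. Marginalizing $m^\epsilon_\ell(x',y'|z)=\rho^\epsilon(|x'-y'|)\sum_{i'}p_{i'}\delta(z-(\alpha_{i'}x'+(1-\alpha_{i'})y'))$ against $\delta_x(x')$ and integrating out the reactant position $z$ via its delta reduces the term to $k_\ell\sum_{i'}p_{i'}\int_{\R^d}\rho^\epsilon(|x-y'|)\,\rho^\epsilon_i(x+(1-\alpha_{i'})(y'-x),\tau)\,dy'$, which is a mollifier integral with kernel $\rho^\epsilon$. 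By \cref{Assume:measureMrho} this kernel is radially symmetric, normalized in $L^1$, and has second moment $O(\epsilon^2)$, so \cref{lem:molifier} (with $g\equiv 1$ and $\rho^\epsilon$ in place of $K^\epsilon$) returns the local value $\rho^\epsilon_i(x,\tau)$ up to $\|\rho^\epsilon_i\|_{C^2_b}O(\epsilon^2)$. Collecting the four cases, bounding $\kappa_\ell\leq C$ and each $C^2_b$-norm by $C_2+C_3/\sqrt{\tau}$, and summing the two telescoped differences would give \cref{eq:estimates2}. The hard part will be the association case $S_i+S_k\to S_j$: finding the correct unimodular change of variables that exposes the mollifier structure, and recognizing that the resulting two-point shifted product must be handled through the Taylor expansion behind \cref{lem:molifier} rather than by a direct application of its stated form. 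A secondary subtlety is that the $O(\epsilon^2)$ control in the dissociation case hinges on the separation density $\rho^\epsilon$, not the reaction kernel, satisfying the second-moment estimate of \cref{Assume:measureMrho}.
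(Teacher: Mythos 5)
Your proposal is correct and follows the same skeleton as the paper's proof: the case analysis over product types, the add-and-subtract of $\kappa_\ell$ times a local product of densities, the telescoping identity for the difference terms, and \cref{lem:molifier} together with the regularity bounds of \cref{thm:regularityPIDE} and \cref{thm:regularityPDE} for the mollification errors, with the same bookkeeping $k_\ell = \vec{\alpha}^{(\ell)}!\,\kappa_\ell$ and the same source of the factor $4C$. The one place you genuinely diverge is the association case $S_i + S_k \to S_j$, and even there the difference is a reparametrization of the same integral: you pass to center-of-mass/separation variables, keeping the kernel $K_\ell^\epsilon(s)$ intact but shifting both density factors, and therefore must extend the Taylor argument to the two-point product $\rho_i^\epsilon(x+(1-\alpha_{i'})s)\rho_k^\epsilon(x-\alpha_{i'}s)$; the paper instead integrates the placement delta in $x_1$, which produces the rescaled kernel $\alpha_{i'}^{-d}K_\ell^\epsilon\paren{(x-x_2)/\alpha_{i'}}$ acting on $\rho_i^\epsilon\paren{x + \tfrac{1-\alpha_{i'}}{\alpha_{i'}}(x-x_2)}\,\rho_k^\epsilon(x_2)$, which is literally the form $\int \tilde{K}^\epsilon(x-y)f(x+\alpha'(y-x))g(y)\,dy$ treated by \cref{lem:molifier}, since the rescaled kernel still satisfies hypotheses (A)--(C) with its second moment only picking up a factor $\alpha_{i'}^2$. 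The two computations coincide under the substitution $x_2 = x-\alpha_{i'}s$, so the step you flagged as the ``hard part'' is avoidable by the paper's route; conversely, your two-point Taylor extension costs nothing extra, as it is exactly the argument already used inside the proof of \cref{lem:molifier} (where $g(y)=g(x+(y-x))$ is itself a shifted factor), and your symmetric form handles the same-species case $S_i+S_i\to S_j$ and the role of $\vec{\alpha}^{(\ell)}!$ transparently. One last cosmetic difference: in the dissociation case $S_i\to S_j+S_k$ you subtract $\kappa_\ell\rho_i^\epsilon(x)$ and mollify the \textbf{MFM} solution, while the paper subtracts $\kappa_\ell\rho_i(x)$ and mollifies the \textbf{SM} solution; both functions carry the required uniform $C_b^2$ bounds, so either order of the triangle inequality works.
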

\begin{proof}

We'll again discuss this case by case. When $\beta_{\ell j} = 0$ then $\Theta^{\ell, j}(\tau) = 0$, and \cref{eq:estimates2} is trivially true.
\begin{enumerate}[wide, labelwidth=!, labelindent=0pt]
\item When  $\mathscr{R}_\ell$ is of the type $S_i \rightarrow S_j$, $i\not= j$, by plugging in  \cref{Assume:measureOne2One} and  \cref{Assume:kernelOne}, we obtain
\begin{align}
\Theta^{\ell, j}(\tau) &= \norm{k_\ell\rho^\epsilon_{i}(x, \tau) - \kappa_{\ell} \rho_{i}(x, \tau)||_{L^\infty}\leq k_\ell ||   \rho^\epsilon_{i}(x, \tau) - \rho_{i}(x, \tau)}_{L^\infty}.
\end{align}

\item When $\mathscr{R}_\ell$ is of the type $S_i + S_k\rightarrow S_j$, $i\not= k$, by plugging in  \cref{Assume:measureTwo2One} and \cref{Assume:kernelTwo}, we obtain
\begin{align}\label{eq:estimates2_ABC}
\Theta^{\ell, j}(\tau)
&
= \norm{\int_{\R^{2d}}  \hat{K}^\epsilon_\ell(x_1- x_2) m^\epsilon_\ell( x \, | x_1, x_2)  \rho^\epsilon_{i}(x_1, \tau) \rho^\epsilon_{k}(x_2, \tau)   \, dx_1\,dx_2- \kappa_{\ell} \rho_{i}(x, \tau) \rho_{k}(x, \tau)}_{L^\infty}\nonumber\\
&
\leq \norm{ \int_{\R^{2d}}  \hat{K}^\epsilon_\ell(x_1- x_2) m^\epsilon_\ell( x \, | x_1, x_2)  \rho^\epsilon_{i}(x_1, \tau) \rho^\epsilon_{k}(x_2, \tau)   \, dx_1\,dx_2- \kappa_{\ell} \rho^\epsilon_{i}(x, \tau) \rho^\epsilon_{k}(x, \tau)}_{L^\infty}\nonumber\\
&
\qquad + \kappa_{\ell}|| \rho^\epsilon_{i}(x, \tau) ||_{L^\infty}|| \rho_{k}(x, \tau) -  \rho^\epsilon_{k}(x, \tau)||_{L^\infty} + \kappa_{\ell}|| \rho^\epsilon_{i}(x, \tau) -  \rho_{i}(x, \tau)||_{L^\infty} || \rho_{k}(x, \tau)||_{L^\infty}.
\end{align}
Without loss of generality, we assume that $\alpha_i>0$ for all $1\leq i\leq I$ in the following, then by choosing $k_\ell = \kappa_\ell$, we have that
\begin{align*}
\bigg| \int_{\R^{2d}}  &\hat{K}^\epsilon_\ell(x_1- x_2) m^\epsilon_\ell( x \, | x_1, x_2)  \rho^\epsilon_{i}(x_1, \tau) \rho^\epsilon_{k}(x_2, \tau)   \, dx_1\,dx_2- \kappa_{\ell} \rho_{i}^\epsilon (x, \tau) \rho_{k}^{\epsilon}(x, \tau)\bigg|\nonumber\\
&
= \abs{\int_{\R^{2d}} \hat{K}^\epsilon_\ell(x_1- x_2) \sum_{i=1}^{I}p_i\delta\left(x-(\alpha_i x_1 +(1-\alpha_i)x_2)\right) \rho_i^\epsilon(x_1, \tau) \rho_k^\epsilon(x_2, \tau) \, dx_1 \, dx_2 - \kappa_\ell\rho_i^\epsilon(x, \tau)\rho_k^\epsilon(x, \tau)}\nonumber\\
&
\leq \sum_{i=1}^{I}p_i \abs{\int_{\R^{d}} \frac{1}{\alpha_i^d} \hat{K}^\epsilon_\ell\left(\frac{1}{\alpha_i}(x-x_2) \right) \rho_i^\epsilon\left(\frac{1}{\alpha_i}\left(x-(1-\alpha_i)x_2\right), \tau\right) \rho_k^\epsilon(x_2, \tau) \, dx_2 - k_1\rho_i^\epsilon(x, \tau)\rho_k^\epsilon(x, \tau)}\nonumber\\
&= \sum_{i=1}^{I}p_i \abs{\int_{\R^{d}} \frac{1}{\alpha_i^d}\hat{K}^\epsilon_\ell\left(\frac{1}{\alpha_i}(x-x_2) \right) \rho_i^\epsilon\left( x + \frac{(1-\alpha_i)}{\alpha_i}(x-x_2) , \tau\right) \rho_k^\epsilon(x_2, \tau) \, dx_2 - \kappa_\ell\rho_i^\epsilon(x, \tau)\rho_k^\epsilon(x, \tau)}\nonumber\\
&= ||\rho_i^\epsilon(x, \tau)\rho_k^\epsilon(x, \tau)  ||_{C^2_b(\R^d)}O(\epsilon^2),
\end{align*}
based on \cref{lem:molifier} and the regularity results  in  \cref{thm:regularityPDE} and  \cref{thm:regularityPIDE}. Substituting this back into \cref{eq:estimates2_ABC}, we obtain
\begin{align}
\Theta^{\ell, j}(\tau)
&
\leq k_{\ell}|| \rho^\epsilon_{i}(x, \tau) ||_{L^\infty}|| \rho_{k}(x, \tau) -  \rho^\epsilon_{k}(x, \tau)||_{L^\infty} + k_{\ell}|| \rho^\epsilon_{i}(x, \tau) -  \rho_{i}(x, \tau)||_{L^\infty} || \rho_{k}(x, \tau)||_{L^\infty} \nonumber\\
&
\qquad + ||\rho_i^\epsilon(x, \tau)\rho_k^\epsilon(x, \tau)  ||_{C^2_b(\R^d)}O(\epsilon^2)\nonumber\\
&\leq 2C\paren{\max_{j = 1, \cdots, J}||\rho_j(x, \tau) - \rho_j^\epsilon(x, \tau) ||_{L^\infty} + ||\rho_i^\epsilon(x, \tau)\rho_k^\epsilon(x, \tau)  ||_{C^2_b(\R^d)}O(\epsilon^2)}
\end{align}
where $C  = \paren{\max_{\ell = 1, \cdots, L} k_\ell}  \max_{j = 1, \cdots, J} \{\sup_{\epsilon > 0} \sup_{\tau\in[0, T_0]}||\rho^\epsilon_{j}(x, \tau)||_{L^\infty} \vee \sup_{\tau\in[0, T_0]} ||\rho_{j}(x, \tau)||_{L^\infty} \}$ is independent of $\epsilon$.

\item When $\mathscr{R}_\ell$ is of the type $S_i + S_i\rightarrow S_j$,  by plugging in \cref{Assume:measureTwo2One} and \cref{Assume:kernelTwo}, we obtain
\begin{equation*}
\Theta^{\ell, j}(\tau)
= \norm{ \frac{1}{2} \int_{\R^{2d}}  \hat{K}^\epsilon_\ell(x_1- x_2) m^\epsilon_\ell( x \, | x_1, x_2)  \rho^\epsilon_{i}(x_1, \tau) \rho^\epsilon_{i}(x_2, \tau)   \, dx_1\,dx_2- \kappa_{\ell} \rho_{i}(x, \tau) \rho_{i}(x, \tau)}_{L^\infty}.\nonumber\\
\end{equation*}
By choosing $k_\ell = 2\kappa_\ell$ and following the same arguments as the previous case, we shall obtain
\begin{equation*}
\Theta^{\ell, j}(\tau)
\leq 2C\paren{\max_{j = 1, \cdots, J}||\rho_j(x, \tau) - \rho_j^\epsilon(x, \tau) ||_{L^\infty} + ||\rho_i^\epsilon(x, \tau)\rho_k^\epsilon(x, \tau)  ||_{C^2_b(\R^d)}O(\epsilon^2)}.
\end{equation*}

\item When $\mathscr{R}_\ell$ is of the type $S_i \rightarrow S_j + S_k$, by plugging in  \cref{Assume:measureOne2Two} and  \cref{Assume:kernelOne}, using symmetry and choosing $k_\ell = \kappa_\ell$ we obtain
\begin{align}
\Theta^{\ell, j}(\tau) &= \beta_{\ell j} \norm{  \paren{   \int_{\R^d}  \hat{K}^\epsilon_\ell(z) \left( \int_{\R^d}  m^\epsilon_\ell(x, y | \,z) \,d y \right)  \rho^\epsilon_{i}(z, \tau) \, dz }- \kappa_{\ell} \rho_{i}(x, \tau)}_{L^\infty}\nonumber\\
&
= \beta_{\ell j}\kappa_\ell \norm{  \paren{   \int_{\R^d}  \left( \int_{\R^d}  \rho^\epsilon(|x-y|) \sum_{j=1}^{J}q_j\times\delta\left(z-(\beta_j x +(1-\beta_j)y)\right) dy  \right)  \rho^\epsilon_{i}(z, \tau) \, dz }-   \rho_{i}(x, \tau)}_{L^\infty}\nonumber\\
&
 =  \beta_{\ell j}\kappa_\ell \norm{\sum_{j=1}^{J}q_j\times\int_{\R^d}\rho^\epsilon(|x-y|) \rho_i^\epsilon(\beta_j x +(1-\beta_j)y, \tau)\, dy - \rho_i(x, \tau)}_{L^\infty}\nonumber\\
&
 \leq  \beta_{\ell j}\kappa_\ell \norm{\sum_{j=1}^{J}q_j\times\int_{\R^d}\rho^\epsilon(|x-y|) \left(\rho_i^\epsilon(\beta_j x +(1-\beta_j)y, \tau)- \rho_i(\beta_j x +(1-\beta_j)y, \tau)\right)\, dy  }_{L^\infty}\nonumber\\
 &
 \qquad+ \beta_{\ell j}\kappa_\ell \norm{\sum_{j=1}^{J}q_j\times\int_{\R^d}\rho^\epsilon(|x-y|) \rho_i(\beta_j x +(1-\beta_j)y, \tau)\, dy - \rho_i(x, \tau)}_{L^\infty}\nonumber\\
&
\leq \beta_{\ell j}\kappa_\ell \paren{ ||\rho_i^\epsilon(x, \tau)- \rho_i(x, \tau)||_{L^\infty}+||\rho_i(x, \tau) ||_{C^2_b(\R^d)}O(\epsilon^2)}
\end{align}
where the last term here again uses \cref{lem:molifier} and the regularity of solutions in  \cref{thm:regularityPDE} and  \cref{thm:regularityPIDE}.

\item When $\mathscr{R}_\ell$ is of the type $S_i + S_k \rightarrow S_j + S_r$, $i\not= k $, by plugging in  \cref{Assume:measureTwo2Two} and \cref{Assume:kernelTwo}, using symmetry and choosing $k_\ell = \kappa_\ell$ we obtain
\begin{align}\label{eq:estimates2_ABCD}
\Theta^{\ell, j}(\tau) &= \beta_{\ell j} \norm{ \paren{  \int_{\R^{2d}}  K^\epsilon_\ell(z, w) \left( \int_{\R^d}  m^\epsilon_\ell(x, y | z, w) \,dy \right)  \rho^\epsilon_{i}(z, \tau)  \rho^\epsilon_{k}(w, \tau) \, dz\, dw} - \kappa_{\ell}\rho_{i}(x, \tau)  \rho_{k}(x, \tau)}_{L^\infty}\nonumber\\
&
=  \beta_{\ell j} p \, \norm{  \paren{  \int_{\R^d}  K^\epsilon_\ell(x, y)  \rho^\epsilon_{i}(x, \tau)  \rho^\epsilon_{k}(y, \tau) \, dy} - \kappa_{\ell}\rho_{i}(x, \tau)  \rho_{k}(x, \tau)}_{L^\infty}\nonumber\\
&
\qquad +  \beta_{\ell j} (1-p) \norm{ \paren{  \int_{\R^d}  K^\epsilon_\ell(y, x)  \rho^\epsilon_{i}(y, \tau)  \rho^\epsilon_{k}(x, \tau) \, dy} - \kappa_{\ell}\rho_{i}(x, \tau)  \rho_{k}(x, \tau)}_{L^\infty}\nonumber\\
&
\leq \beta_{\ell j}2C\paren{\sup_{i = 1, \cdots, J}||\rho^\epsilon_i(x, \tau) ||_{C^2_b(\R^d)}O(\epsilon^2) + \max_{j = 1, \cdots, J}||\rho^\epsilon_{j}(x, \tau) - \rho_{j}(x, \tau)||_{L^\infty} },
\end{align}
where the last inequality comes from  \cref{eq:estimates1_AB}.

\item When $\mathscr{R}_\ell$ is of the type $S_i + S_i \rightarrow S_j + S_r$,  by  choosing $k_\ell = 2\kappa_\ell$ instead, we obtain the same estimates as \cref{eq:estimates2_ABCD}.
\end{enumerate}
This concludes the proof of the lemma.
\end{proof}

\section{Lemmas for Estimating Derivatives of the Nonlinear Term of MFM}\label{A:appendixLemDerivative}

\begin{lemma}\label{lem:diffNonlinearPIDE1}
For $\mathscr{R}_\ell$ to be a first or second order reactions,  $$|\partial_{x_i} \int_{\tilde{\vec{x}} \in \mathbb{X}^{(\ell)}}   \delta_{x}(\tilde{x}_1^{(j)})  K_\ell(\tilde{\vec{x}}) \, \left( \Pi_{k = 1}^{J} \Pi_{s = 1}^{\alpha_{\ell k}}  \rho_{k}(\tilde{x}_{s}^{(k)}, t)\right) \, d\tilde{\vec{x}}|\leq k_\ell \paren{2C_2\vee 1} \sup_{k = 1, \cdots, J}||\partial_{x_i} \rho_{k}(x, t)||_{C_b(\R^d)}.$$
\end{lemma}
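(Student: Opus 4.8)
The plan is to use the $\delta$-function to collapse one position integral, reducing the quantity to an explicit low-order expression in the $\rho_k$'s, and then to differentiate by the product rule, taking care to move the $x$-derivative off the (possibly singular) kernel and onto a factor $\rho_k$. First I would split into the allowable reaction types from \cref{Ass:MainAssumptions} in which $S_j$ occurs as a reactant. Evaluating $\int_{\tilde{\vec{x}}\in\mathbb{X}^{(\ell)}}\delta_x(\tilde{x}_1^{(j)})K_\ell(\tilde{\vec{x}})\paren{\Pi_{k}\Pi_{s}\rho_k(\tilde{x}_s^{(k)},t)}\,d\tilde{\vec{x}}$ by setting $\tilde{x}_1^{(j)}=x$ yields three cases: (a) a first-order reaction $S_j\to\cdots$, where $K_\ell\equiv k_\ell$ by \cref{Assume:kernelOne} and the integral equals $k_\ell\,\rho_j(x,t)$; (b) a second-order reaction $S_j+S_k\to\cdots$ with $k\neq j$, where $K_\ell=K^\epsilon_\ell$ depends only on separation by \cref{Assume:kernelTwo} and the integral equals $\rho_j(x,t)\,(K^\epsilon_\ell*\rho_k)(x,t)$; and (c) a second-order reaction $S_j+S_j\to\cdots$, which gives $\rho_j(x,t)\,(K^\epsilon_\ell*\rho_j)(x,t)$. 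In each bimolecular case I record $||K^\epsilon_\ell||_{L^1}=k_\ell$.

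For the differentiation, case (a) is immediate: $|\partial_{x_i}(k_\ell\rho_j)|\leq k_\ell||\partial_{x_i}\rho_j||_{C_b(\R^d)}$, which already gives the bound since $2C_2\vee 1\geq 1$. For cases (b) and (c), writing $g(x)=(K^\epsilon_\ell*\rho_k)(x,t)$, the product rule gives $\partial_{x_i}(\rho_j g)=(\partial_{x_i}\rho_j)\,g+\rho_j\,(\partial_{x_i}g)$. The one point requiring care is $\partial_{x_i}g$: after the change of variables $w=x-x_2$ one has $g(x)=\int_{\R^d}K^\epsilon_\ell(w)\rho_k(x-w,t)\,dw$, so that $\partial_{x_i}g=K^\epsilon_\ell*\partial_{x_i}\rho_k$, i.e. the derivative lands on $\rho_k$ and no differentiability of $K^\epsilon_\ell$ is needed (important, since e.g. the Doi kernel is discontinuous). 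Differentiation under the integral is justified by dominated convergence using integrability of $K^\epsilon_\ell$ together with the uniform bound on $\partial_{x_i}\rho_k$ supplied by \textbf{(Inequality 1)} in \cref{thm:regularityPIDE}.

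Finally I would close the estimate with Young's inequality: $|g|\leq ||K^\epsilon_\ell||_{L^1}\,||\rho_k||_{C_b(\R^d)}\leq k_\ell C_2$ and $|\partial_{x_i}g|\leq k_\ell\,||\partial_{x_i}\rho_k||_{C_b(\R^d)}$, while $||\rho_j||_{C_b(\R^d)}\leq C_2$. Hence $|\partial_{x_i}(\rho_j g)|\leq k_\ell C_2||\partial_{x_i}\rho_j||_{C_b(\R^d)}+k_\ell C_2||\partial_{x_i}\rho_k||_{C_b(\R^d)}\leq 2k_\ell C_2\sup_{k}||\partial_{x_i}\rho_k||_{C_b(\R^d)}$, and since $2C_2\leq 2C_2\vee 1$ this is dominated by $k_\ell(2C_2\vee 1)\sup_{k}||\partial_{x_i}\rho_k||_{C_b(\R^d)}$, as claimed. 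I expect the only genuine subtlety to be this transfer of the $x$-derivative onto $\rho_k$ through the convolution structure, so that the argument never assumes regularity of the kernel; the remaining steps are just the product rule, Young's inequality, and the uniform $C^1_b$ bounds already established.
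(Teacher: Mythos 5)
Your proposal is correct and follows essentially the same route as the paper's proof: collapse the $\delta$-function, treat the first-order case trivially, and in the bimolecular case write the expression as $\rho_j(x,t)\,(K^\epsilon_\ell*\rho_k)(x,t)$, apply the product rule, and shift the derivative onto $\rho_k$ via the change of variables in the convolution before closing with $\norm{K^\epsilon_\ell}_{L^1}=k_\ell$ and the $C_2$ bound. Your added remarks (dominated convergence to justify differentiating under the integral, and the explicit note that no kernel regularity is used, which matters for the Doi kernel) are sound refinements of what the paper leaves implicit.
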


\begin{proof}
When $\mathscr{R}_\ell$ is first order of the type $S_j \to \cdots$,
\begin{align*}
|\partial_{x_i} \int_{\tilde{\vec{x}} \in \mathbb{X}^{(\ell)}}   \delta_{x}(\tilde{x}_1^{(j)})  K_\ell(\tilde{\vec{x}}) \, \left( \Pi_{k = 1}^{J} \Pi_{s = 1}^{\alpha_{\ell k}}  \rho_{k}(\tilde{x}_{s}^{(k)}, t)\right) \, d\tilde{\vec{x}}| &= k_\ell |\partial_{x_i}    \rho_{j}(x, t)|,\\
& \leq k_\ell \sup_{k = 1, \cdots, J}||\partial_{x_i} \rho_{k}(x, t)||_{C_b(\R^d)}.
\end{align*}
When $\mathscr{R}_\ell$ is second order of the type $S_k + S_j\to\cdots$,
\begin{align*}
&|\partial_{x_i} \int_{\tilde{\vec{x}} \in \mathbb{X}^{(\ell)}}   \delta_{x}(\tilde{x}_1^{(j)})  K_\ell(\tilde{\vec{x}}) \, \left( \Pi_{k = 1}^{J} \Pi_{s = 1}^{\alpha_{\ell k}}  \rho_{k}(\tilde{x}_{s}^{(k)}, t)\right) \, d\tilde{\vec{x}}| \\
&
= |\partial_{x_i} \paren{\int_{\R^d} \hat{K}_\ell(x - y) \, \rho_{k}(y, t) \, dy \times \rho_j(x, t)}| ,\\
&
= |\paren{\int_{\R^d} \hat{K}_\ell(y) \, \partial_{x_i}\rho_{k}(x-y, t) \, dy \times \rho_j(x, t)}| +  | \paren{\int_{\R^d} \hat{K}_\ell(y) \, \rho_{k}(x-y, t) \, dy \times \partial_{x_i} \rho_j(x, t)}|,\\
&
 \leq 2k_\ell C_2\sup_{k = 1, \cdots, J}||\partial_{x_i} \rho_{k}(x, t)||_{C_b(\R^d)}.
\end{align*}
\end{proof}

\begin{lemma}\label{lem:diffNonlinearPIDE2}
For $\mathscr{R}_\ell$ to be a reaction producing one or two species,
\begin{align*}
&|\partial_{x_m} \int_{\tilde{\vec{x}} \in\mathbb{X}^{(\ell)}}  K_\ell(\tilde{\vec{x}}) \left( \int_{\vy \in \mathbb{Y}^{(\ell)}}   \delta_{x}(y_1^{(j)}) m_\ell(\vec{y}\, | \,\tilde{\vec{x}}) \,d \vec{y} \right) \left( \Pi_{k = 1}^J \Pi_{s = 1}^{\alpha_{\ell k}}  \rho_{k}(\tilde{x}_s^{(k)}, t)\right) \, d\tilde{\vec{x}}|\\
& \leq k_\ell\paren{ 2C_2 \vee 1} \sup_{k = 1, \cdots, J}||\partial_{x_m} \rho_{k}(x, t)||_{C_b(\R^d)}.
 \end{align*}
\end{lemma}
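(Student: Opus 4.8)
The plan is to prove the bound by a case analysis over the allowable product-generating reaction types, mirroring the treatment of \cref{lem:diffNonlinearPIDE1} but now exploiting the explicit forms of the placement measures in \cref{Assume:measureOne2One,Assume:measureTwo2One,Assume:measureTwo2Two,Assume:measureOne2Two}. In each case I would first carry out the inner $\vec{y}$-integration against $\delta_{x}(y_1^{(j)})\,m_\ell(\vec{y}\,|\,\tilde{\vec{x}})$. Since every placement measure is a finite convex combination of $\delta$-masses supported on affine images of the reactant positions, this integration collapses the production term into an explicit convolution-type integral over the reactant space $\mathbb{X}^{(\ell)}$ involving at most two factors of $\rho_k$, exactly as was done in \cref{lem:estimates2}.

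The crucial structural step, to be performed next in each case, is a change of variables in the reactant integral chosen so that the observation point $x$ enters \emph{only} through the arguments of the smooth fields $\rho_k$, and not through the kernel $K_\ell$ (nor, in the dissociation case $S_i \to S_j + S_k$, through the separation density $\rho^\epsilon$ of \cref{Assume:measureOne2Two}). For $S_i + S_k \to S_j$, integrating out a reactant against the $\delta$ and then substituting $w$ for the weighted separation turns the term into $\sum_n p_n \int_{\R^d} K_\ell(w)\,\rho_i(x + (1-\alpha_n) w)\,\rho_k(x - \alpha_n w)\, dw$; for $S_i \to S_j + S_k$, the substitution $u = x - y$ gives $k_\ell \sum_n p_n \int_{\R^d} \rho^\epsilon(|u|)\,\rho_i(x - (1-\alpha_n) u)\, du$; the two-product case $S_i + S_k \to S_j + S_r$ reduces to terms of the form $\rho_a(x)\,(K_\ell * \rho_b)(x)$. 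This is the heart of the argument: once $K_\ell$ and $\rho^\epsilon$ are $x$-independent, I may differentiate under the integral sign and $\partial_{x_m}$ lands entirely on the $\rho_k$ factors (for the convolutional term $K_\ell * \rho_b$ one instead moves the derivative onto $\rho_b$).

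With the derivative transferred, the estimate closes by the product rule together with $\|K_\ell\|_{L^1} = k_\ell$ (respectively $\|\rho^\epsilon\|_{L^1} = 1$) and $\sup_k \|\rho_k\|_\infty \le C_2$. For a reaction with two reactant factors the product rule yields two terms, in each of which one factor carries the derivative, bounded by $\sup_k\|\partial_{x_m}\rho_k\|_{C_b(\R^d)}$, and the other is bounded by $C_2$; summing over the finitely many placement weights with $\sum_n p_n = 1$ produces the prefactor $2C_2$. For reactions with a single reactant factor (e.g.\ $S_i \to S_j$ or $S_i \to S_j + S_k$) only one term survives and the prefactor is $1$. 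Taking the maximum of the two situations yields the claimed constant $k_\ell\,(2C_2 \vee 1)$ multiplying $\sup_k \|\partial_{x_m}\rho_k\|_{C_b(\R^d)}$.

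The main obstacle I anticipate is purely the bookkeeping in the cases where the placement $\delta$ is composed with the affine map $z \mapsto \alpha_n x_1 + (1-\alpha_n) x_2$: one must track the Jacobian $\alpha_n^{-d}$ generated when integrating out a reactant variable and verify that it cancels against the Jacobian of the subsequent substitution, so that no unbounded $\alpha_n$-dependent constant survives and the argument of $K_\ell$ reduces to a clean multiple of the separation. The degenerate weight $\alpha_n = 0$ (product placed exactly at one reactant) must be treated separately, as in \cref{lem:estimates2}; there the relevant factor reduces directly to a convolution of $K_\ell$ against a single $\rho_k$ times the other field evaluated at $x$, which is in fact simpler and yields the same bound. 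Beyond this, every estimate reduces to the triangle and Young inequalities already employed in \cref{lem:reactionPIDE,lem:diffNonlinearPIDE1}.
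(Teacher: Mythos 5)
Your proposal is correct and follows essentially the same route as the paper's proof: a case-by-case analysis over the product-generating reaction types, integrating out the placement $\delta$-masses, then changing variables so that $x$ appears only in the arguments of the $\rho_k$'s (with the $\alpha_n^{-d}$ Jacobians cancelling exactly as you anticipate), and finally differentiating under the integral and applying the product rule with $\|K_\ell\|_{L^1}=k_\ell$, $\|\rho^\epsilon\|_{L^1}=1$, and $\|\rho_k\|_\infty \le C_2$ to obtain the prefactor $k_\ell(2C_2\vee 1)$. The paper handles your "degenerate" $\alpha_n=0$ concern simply by assuming $\alpha_i>0$ without loss of generality, which is consistent with your observation that this case is even easier.
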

\begin{proof}
We discuss this case by case for the different types of reactions.
\begin{enumerate}[wide, labelwidth=!, labelindent=0pt]
\item When $\mathscr{R}_\ell$ is of the type $S_k \to S_j$,
\begin{align*}
&|\partial_{x_m} \int_{\tilde{\vec{x}} \in\mathbb{X}^{(\ell)}}  K_\ell(\tilde{\vec{x}}) \left( \int_{\vy \in \mathbb{Y}^{(\ell)}}   \delta_{x}(y_1^{(j)}) m_\ell(\vec{y}\, | \,\tilde{\vec{x}}) \,d \vec{y} \right) \left( \Pi_{k = 1}^J \Pi_{s = 1}^{\alpha_{\ell k}}  \rho_{k}(\tilde{x}_s^{(k)}, t)\right) \, d\tilde{\vec{x}}|\\
& = k_\ell |\partial_{x_m}  \rho_k(x, t)| \leq k_\ell \sup_{k = 1, \cdots, J}||\partial_{x_m} \rho_{k}(x, t)||_{C_b(\R^d)}.
 \end{align*}

\item When $\mathscr{R}_\ell$ is of the type $S_k + S_r \to S_j$, without loss of generality, let us assume $\alpha_i > 0$ for all $i = 1, \cdots, I$. Then,
\begin{align*}
&|\partial_{x_m} \int_{\tilde{\vec{x}} \in\mathbb{X}^{(\ell)}}  K_\ell(\tilde{\vec{x}}) \left( \int_{\vy \in \mathbb{Y}^{(\ell)}}   \delta_{x}(y_1^{(j)}) m_\ell(\vec{y}\, | \,\tilde{\vec{x}}) \,d \vec{y} \right) \left( \Pi_{k = 1}^J \Pi_{s = 1}^{\alpha_{\ell k}}  \rho_{k}(\tilde{x}_s^{(k)}, t)\right) \, d\tilde{\vec{x}}|\\
&
 = |\partial_{x_m} \int_{\R^{2d}} \hat{K}_\ell(y- z)    m_\ell(x |y, z) \rho_{k}(y, t)  \rho_r(z, t)\, dy\, dz|.\\
&
= |\partial_{x_m} \int_{\R^{2d}} \hat{K}_\ell(y- z)  \sum_{i=1}^{I}p_i \times \delta\left(x-(\alpha_i y+(1-\alpha_i)z)\right) \rho_{k}(y, t) \rho_r(z, t)\, dy\, dz|.\\
&
=|\sum_{i=1}^{I}p_i \times\partial_{x_m} \int_{\R^{d}} \frac{1}{\alpha_i^d}\hat{K}_\ell\paren{\frac{x-z}{\alpha_i}}\rho_{k}\paren{\frac{x - (1-\alpha_i) z}{\alpha_i}, t}  \rho_r(z, t)\,  dz|, \\
&
=|\sum_{i=1}^{I}p_i \times \int_{\R^{d}} \hat{K}_\ell(w) \partial_{x_m}\paren{ \rho_{k}\paren{x + (1 - \alpha_i)w, t} \rho_r(x - \alpha_i w, t) } \,  dw|\\
&
\leq k_\ell\times 2\sup_{k = 1, \cdots, J}||\partial_{x_m} \rho_{k}(x, t)||_{C_b(\R^d)}||\vecrho||_M
\leq 2k_\ell C_2\sup_{k = 1, \cdots, J}||\partial_{x_m} \rho_{k}(x, t)||_{C_b(\R^d)}.
 \end{align*}

\item When $\mathscr{R}_\ell$ is of the type $S_k + S_r \to S_j + S_i$,
\begin{align*}
&|\partial_{x_m} \int_{\tilde{\vec{x}} \in\mathbb{X}^{(\ell)}}  K_\ell(\tilde{\vec{x}}) \left( \int_{\vy \in \mathbb{Y}^{(\ell)}}   \delta_{x}(y_1^{(j)}) m_\ell(\vec{y}\, | \,\tilde{\vec{x}}) \,d \vec{y} \right) \left( \Pi_{k = 1}^J \Pi_{s = 1}^{\alpha_{\ell k}}  \rho_{k}(\tilde{x}_s^{(k)}, t)\right) \, d\tilde{\vec{x}}|\\
&
=|\partial_{x_m} \int_{\R^{2d}}  K_\ell(z, w) \left( \int_{\R^d}   m_\ell(x, y\, | \, z, w) \,dy \right)   \rho_{k}(z, t) \rho_{r}(w, t)\, dz\, dw|\\
&
\leq p\times|\partial_{x_m} \int_{\R^{d}}  \hat{K}_\ell(x - y)    \rho_{k}(x, t) \rho_{r}(y, t)\, dy| + (1-p)\times|\partial_{x_m} \int_{\R^{d}}  \hat{K}_\ell(y - x)    \rho_{k}(y, t) \rho_{r}(x, t)\, dy| \\
&
= p\times|\int_{\R^{d}}  \hat{K}_\ell(y)   \partial_{x_m} \paren{ \rho_{k}(x, t) \rho_{r}(x-y, t)}\, dy| + (1-p)\times|\int_{\R^{d}}  \hat{K}_\ell(-y)   \partial_{x_m} \paren{ \rho_{k}(x-y, t) \rho_{r}(x, t)}\, dy| \\
&
\leq k_\ell\times 2\sup_{k = 1, \cdots, J}||\partial_{x_m} \rho_{k}(x, t)||_{C_b(\R^d)}||\vecrho||_M
\leq 2k_\ell C_2\sup_{k = 1, \cdots, J}||\partial_{x_m} \rho_{k}(x, t)||_{C_b(\R^d)}.
\end{align*}

\item When $\mathscr{R}_\ell$ is of the type $S_k \to S_j + S_r$, without loss of generality, let us assume $\alpha_i > 0$ for all $i = 1, \cdots, I$. Then,
\begin{align*}
&|\partial_{x_m} \int_{\tilde{\vec{x}} \in\mathbb{X}^{(\ell)}}  K_\ell(\tilde{\vec{x}}) \left( \int_{\vy \in \mathbb{Y}^{(\ell)}}   \delta_{x}(y_1^{(j)}) m_\ell(\vec{y}\, | \,\tilde{\vec{x}}) \,d \vec{y} \right) \left( \Pi_{k = 1}^J \Pi_{s = 1}^{\alpha_{\ell k}}  \rho_{k}(\tilde{x}_s^{(k)}, t)\right) \, d\tilde{\vec{x}}|\\
&
=|\partial_{x_m} \int_{\R^{d}}  K_\ell(z) \left( \int_{\R^d}   m_\ell(x, y\, | \, z) \,dy \right)   \rho_{k}(z, t) \, dz |\\
&
=|\partial_{x_m} \int_{\R^{d}}  K_\ell(z) \left( \int_{\R^d}   \rho(|x-y|) \sum_{i=1}^{I}p_i\times\delta\left(z-(\alpha_i x +(1-\alpha_i)y)\right)\,dy \right)   \rho_{k}(z, t) \, dz |\\
&
=| \sum_{i=1}^{I}p_ik_\ell\times\partial_{x_m} \int_{\R^{d}} \rho(|x-y|)\rho_{k}(\alpha_i x +(1-\alpha_i)y, t) \, dy |\\
&
=| \sum_{i=1}^{I}p_ik_\ell\times\int_{\R^{d}} \rho(|w|) \partial_{x_m} \rho_{k}(x -(1-\alpha_i)w, t) \, dw |
\leq k_\ell \sup_{k = 1, \cdots, J}||\partial_{x_m} \rho_{k}(x, t)||_{C_b(\R^d)}.
\end{align*}
\end{enumerate}
This concludes the proof of the lemma.

\end{proof}


\end{appendices}


\end{document}